\newtheorem{theorem}{Theorem}[section]
\newtheorem*{theorem*}{Theorem}
\newtheorem{corollary}{Corollary}[section]
\newtheorem*{corollary*}{Corollary}
\newtheorem{lemma}{Lemma}[section]
\theoremstyle{definition}
\newtheorem{remark}{Remark}[section]
\newtheorem{example}{Example}[section]
\newcommand{\C}{\mathbb C}
\newcommand{\R}{\mathbb R}
\newcommand{\calC}{\mathcal C}
\newcommand{\calL}{\mathcal L}
\newcommand{\dvoll}{d\text{vol}_{\hat{g}}}
\begin{document}

\title[Positivity from $ X \times \C $ to $ X $]{On the Rosenberg-Stolz Conjecture for $ X \times \R^{2} $ and Its Application in Complex Geometry}
\author{Jie Xu}
\address{Department of Mathematics, Northeastern University, Boston, MA, USA}
\email{jie.xu@northeastern.edu}

\begin{abstract}
Let $ X $ be an oriented, closed manifold with $ \dim X \geqslant 2 $. In this article, we give both Riemannian geoemtry and complex geometry results on (sub)manifolds of the type $ X \times \C^{k} $ or $ X \times \R^{k} $. For Riemannian geometry side, we show that if $ X \times \C = X \times \R^{2} $ admits a Riemannian metric $ g $ with uniformly positive scalar curvature and bounded curvature, such that some novel conformally invariant $ g $-angle condition is satisfied, then there exists a metric $ \tilde{g} $ conformal to $ g $ such that $ \tilde{g} |_{X} $ has positive scalar curvature. This Riemannian path implies a complex geometry result: we show that if the complex manifold $ X \times \C $ admits a Hermitian metric $ \omega $ whose associated Riemannian metric $ g $ has uniformly positive scalar curvature and is of bounded curvature, then $ X \times \C $ admits a Hermitian metric $ \tilde{\omega} $ with positive Chern scalar curvature, provided that some $ g $-angle condition is satisfied. The Riemannian geometry result partially answers a 1994 Rosenberg-Stolz conjecture \cite{RosSto}. The complex geometry result extends a result of X.K. Yang \cite{Yang} from compact Hermitian manifolds to noncompact Hermitian manifolds of type $ X \times \C $. We further generalize both the Riemannian and complex geometry results to $ X \times \R^{k} $ or $ X \times \C^{k} $ for any $ k \geqslant 1 $ by imposing a generalized conformally invariant angle condition.
\end{abstract}
\maketitle

\section{Introduction}
Let $ X $ be an oriented, closed manifold with $ \dim X \geqslant 2 $. In a 1994 survey article, Rosenberg and Stolz made the following conjecture \cite[Conjecture 7.1(2)]{RosSto}:
\medskip

{\it{Conjecture: If $ X $ admits no metrics of positive scalar curvature, then $ X \times \R^{2} $ admits no complete metrics of uniformly positive scalar curvature.}}
\medskip

This conjecture holds in many cases by imposing some topological significances, see e.g. \cite{JR}. Very recently, Cecchini, R\"ade and Zeidler \cite{CRZ} were able to verify this conjecture when $ X $ is oriented, $ 2\leqslant \dim_{\R}X \leqslant 5 $ and $ \dim_{\R}X \neq 4 $ by a $ \mu $-bubble method. In this article, we show by contrapositive statement that if for any closed, oriented manifold $ X $ with $ \dim_{\R}X \geqslant 2 $, the noncompact manifold $ X \times \R^{2} $ admits a complete metric $ g $ with uniformly positive scalar curvature, then $ X $ admits a positive scalar curvature in the same conformal class of the originally induced metric, provided that some novel conformally invariant angle condition and the geometry near ``infinity" are imposed. We also give an example, which states that if $ X $ has no PSC metric, and $ X \times \R^{2} $ admits a complete metric with positive scalar curvature and ``good" geometry near ``infinity", then the angle condition fails. It indicates that our angle condition is reasonable.
\medskip

Our conformal geometry approach allows us to apply the result of Rosenberg-Stolz conjecture for $ X \times \R^{2} $ to complex geometry. A 2020 result of X. K. Yang \cite[Corollary 3.9]{Yang} states that if $ (X, \omega, J) $ is a Hermitian manifold with Hermitian metric $ \omega $ and a fixed almost complex structure $ J $ whose background Riemannian metric $ g(J\cdot, \cdot) = \omega(\cdot, \cdot) $ has quasi-positive scalar curvature, then there exists a Hermitian metric $ \tilde{\omega} $ on $ X $ with positive Chern scalar curvature.

In this article, we also answer the following question, which is a generalization of X.K. Yang's results \cite{Yang}, \cite{Yang2} to noncompact Hermitian manifolds of cylindrical type $ X \times \C $:
\medskip

{\it{If $ (X \times \C, \omega, J) $ is a noncompact complex manifold with complete Hermitian metric $ \omega $ such that the background Riemannian metric  $ g(J\cdot, \cdot) = \omega(\cdot, \cdot) $ has uniformly positive scalar curvature, does $ X \times \C $ admits some complete Hermitian metric $ \tilde{\omega} $ with positive Chern scalar curvature?}}
\medskip

It is a natural first step from compact spaces to noncompact Hermitian manifold. Roughly speaking, (i) the existence of complete metrics with positive Riemannian/Chern scalar curvature on $ X \times \C $ is definitely related to the existence of positive Riemannian/Chern scalar curvature metrics on $ X $; (ii) $ X \times \C $ is a local model of every complex line bundle over the base space $ X $ (here $ X $ might be open subsets of the base manifold), which may build a pavement for the problem of transposing the positivity of Chern scalar curvatures from the total space of a complex line bundle to the base space.

The first reasoning (i) above can be observed from a Riemannian point of view. Topologically $ X \times \C \cong X \times \R^{2} $. It is clear that the Riemannian metric with uniformly positive scalar curvature on $ X \times \C $ is related to the positive Riemannian scalar curvature metric on $ X $, due to the 1994 Rosenberg-Stolz conjecture \cite[Conjecture 7.1]{RosSto}. This conjecture suggests a Riemannian path to solve the complex geometry problem: 

Step I: By assuming the uniform positivity of the Riemannian scalar curvature of the associated Riemannian metric $ g $ of $ \omega $, we address the following question: {\it{If $ X \times \R^{2} $ admits a Riemannian metric with uniformly positive scalar curvature, does $ X $ admit a Riemannian metric with positive Riemannian scalar curvature?}}

Step II: Once a Riemannian metric on $ X $ with positive Riemannian scalar curvature is found from Step I, we would like to know {\it{whether such a Riemannian metric is compatible with some Hermitian structure on $ X $?}}

Step III: If we get affirmative answers for both Steps I and II, we obtain a Hermitian metric $ \tilde{\omega} $ on $ X $ with positive Chern scalar curvature by X. K. Yang \cite{Yang}, \cite{Yang2}. Therefore we get a Hermitian metric with positive Chern scalar curvature via the product metric of $ \tilde{\omega} $ and standard Euclidean metric of $ \C $.

According to our Riemannian path, there are topological obstructions given in e.g. \cite{CRZ}, \cite{HPS}, \cite{JR}, \cite{RosSto}, \cite{Zeilder2}, therefore we cannot expect an affirmative answer without any further restriction. However, instead of topological restrictions, we impose a conformally invariant geometric condition, which applies to all dimensions. In addition, we need to impose geometric conditions near ``infinity". For any complete Riemannian manifold $ (N, \hat{g}) $, we say that $ \hat{g} $ is of {\it{bounded curvature}} in the sense of Aubin \cite[Chapter 2]{Aubin} if $ \hat{g} $ is a complete Riemannian metric, having positive injectivity radius, and having uniformly bounded sectional curvature.

To state the main results, we need some notation. Our motivation for this analytic approach is given in \S2. Let $ n - 1 : = \dim_{\R} X $. Denote by $ X \times \C \cong X \times \R^{2} \cong X \times \R_{\xi} \times \R_{\zeta} $, i.e. the two real lines are assigned globally $ \xi $- and $ \zeta $-variables, respectively. It follows that we have global vector fields $ \partial_{\xi} \in \Gamma(T\R_{\xi}) $ and $ \partial_{\zeta} \in \Gamma (T\R_{\zeta}) $, respectively. Let $ g $ be the Riemannian metric on $ X \times \R^{2} $. Denote the projection maps and associated induced metrics by
\begin{equation}\label{intro:eqn1}
\begin{tikzcd}
    (X \times \R_{\xi} \times \R_{\zeta}, g) \arrow[r, shift left = 1.5ex, "\pi_{\xi}"] & (X \times \R_{\zeta}, \imath_{\xi}^{*}g) \arrow[r, shift left = 1.5ex, "\pi_{\zeta}"] \arrow[l, "\imath_{\xi}"] & (X, \imath_{\zeta}^{*} \imath_{\xi}^{*}g) \arrow[l, "\imath_{\zeta}"].
\end{tikzcd}
\end{equation}
By (\ref{intro:eqn1}), we identify $ X \times \R_{\zeta} $ with $ X_{\xi, 0} : = X \times R_{\zeta} \times \lbrace 0 \rbrace_{\xi} $, and $ X $ with $ X_{\zeta, 0} : = X \times \lbrace 0 \rbrace_{\zeta} = X \times \lbrace 0 \rbrace_{\zeta} \times \lbrace 0 \rbrace_{\xi} $; we also denote by $ \nu_{g} $ and $ \nu_{\imath_{\xi}^{*}g} $ the unit normal vector field along $ X_{\xi, 0} $ and $ X_{\zeta, 0} $, respectively. In particular, we choose the direction such that $ g(\nu_{g}, \partial_{\xi}) \geqslant 0 $ and $ \imath_{\xi}^{*}g(\nu_{\imath_{\xi}^{*}g}, \partial_{\zeta}) \geqslant 0 $, respectively. Let $ R_{g}, R_{\imath_{\xi}^{*}g}, R_{\imath_{\zeta}^{*}\imath_{\xi}^{*}g} $ be the Riemannian scalar curvatures on $ X \times \R^{2}, X_{\xi, 0}, X_{\zeta, 0} $ respectively. Let $ \text{Ric}_{g} , \text{Ric}_{\imath_{\xi}^{*}g} $ be the Riemannian Ricci curvature tensors on $ X \times \R^{2}, X_{\xi, 0} $, respectively. Let $ A_{g}, h_{g}, A_{\imath_{\xi}^{*}g}, h_{\imath_{\xi}^{*}g} $ be the Riemannian second fundamental forms and Riemannian mean curvatures on $ X_{\xi, 0}, X_{\zeta, 0} $, respectively. 

Let $ (d\pi_{\xi})_{*} \nu_{g} $ be the projection of $ \nu_{g} \in \Gamma(T(X \times \R^{2})) $ onto $ X_{\xi, 0} $ via $ \pi_{\xi} $. Note that $ (d\pi_{\xi})_{*} \nu_{g} \in \Gamma(T(X \times \R_{\zeta})) $ is also a unit vector field. We define the $ \imath_{\xi}^{*}g $-angle between $ (d\pi_{\xi})_{*} \nu_{g} $ and $ \partial_{\zeta} $ along $ X_{\zeta, 0} $ by
\begin{equation*}
    \cos (\angle_{\imath_{\xi}^{*}g}(d\pi_{\xi})_{*} \nu_{g}, \partial_{\zeta})) : = \frac{\imath_{\xi}^{*}g((d\pi_{\xi})_{*} \nu_{g}, \partial_{\zeta})}{\imath_{\xi}^{*}g((d\pi_{\xi})_{*} \nu_{g}, (d\pi_{\xi})_{*} \nu_{g})^{\frac{1}{2}} \imath_{\xi}^{*}g(\partial_{\zeta}, \partial_{\zeta})^{\frac{1}{2}}} = \frac{\imath_{\xi}^{*}g((d\pi_{\xi})_{*} \nu_{g}, \partial_{\zeta})}{ \imath_{\xi}^{*}g(\partial_{\zeta}, \partial_{\zeta})^{\frac{1}{2}}} \in \left[0, \frac{\pi}{2} \right].
\end{equation*}
Similarly, we define the $ \imath_{\xi}^{*}g $-angle between $ \nu_{\imath_{\xi}^{*}g} $ and $ \partial_{\zeta} $ by
\begin{equation*}
    \cos (\angle_{\imath_{\xi}^{*}g}(\nu_{\imath_{\xi}^{*}g}, \partial_{\zeta})) : = \frac{\imath_{\xi}^{*}g(\nu_{\imath_{\xi}^{*}g}, \partial_{\zeta})}{\imath_{\xi}^{*}g(\nu_{\imath_{\xi}^{*}g}, \nu_{\imath_{\xi}^{*}g})^{\frac{1}{2}} \imath_{\xi}^{*}g(\partial_{\zeta}, \partial_{\zeta})^{\frac{1}{2}}} = \frac{\imath_{\xi}^{*}g(\nu_{\imath_{\xi}^{*}g}, \partial_{\zeta})}{ \imath_{\xi}^{*}g(\partial_{\zeta}, \partial_{\zeta})^{\frac{1}{2}}} \in \left[0, \frac{\pi}{2} \right].
\end{equation*}
Clearly both quantities are conformally invariant with respect to any conformal transformation in $ X \times \R^{2} $. Our main result in Riemannian geometry is:
\begin{theorem}\label{intro:thm1}
Let $ X $ be an closed, oriented Riemannian manifold with $ \dim_{\R}X \geqslant 2 $. Assume that $ (X \times \R^{2}, g) $ admits a complete Riemannian metric $ g $ that is of bounded curvature, and such that $ R_{g} \geqslant \kappa_{0} > 0 $ for some $ \kappa_{0} > 0 $. Assume
\begin{equation}\label{intro:eqn1a}
\begin{split}
 & \frac{n - 1}{n - 2} \sec^{2} (\angle_{\imath_{\xi}^{*}g}(d\pi_{\xi})_{*} \nu_{g}, \partial_{\zeta})) + \sec^{2} (\angle_{\imath_{\xi}^{*}g}(\nu_{\imath_{\xi}^{*}g}, \partial_{\zeta}))  \\
 & \qquad = \frac{n - 1}{n - 2} \frac{\imath_{\xi}^{*}g(\partial_{\zeta},\partial_{\zeta})}{\imath_{\xi}^{*}g((d\pi_{\xi})_{*}\nu_{g},\partial_{\zeta})^{2}} + \frac{\imath_{\xi}^{*}g(\partial_{\zeta},\partial_{\zeta})}{\imath_{\xi}^{*}g(\nu_{\imath_{\xi}^{*}g},\partial_{\zeta})^{2}}<
 2 + \frac{n - 1}{n - 2}
\end{split}
\end{equation}
on $ X \times \lbrace 0 \rbrace_{\xi} \times \lbrace 0 \rbrace_{\zeta} $ if $ (d\pi_{\xi})_{*}\nu_{g} $ is nowhere vanishing along $ X \times \lbrace 0 \rbrace_{\xi} \times \lbrace 0 \rbrace_{\zeta} $; otherwise assume
\begin{equation}\label{intro:eqn1b}
\sec^{2} (\angle_{\imath_{\xi}^{*}g}(\nu_{\imath_{\xi}^{*}g}, \partial_{\zeta}))   = \frac{\imath_{\xi}^{*}g(\partial_{\zeta},\partial_{\zeta})}{\imath_{\xi}^{*}g(\nu_{\imath_{\xi}^{*}g},\partial_{\zeta})^{2}} < 2 
\end{equation}
when $ (d\pi_{\xi})_{*}\nu_{g} \equiv 0 $ on $ X \times \lbrace 0 \rbrace_{\xi} \times \lbrace 0 \rbrace_{\zeta} $. Then there exists a Riemannian metric $ \tilde{g} $ in the conformal class of $ g $ such that $ R_{\imath_{\zeta}^{*}\imath_{\xi}^{2}\tilde{g}} > 0 $ on $ X $.
\end{theorem}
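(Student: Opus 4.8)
The plan is to implement the ``Riemannian path'' Steps I–III described in the introduction, but in reverse dependency order: first produce, on the intermediate slice $X_{\xi,0} = X \times \R_\zeta$, a conformal metric with positive scalar curvature by solving a Yamabe-type problem on a noncompact manifold, and then iterate the same construction descending from $X_{\xi,0}$ to $X$. Concretely, I would start from the Gauss-type codimension-one formula relating $R_{\imath_\xi^*g}$ on $X_{\xi,0}$ to $R_g$, $\mathrm{Ric}_g(\nu_g,\nu_g)$, $h_g$ and $|A_g|^2$ along $X_{\xi,0}$; the point of the hypothesis $R_g \geqslant \kappa_0 > 0$ together with bounded curvature (in the sense of Aubin) is that the induced metric $\imath_\xi^*g$ is itself complete and of bounded curvature, and its scalar curvature, while not necessarily positive, is bounded below and controlled by the geometry near infinity. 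On such a manifold one can run the Aubin-type subsolution/supersolution machinery for the conformal Laplacian $L_g = -\frac{4(n-1)}{n-2}\Delta_g + R_g$ to conformally deform $\imath_\xi^*g$ to a metric of \emph{positive} scalar curvature, provided the negative part of $R_{\imath_\xi^*g}$ is small in an appropriate integral/pointwise sense. This is exactly where the angle condition enters.

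The second step is to \emph{quantify} how the second fundamental form and mean curvature of $X_{\xi,0}$ inside $(X\times\R^2,g)$ are governed by the position of $\nu_g$ relative to the coordinate directions $\partial_\xi,\partial_\zeta$. Since $\partial_\xi$ and $\partial_\zeta$ are globally parallel with respect to the \emph{flat} product structure, the extrinsic curvature of the slices $\{\xi = \mathrm{const}\}$ and $\{\zeta = \mathrm{const}\}$ is entirely encoded in how the $g$-unit normals tilt away from these directions — that is, in $\cos(\angle_{\imath_\xi^*g}((d\pi_\xi)_*\nu_g,\partial_\zeta))$ and $\cos(\angle_{\imath_\xi^*g}(\nu_{\imath_\xi^*g},\partial_\zeta))$. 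The algebra of the Gauss equation, after two successive restrictions $X\times\R^2 \to X_{\xi,0} \to X_{\zeta,0}$, produces a lower bound for $R_{\imath_\zeta^*\imath_\xi^*g}$ of the schematic form
\begin{equation*}
R_{\imath_\zeta^*\imath_\xi^*g} \;\geqslant\; c\,\kappa_0 \;-\; C\Big(\tfrac{n-1}{n-2}\sec^2(\angle_{\imath_\xi^*g}((d\pi_\xi)_*\nu_g,\partial_\zeta)) + \sec^2(\angle_{\imath_\xi^*g}(\nu_{\imath_\xi^*g},\partial_\zeta)) - \big(2+\tfrac{n-1}{n-2}\big)\Big)\cdot(\cdots),
\end{equation*}
so that the strict inequality \eqref{intro:eqn1a} (resp.\ \eqref{intro:eqn1b} in the degenerate case $(d\pi_\xi)_*\nu_g\equiv 0$) forces the ``bad'' curvature terms to be dominated by $\kappa_0$ after the conformal rescaling; the weights $\frac{n-1}{n-2}$ are precisely the conformal-Laplacian weights appearing when one traces the Gauss equation, which is why they show up in the threshold $2+\frac{n-1}{n-2}$. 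I would first derive this inequality pointwise along $X_{\zeta,0}$, treating the two cases separately according to whether $(d\pi_\xi)_*\nu_g$ vanishes.

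The third step is the noncompact conformal analysis itself. Having reduced to: \emph{on the closed manifold $X$ (with the metric $\imath_\zeta^*\imath_\xi^*g$), the scalar curvature $R_{\imath_\zeta^*\imath_\xi^*g}$ is pointwise bounded below by a positive constant} — wait, this is the subtlety: the angle condition gives positivity of a \emph{lower bound function} only modulo the contribution of terms we have not yet controlled (the full $|A|^2$ and $\mathrm{Ric}$ terms). So in fact the honest route is: at the first restriction, conformally change $g$ on all of $X\times\R^2$ using Aubin's bounded-geometry Yamabe subsolution method so that the \emph{new} metric $\tilde g$ has $R_{\tilde g}>0$ \emph{and} its slice $X_{\xi,0}$ has small, controlled second fundamental form (here one exploits conformal invariance of the angle quantities, so the hypothesis is preserved under the deformation); then the induced metric on $X_{\xi,0}$ already has positive scalar curvature by the Gauss identity and the angle bound; and finally, since $X_{\xi,0}=X\times\R_\zeta$ is again a product of the same type with one fewer flat factor, a \emph{second} application — now the much easier compact-slice case, or a direct invocation of the $k=1$ step — yields $R_{\imath_\zeta^*\imath_\xi^*\tilde g}>0$ on $X$. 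I expect the \textbf{main obstacle} to be the noncompact conformal deformation in the first restriction: one must solve $L_g u = u^{\frac{n+2}{n-2}}$ (or the subcritical/flow analog) on the complete noncompact manifold $X\times\R^2$ with uniformly positive scalar curvature and bounded geometry, obtaining a solution $u$ bounded above and below away from $0$, so that $\tilde g = u^{4/(n-2)}g$ is complete with $R_{\tilde g}>0$ and with the \emph{extrinsic} geometry of $X_{\xi,0}$ simultaneously controlled — the interaction between the global analytic solvability (Aubin's method needs a positive-scalar-curvature subsolution, which $R_g\geqslant\kappa_0$ supplies) and the pointwise control of $\nabla u$ along the slice, needed to bound $h_{\tilde g}$ and $|A_{\tilde g}|$, is the delicate part and will occupy the bulk of the argument.
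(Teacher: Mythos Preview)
Your proposal has a genuine gap rooted in a misreading of what the angle condition actually controls. You write that the extrinsic curvature of the slices ``is entirely encoded in how the $g$-unit normals tilt away from these directions'' and propose a schematic lower bound of the form
\[
R_{\imath_\zeta^*\imath_\xi^*g} \;\geqslant\; c\,\kappa_0 \;-\; C\Big(\text{angle quantity} - \text{threshold}\Big)\cdot(\cdots).
\]
No such inequality holds. The angle condition \eqref{intro:eqn1a} involves only the \emph{direction} of $\nu_g$ and $\nu_{\imath_\xi^*g}$ relative to $\partial_\zeta$; it carries no information whatsoever about the \emph{magnitudes} of $\mathrm{Ric}_g(\nu_g,\nu_g)$, $\mathrm{Ric}_{\imath_\xi^*g}(\nu_{\imath_\xi^*g},\nu_{\imath_\xi^*g})$, $|A_g|$, $|A_{\imath_\xi^*g}|$, $h_g$, $h_{\imath_\xi^*g}$. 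These quantities can be arbitrarily large even when the angle condition is satisfied with room to spare (e.g.\ take a product metric warped slightly in the $X$ direction). Consequently your ``honest route'' --- conformally deform $g$ via a noncompact Yamabe equation so that $R_{\tilde g}>0$ and the slice has small second fundamental form --- cannot work as stated: you already have $R_g\geqslant\kappa_0>0$, so solving $L_gu=u^{(n+2)/(n-2)}$ buys you nothing new on the ambient space, and there is no mechanism by which the angle hypothesis forces $|A_{\tilde g}|$ or the normal Ricci to be small.

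The paper's argument is structurally different. One introduces an auxiliary circle $\mathbb{S}^1_t$ and works on the \emph{closed} manifold $W=X\times\mathbb{S}^1_t$. The angle condition is used \emph{only} to guarantee that a certain second-order operator $L=\frac{4(n-1)}{n-2}\nabla_{V_1}\nabla_{V_1}+4\nabla_{V_2}\nabla_{V_2}-4\Delta_{\sigma^*\bar g}+R_{\bar g}|_W$ on $W$ is uniformly elliptic (here $V_1,V_2$ are the tangential-to-$X$ parts of $(d\pi_\xi)_*\nu_g$ and $\nu_{\imath_\xi^*g}$); this is where the threshold $2+\frac{n-1}{n-2}$ comes from --- it is a Sylvester-criterion bound for positive-definiteness of the symbol, not a curvature estimate. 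One then solves $Lu=F$ on $W$ with $F$ a bump function that is \emph{large} (equal to $C+1$ with $C$ chosen to dominate all the uncontrolled curvature terms on the slice) but supported in a thin $t$-collar so that $\|F\|_{L^p}$ is tiny and hence $\|u\|_{C^{1,\alpha}}$ is tiny. Pulling $u_W=u+1$ back to $M$ as a $\xi,\zeta$-constant conformal factor and plugging into the conformal Gauss--Codazzi identity \eqref{intro:eqn3}, the second-order terms $\nabla_{\nu_g}\nabla_{\nu_g}\phi$ and $\nabla_{\nu_{\imath_\xi^*g}}\nabla_{\nu_{\imath_\xi^*g}}\phi$ reduce to $\nabla_{V_1}\nabla_{V_1}$ and $\nabla_{V_2}\nabla_{V_2}$ (since $\phi$ is constant in $\xi,\zeta$), and the PDE forces these to contribute exactly $F+R_{\bar g}$, which by design dominates the Ricci and second-fundamental-form terms. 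The bounded-geometry hypothesis and the positive Yamabe constant of $M\times\mathbb{S}^1$ are used not for a Yamabe equation but to control the stray $\partial_t^2 u$ term introduced by the auxiliary circle. In short: the bad curvature terms are not bounded by the angle condition --- they are \emph{swamped} by a deliberately large forcing term in an auxiliary linear PDE whose ellipticity is what the angle condition secures.
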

\begin{remark}\label{intro:re1} 
Note that the same conclusion follows if we replace the hyperplane by any $ X \times \lbrace \zeta \rbrace \times \lbrace \xi \rbrace $ and apply the associated extrinsic geometry and angle condition. Therefore we only require the angle condition to be held along some hypersurface $ X \times \lbrace \zeta \rbrace \times \lbrace \xi \rbrace $. It follows that Theorem \ref{intro:thm1} partially answers the 1994 Rosenberg-Stolz conjecture \cite[Conjecture 7.1(2)]{RosSto}, as we stated in Corollary \ref{Riem:cor1}.  In particular, the resulting Riemannian metric with positive Riemannian scalar curvature is within the same conformal class of the original Riemannian metric.

We point out that this type of transpose of some geometric notions of positivity relies on a new conformally invariant quantity--the angle condition. The angle condition is an extrinsic geometric condition along hypersurfaces, and can be viewed as a substitute for a mean curvature condition. In fact, positive Riemannian scalar curvature in the ambient space plus some mean curvature condition along the submanifolds is not enough to guarantee positive scalar curvature on submanifolds.

Clearly the product metric $ g_{X} \oplus d\zeta^{2} \oplus d\xi^{2} $ with uniform PSC is a trivial example which satisfies the angle condition. In Example \ref{Mot:ex1}, we set $ X = \mathbb{T}^{n - 1}, n \geqslant 3 $, and construct a complete metric with positive scalar curvature on $ X \times \R^{2} $, such that the angle condition fails on $ X \times \lbrace \zeta \rbrace \times \lbrace \xi \rbrace, \forall \zeta \in \R_{\zeta}, \xi \in \R_{\xi} $. We also give further discussions in \S2, especially in Remark \ref{Mot:re1}.
\end{remark}
Taking the Riemannian path, the result of Theorem \ref{intro:thm1} implies our main result in complex geometry on $ X \times \C $:
\begin{theorem}\label{intro:thm2}
Let $ X $ be a complex manifold with $ \dim_{\C}X \geqslant 1 $. Assume that $ (X \times \C, J, \omega) $ admits a complete Hermitian metric $ \omega $ whose background metric $ g $ is of bounded curvature, and such that $ R_{g} \geqslant \kappa_{0} > 0 $ for some $ \kappa_{0} > 0 $. If in addition (\ref{intro:eqn1a}) or (\ref{intro:eqn1b}) holds on $ X \times \lbrace 0 \rbrace_{\xi} \times \lbrace 0 \rbrace_{\zeta} $, then there exists a Hermitian metric $ \tilde{\omega} $ with positive Chern scalar curvature on $ X \times \C $.
\end{theorem}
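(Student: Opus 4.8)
The plan is to follow the ``Riemannian path'' laid out in the introduction: use Theorem~\ref{intro:thm1} to transfer the positivity down to the compact complex submanifold $X\times\{0\}_\zeta\times\{0\}_\xi$, invoke X.K.\ Yang's theorem on that compact manifold, and then reinstate the $\C$-factor by passing to a product with the flat metric.

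First I would record the elementary but essential observation underlying Step~I of the introduction. The slice $X\cong X\times\{0\}_\zeta\times\{0\}_\xi$, included via $\imath:=\imath_\xi\circ\imath_\zeta$, is a compact complex submanifold of $(X\times\C,J)$; hence $TX$ is $J$-invariant, $J_X:=J|_{TX}$ is the complex structure of $X$, and the pullback $\imath^{*}\omega$ is a Hermitian metric on $X$ whose background Riemannian metric is precisely the induced metric $h_0:=\imath_\zeta^{*}\imath_\xi^{*}g$. In particular $h_0$ is itself the Riemannian metric associated to a Hermitian structure on $X$. Now $g$ is complete, of bounded curvature, satisfies $R_g\geqslant\kappa_0>0$, and obeys \eqref{intro:eqn1a} or \eqref{intro:eqn1b} along $X\times\{0\}_\xi\times\{0\}_\zeta$, so Theorem~\ref{intro:thm1} applies verbatim and furnishes a metric $\tilde g=e^{2\varphi}g$ in the conformal class of $g$ with $R_{\imath_\zeta^{*}\imath_\xi^{*}\tilde g}>0$ on $X$; restricting the conformal factor gives $h:=\imath_\zeta^{*}\imath_\xi^{*}\tilde g=e^{2\varphi|_X}h_0$.

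The second step (Step~II of the introduction) is to check that we have not left Hermitian geometry. A positive conformal rescaling of a Hermitian metric is again Hermitian, since $J$-compatibility $g(J\cdot,J\cdot)=g(\cdot,\cdot)$ is preserved; thus $h$ is the background Riemannian metric of the Hermitian metric $\omega_1:=e^{2\varphi|_X}\,\imath^{*}\omega$ on $X$, and $R_{g_{\omega_1}}=R_h>0$ everywhere on the compact manifold $X$. Hence $(X,\omega_1,J_X)$ is a closed Hermitian manifold whose background Riemannian metric has positive (in particular quasi-positive) scalar curvature, and X.K.\ Yang \cite[Corollary 3.9]{Yang} produces a Hermitian metric $\tilde\omega_X$ on $X$ with positive Chern scalar curvature. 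Finally (Step~III), I would take $\tilde\omega$ to be the product Hermitian metric of $\tilde\omega_X$ on $X$ and the standard flat K\"ahler metric on $\C$: the Chern--Ricci curvature, and hence the Chern scalar curvature, of a product of Hermitian manifolds is the sum of the contributions of the factors, while $\C$ with its Euclidean metric is Chern-flat, so the Chern scalar curvature of $\tilde\omega$ at $(x,z)$ equals that of $\tilde\omega_X$ at $x$, which is positive; moreover $\tilde\omega$ is complete.

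Essentially all of the difficulty is concentrated in Theorem~\ref{intro:thm1}, which we are assuming. Within the present argument the one point demanding genuine care is the bookkeeping of Step~II: one must check that the metric produced on $X$ by Theorem~\ref{intro:thm1} is conformal to $h_0$, which \emph{is} the background of a Hermitian metric, so that the conformal factor keeps us inside the class of Hermitian metrics and Yang's hypothesis is legitimately met. The additivity computation of Step~III, and the verification that the hypotheses on $g$ pass unchanged into Theorem~\ref{intro:thm1}, are routine.
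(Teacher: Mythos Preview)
Your proposal is correct and follows essentially the same three-step Riemannian path as the paper's own proof (Theorem~\ref{Ch:thm2}): apply Theorem~\ref{intro:thm1} to obtain a conformal metric on $X$ with positive Riemannian scalar curvature, observe that conformal rescaling preserves the Hermitian structure so Yang's theorem applies on $X$, and then take the product with the flat K\"ahler metric on $\C$. The only cosmetic difference is that the paper explicitly fixes the product complex structure $J=J_0\oplus J_1$ at the outset, whereas you phrase this as $X\times\{0\}$ being a complex submanifold; these amount to the same thing.
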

Theorem \ref{intro:thm2} generalizes the result of X. K. Yang \cite{Yang}, \cite{Yang2} from closed Hermitian manifolds to noncompact manifolds of cylindrical type. In particular, $ X \times \C $ may not be a trivial holomorphic line bundle, but just a trivial real vector bundle of rank $ 2 $.
\medskip

The Rosenberg-Stolz conjecture, with the direction from non-positivity of $ X $ to the non-positivity of $ X \times \R^{k} $, cannot be strengthened when $ k \geqslant 3 $, due to \cite[Proposition 7.2]{RosSto}: Given any $ X $, $ X \times \R^{k} $ always admits a complete Riemannian metric with uniformly positive Riemannian scalar curvature. But from analytic point of view, we can transpose the positivity from $ X \times \R^{k} $ to $ X $, for all $ k \in \mathbb{N} $, with an appropriate angle condition analogous to (\ref{intro:eqn1a}). In other words, if $ X \times \R^{k} $ admits some notions of positivity, and the geometry of $ X \times \R^{k} $ is not ``too bad", then $ X $ must ``inherit" some notions of positivity.

Denote by $ X \times \R^{k} : = X \times \R_{1} \times \R_{2} \times \dotso \times \R_{k} $ with $ \R_{i} \cong \R, \forall i $. For general $ j $, we also denote by
\begin{equation*}
\begin{tikzcd}
    (X \times \R_{1} \times \dotso \times \R_{j}, \imath_{j + 1}^{*} \dotso \imath_{k}^{*} g) \arrow[r, shift left = 1.5ex, "\pi_{j}"] & (X \times \R_{1} \times \dotso \times \R_{j - 1}, \imath_{j}^{*} \dotso \imath_{k}^{*}g) \arrow[l, "\imath_{j}"] 
\end{tikzcd}
\end{equation*}
We denote by $ \nu_{j + 1} \in \Gamma(X \times \R_{1} \times \dotso \times R_{j + 1}) $ the unit normal vector field along $ X \times \R_{1} \times \dotso \times \R_{j} $, and the canonical vector field by $ \partial_{j + 1} \in \R_{j + 1} $ for $ j = 0, \dotso, k - 1 $. Denote by $ \mu_{j} = d(\pi_{2} \circ \dotso \circ \pi_{j})_{*}\nu_{j} \in \Gamma(X \times \R_{1}), j = 2, \dotso, k $, and the induced metric on $ X \times \R_{1} \times \dotso \times \R_{j - 1} $ by $ \imath_{j}^{*} \dotso \imath_{k}^{*}g : = g_{j - 1}, j = 2, \dotso, k $. Set
\begin{equation}\label{intro:eqn4a}
    A_{j} = \frac{n + j - 3}{n - 2} \sec^{2}\left(\angle_{g_{1}}(\mu_{j}, \partial_{1}) \right) = \frac{n + j - 3}{n - 2} \frac{g_{1}(\partial_{1}, \partial_{1})}{g_{1}(\mu_{j}, \partial_{1})^{2}}, B_{j} = \frac{n + j - 3}{n - 2}, j = 2, \dotso, k.
\end{equation}
For instance, $ \pi_{2} = \pi_{\xi} $, $ \nu_{2} = \nu_{g} $ and $ \mu_{2} = d(\pi_{\xi})_{*}\nu_{g} $ by our notation of (\ref{intro:eqn1}) when $ n = 2 $. We now generalize Theorem \ref{intro:thm1} to the result of transposing the positivity of Riemannian scalar curvature from $ X \times \R^{k} $ to $ X $. \begin{theorem}\label{intro:thm3}
Let $ X $ be an oriented, closed manifold with $ \dim_{\R}X \geqslant 2 $. Assume that $ X \times \R^{k} $ admits a Riemannian metric $ g $ that is of bounded curvature, and such that $ R_{g} \geqslant \kappa_{0} > 0 $ for some $ \kappa_{0} > 0 $. If
\begin{equation}\label{intro:eqn4}
\sum_{j = 2}^{k} A_{j} + \sec^{2}(\angle_{g_{1}}(\nu_{1}, \partial_{1})) = \sum_{j = 2}^{n} A_{j} + \frac{g_{1}(\partial_{1}, \partial_{1})}{g_{1}(\nu_{1}, \partial_{1})^{2}} < 2 + \sum_{j = 2}^{k} B_{j},
\end{equation}
on $ X \times \lbrace 0 \rbrace_{1} \times \dotso \times \lbrace 0 \rbrace_{k} $ (where, for $ j = 2, \dotso, n $, we set $ A_{j} \equiv 0 $ and remove associated $ B_{j} $ on the right hand side of (\ref{intro:eqn4}) whenever $ \mu_{j} \equiv 0 $ along $ X \times \R_{1} $), then there exists a Riemannian metric $ \tilde{g} $ in the conformal class of $ g $ such that $ R_{\imath_{1}^{*} \dotso \imath_{k}^{*} \tilde{g}} > 0 $ on $ X $.
\end{theorem}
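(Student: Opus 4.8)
The plan is to run the conformal-geometric mechanism behind Theorem~\ref{intro:thm1}, now threaded through the full flag of slices
$X\times\R^{k}\supset X\times\R_{1}\times\dotsb\times\R_{k-1}\supset\dotsb\supset X\times\R_{1}\supset X$.
Since $X$ is closed, the conclusion is equivalent to positivity of the Yamabe invariant of the induced conformal class $[\imath_{1}^{*}\dotsb\imath_{k}^{*}g]$ on $X$, and for that it suffices to exhibit a single conformal representative $\tilde g$ of $g$ on $X\times\R^{k}$ with $R_{\imath_{1}^{*}\dotsb\imath_{k}^{*}\tilde g}>0$ pointwise on $X$. So the whole problem reduces to bounding from below the scalar curvature of the metric that $\tilde g$ induces on $X$, for a well-chosen conformal gauge upstairs.

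The first step is where the standing hypotheses are used, exactly as in Theorem~\ref{intro:thm1}: because $(X\times\R^{k},g)$ is complete, of bounded curvature in Aubin's sense, and satisfies $R_{g}\geqslant\kappa_{0}>0$, the conformal-Laplacian analysis on manifolds of bounded geometry applies. I would use it to produce a conformal factor $u$, bounded above and below by positive constants together with the derivatives that enter below, so that $\tilde g=u^{4/(n+k-3)}g$ still has $R_{\tilde g}\geqslant\kappa_{1}>0$ while the extrinsic data of the flag of slices is put into a normal form along $X$ adapted to the angle condition (in particular the mean-curvature contributions the angle condition cannot see are removed). It is essential that this solve is carried out once, on the ambient manifold, rather than inductively one $\R$-factor at a time, since a coordinate slice of a bounded-geometry manifold need not itself have bounded geometry, so the hypothesis cannot simply be reapplied down the flag.

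The core of the argument is the iterated Gauss--Codazzi computation along the flag. Writing $R_{j},\mathrm{Ric}_{j}$ for the scalar and Ricci curvatures of the $j$-th slice and $A_{j},H_{j}=\operatorname{tr}A_{j}$ for the second fundamental form and mean curvature of the $(j-1)$-st slice inside it, one has $R_{j-1}=R_{j}-2\,\mathrm{Ric}_{j}(\nu_{j},\nu_{j})+H_{j}^{2}-|A_{j}|^{2}$; compounding this from $j=k$ down to $j=1$ and re-expressing each intermediate normal Ricci term through the Gauss equation one level up, one writes $R_{\imath_{1}^{*}\dotsb\imath_{k}^{*}\tilde g}$ on $X$ as the good term $R_{\tilde g}$ minus a single sum of extrinsic error terms (together with curvature terms mixing the normal directions, which must be shown to reassemble harmlessly). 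Each surviving error measures how the corresponding slice tilts away from the product structure; pushing the normals $\nu_{j}$ down to $X\times\R_{1}$ turns that tilt into the vectors $\mu_{j}$, and — as in the computation behind (\ref{intro:eqn1a}) — a Cauchy--Schwarz / completion-of-squares step bounds the non-negative part of the level-$j$ error by $\tfrac{n+j-3}{n-2}\tan^{2}\!\big(\angle_{g_{1}}(\mu_{j},\partial_{1})\big)=A_{j}-B_{j}$, with the last level contributing $\tan^{2}\!\big(\angle_{g_{1}}(\nu_{1},\partial_{1})\big)=\sec^{2}(\angle_{g_{1}}(\nu_{1},\partial_{1}))-1$. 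Since $R_{\tilde g}>0$, hypothesis (\ref{intro:eqn4}) — equivalently $\tan^{2}(\angle_{g_{1}}(\nu_{1},\partial_{1}))+\sum_{j=2}^{k}\tfrac{n+j-3}{n-2}\tan^{2}(\angle_{g_{1}}(\mu_{j},\partial_{1}))<2$ — is precisely what forces $R_{\imath_{1}^{*}\dotsb\imath_{k}^{*}\tilde g}>0$ everywhere on $X$; when some $\mu_{j}\equiv0$ along $X\times\R_{1}$, that slice is untilted and carries no error, which is why the corresponding term is dropped, in the same way that (\ref{intro:eqn1b}) specializes (\ref{intro:eqn1a}).

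The step I expect to be the main obstacle is the bookkeeping in this iterated expansion: showing that the nested re-expressions of the intermediate $\mathrm{Ric}_{j}(\nu_{j},\nu_{j})$, and the mixed ambient-curvature terms that appear at every level, collapse into a single sum with exactly the coefficients $\tfrac{n+j-3}{n-2}$, so that the threshold on the right of (\ref{intro:eqn4}) comes out to be exactly $2+\sum_{j=2}^{k}B_{j}$ rather than some level-dependent quantity, and that the tilt of an intermediate slice — a priori governed by the angle between $\nu_{j}$ and $\partial_{j}$ — is faithfully recorded, after the successive projections, by the angle between $\mu_{j}$ and $\partial_{1}$. A secondary technical point is verifying that the single ambient conformal solve really supplies the required normal form for all $k$ slices simultaneously; I expect this, as in Theorem~\ref{intro:thm1}, to reduce to the scalar Yamabe problem on the closed manifold $X$ together with Aubin's bounded-geometry estimates on $X\times\R^{k}$.
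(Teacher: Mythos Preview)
Your proposal has a genuine gap: it misidentifies both the role of the angle condition and the mechanism that controls the extrinsic error terms.

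In the paper's argument (for $k=2$ in \S3--\S4, sketched for general $k$ in \S6), the Ricci terms $\mathrm{Ric}_{g_{j}}(\nu_{j},\nu_{j})$ and the $|A_{g_{j}}|^{2}$, $h_{g_{j}}^{2}$ terms in the iterated Gauss--Codazzi expansion (\ref{Gen:eqn1}) are \emph{not} bounded by any Cauchy--Schwarz step involving the angles. The angle condition carries no information about the size of these curvature quantities. Instead, the paper introduces an auxiliary circle factor and works on the closed manifold $W=X\times\mathbb{S}^{1}_{t}$, where it solves the elliptic PDE
\[
\frac{4(n+k-3)}{n-2}\nabla_{V_{1}}\nabla_{V_{1}}u+\dotsb+4\nabla_{V_{k}}\nabla_{V_{k}}u-4\Delta_{\sigma^{*}\bar g}u+R_{\bar g}|_{W}\,u=F.
\]
The angle hypothesis (\ref{intro:eqn4}) is used \emph{only} to make this operator uniformly elliptic on $W$ (the computation of Lemma~\ref{Pre:lemma1}, generalized); the coefficients $\tfrac{n+j-3}{n-2}$ arise there from the principal symbol, not from any estimate on extrinsic curvature. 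The inhomogeneous term $F$ is designed (Lemma~\ref{Pre:lemma2}) to be an arbitrarily large constant $C+1$ on the slice $X\times\{0\}_{t}$ while having arbitrarily small $\mathcal L^{p}$-norm; this simultaneously forces $\|u\|_{\mathcal C^{1,\alpha}}\ll1$ and provides, via the PDE, a large positive contribution that dominates the uncontrolled Ricci and second-fundamental-form terms by brute force (see (\ref{Riem:eqn0a}) and the final inequality in the proof of Theorem~\ref{Riem:thm1}). The bounded-geometry hypothesis enters through the partial $\mathcal C^{2}$-estimate on $\partial^{2}u/\partial t^{2}$ (Lemma~\ref{Pre:lemma5}), which uses positivity of the Yamabe constant on the noncompact ambient space.

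Your plan of producing $\tilde g$ directly on $X\times\R^{k}$ so that ``mean-curvature contributions\ldots are removed'' and then bounding the remaining errors by $\sum(A_{j}-B_{j})$ cannot work as stated: there is no conformal gauge that kills the $\mathrm{Ric}(\nu,\nu)$ terms, and the paper explicitly notes (end of \S2) that the natural second-order operator appearing in (\ref{intro:eqn3}) is not elliptic on $X\times\R^{k}$ itself, while on $X$ one cannot have both a large inhomogeneous term and $\mathcal C^{1,\alpha}$-small solution. The auxiliary $\mathbb S^{1}$ is not an optional simplification but the device that resolves this tension.
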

When $ k = 2k' $ is an even number, and $ X $ is a complex manifold, we generalize the result of Theorem \ref{intro:thm2} on $ X \times \C^{k} $, according to the same Riemannian path.
\begin{theorem}\label{intro:thm4}
Let $ X $ be a complex manifold with $ \dim_{\C}X \geqslant 1 $. Assume that $ (X \times \C^{k}, J, \omega) $ admits a complete Hermitian metric $ \omega $ whose background metric $ g $ satisfies the same hypotheses in Theorem \ref{intro:thm3}, then there exists a Hermitian metric $ \tilde{\omega} $ with positive Chern scalar curvature on $ X \times \C^{k} $.
\end{theorem}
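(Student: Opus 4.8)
The plan is to follow the ``Riemannian path'' advertised in the introduction, with Theorem \ref{intro:thm3} as the only substantial ingredient and everything else soft. First I would identify $X \times \C^{k}$ with $X \times \R^{2k}$ as a smooth manifold. By hypothesis the background metric $g$ of $\omega$ is of bounded curvature, satisfies $R_{g} \geqslant \kappa_{0} > 0$, and obeys the angle condition (\ref{intro:eqn4}) along some slice $X \times \lbrace p \rbrace$, $p \in \C^{k}$. So Theorem \ref{intro:thm3}, applied to the $2k$ Euclidean factors, furnishes a metric $\tilde{g} = e^{2\varphi} g$ in the conformal class of $g$ whose restriction $g_{X} := \imath_{1}^{*} \dotso \imath_{2k}^{*}\tilde{g}$ to $X = X \times \lbrace p \rbrace$ satisfies $R_{g_{X}} > 0$ everywhere on $X$; in particular $R_{g_{X}}$ is quasi-positive.

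Next I would observe that $g_{X}$ is compatible with the complex structure $J_{X}$ of $X$. Since $X \times \C^{k}$ carries the product complex structure, each slice $X \times \lbrace p \rbrace$ is a complex submanifold biholomorphic to $X$, so the restriction $g|_{X \times \lbrace p \rbrace}$ is automatically $J_{X}$-Hermitian; and as $g_{X} = e^{2\varphi|_{X}} \cdot g|_{X \times \lbrace p \rbrace}$ is conformal to it, $g_{X}$ is $J_{X}$-Hermitian too. Hence $g_{X}$ is the Riemannian metric underlying a Hermitian metric on the \emph{compact} complex manifold $X$, with $R_{g_{X}}$ quasi-positive, and Yang's theorem \cite[Corollary 3.9]{Yang} produces a Hermitian metric $\tilde{\omega}_{X}$ on $X$ with positive Chern scalar curvature $s_{C}(\tilde{\omega}_{X}) > 0$.

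Finally I would push this back up to the total space: let $\tilde{\omega} := \pi_{X}^{*}\tilde{\omega}_{X} + \pi_{\C^{k}}^{*}\omega_{0}$, where $\omega_{0}$ is the standard flat K\"ahler metric on $\C^{k}$ and $\pi_{X}, \pi_{\C^{k}}$ are the two projections. This is a complete Hermitian metric on $X \times \C^{k}$, being the product of a metric on the compact factor $X$ with a complete metric on $\C^{k}$; and since the Chern connection of a product Hermitian metric splits as the direct sum of the factor Chern connections, the Chern scalar curvature is additive, so $s_{C}(\tilde{\omega}) = \pi_{X}^{*}s_{C}(\tilde{\omega}_{X}) + \pi_{\C^{k}}^{*}s_{C}(\omega_{0}) = \pi_{X}^{*}s_{C}(\tilde{\omega}_{X}) > 0$, because $s_{C}(\omega_{0}) \equiv 0$. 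This is the asserted metric; for $k = 1$ it recovers Theorem \ref{intro:thm2}.

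The only genuinely nontrivial step is the descent via Theorem \ref{intro:thm3}; the rest is formal. The point that most deserves care is the compatibility assertion in the second paragraph --- that the conformal factor produced by Theorem \ref{intro:thm3}, once restricted to $X$, still leaves the metric $J_{X}$-invariant --- but this is immediate once one notes that $X \times \lbrace p \rbrace$ is a complex submanifold and that conformal rescalings preserve $J$-compatibility. As in Remark \ref{intro:re1}, every slice $X \times \lbrace p \rbrace$ is such a complex submanifold, so it suffices that the angle condition hold along a single slice and the argument goes through unchanged.
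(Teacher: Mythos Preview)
Your proposal is correct and follows essentially the same approach as the paper: this is precisely the argument of Theorem \ref{Ch:thm2} (the $k=1$ case) carried over verbatim to $\C^{k}$, which is exactly what the paper indicates for Corollary \ref{Gen:cor1}. The three steps --- descend to $X$ via Theorem \ref{intro:thm3}, invoke Yang's result on the compact factor, then take the product with the flat K\"ahler metric on $\C^{k}$ --- match the paper's proof line for line, and your justification of the $J$-compatibility of the restricted conformal metric is the same observation the paper makes explicitly.
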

\medskip

This article is organized as follows: In \S2, we introduce our motivation from a conformal geometry point of view. We also introduce related works by different approaches. In particular, we give examples and discussions to show the necessity of this angle condition to some extent.

In \S3, \S4, and \S5, we address the problems with ambient space either $ X \times \R^{2} $ or $ X \times \C $.  In \S3, we construct a geometric partial differential equation in (\ref{Pre:eqn5}). We confirm the ellipticity of the differential operator in Lemma \ref{Pre:lemma1}. We construct the inhomogeneous term of the partial differential equation in Lemma \ref{Pre:lemma2}. With the desired inhomogeneous term and the ellipticity, we obtain $ \calC^{1, \alpha} $-smallness of the solution in Lemma \ref{Pre:lemma3}, and give partial $ \calC^{2} $-estimates in Lemma \ref{Pre:lemma5} with the help of the global positivity established in Lemma \ref{Pre:lemma4}. After defining conformal factors in various spaces, we compare the Laplacians for various spaces in Lemma \ref{Pre:lemma6} with respect to functions in (\ref{Pre:eqn7}).

In \S4, we apply the preliminaries in \S3 to prove in Theorem \ref{Riem:thm1} that there exists a Riemannian metric with positive Riemannian scalar curvature on $ X $ by imposing the angle condition, and such a metric is conformal to the original metric. As a contrapositive statement, we partially answer the 1994 Rosenberg-Stolz conjecture in Corollary \ref{Riem:cor1}. In \S5, we apply Theorem \ref{Riem:thm1} and the feature of conformal transformation to generalize the result of X. K. Yang \cite[Corollary 3.9]{Yang} in Theorem \ref{Ch:thm2} for noncompact complex manifolds of the type $ X \times \C $, and therefore complete our Riemannian path to this complex geometry problem.

In \S6, we further generalize our results to either Riemannian manifolds $ X \times \R^{k} $ or complex manifolds $ X \times \C^{k} $, for any $ k \in \mathbb{N} $, respectively. The techniques used are the same as in previous sections, with just some added notational complexity.
\medskip

\section{Related Work, Motivation and Examples of Angle Condition}
As we mentioned in \S1, we resolve the complex geometry result via a Riemannian path. We consider this type of problems from a conformal geometry point of view, along with repeated Gauss-Codazzi equations on codimension one hypersurfaces. Such a Riemannian path unveils the positivity of the Riemannian scalar curvature in the submanifold $ X $ from the positivity of the Riemannian scalar curvature in the ambient space $ X \times \R^{2} $. 

From a contrapositive statement point of view, the 1994 Rosenberg-Stolz conjecture is the type of problem by asking the existence of positive scalar curvature on submanifolds, provided that some notions of positivity are given in the ambient spaces. For noncompact manifolds, Step I results hold for many cases. Generally, there are two types of methods in the study of scalar curvature problems on hypersurfaces: ones is based on Dirac operator and index theory on spin manifolds; the other is based on the geometric measure theory initiated by Schoen-Yau minimal hypersurfaces method \cite{SY2}; as a modified version of the minimal surface method, the $ \mu $-bubble method, pioneered by Gromov's seminal work \cite{GROMOV}, \cite{GROMOV2}, is another major stream in the study of positive scalar curvature manifolds. A comprehensive survey is given by \cite{JR}. In particular, the studies via index-theoretic obstructions can be found in \cite{HPS}, \cite{JR}, \cite{RosSto}, \cite{Zeilder2}; the studies via $ \mu $-bubbles or minimal surface method can be found in \cite{CRZ}. However, they provide no information whether their Riemannian metrics with positive Riemannian scalar curvature are compatible with Hermitian structures. 

We solve this type of problem according to a different approach. By introducing an auxiliary $ 1 $-dimensional space, and by applying geometric partial differential equations that are coming from the conformal transformation of the geometric quantities in the Gauss-Codazzi equation, we successfully answered these type of questions on closed manifolds \cite{RX2}($ \mathbb{S}^{1} $-stability conjecture \cite[Conjecture 1.24]{JR}), on noncompact manifolds with boundary \cite{XU10} (generalization of Gromov-Lawson scalar and curvature comparison theorem on compact cylinders \cite{GL}), and on noncompact manifolds $ X \times \R $ \cite{XU11}(Rosenberg-Stolz \cite[Conjecture 7.1(1)]{RosSto}), provided that some geometric angle conditions are assumed. Since conformal transformation preserves the Hermitian structure, it is natural to apply our conformal geometry and partial differential equation method to address the scenario $ X \times \C $ with the introduction of some geometric angle condition and an appropriate auxiliary space. 

In a word, our conformal geometry and geometric analysis approach applies equally well in transposing some notions of positivity from ambient Riemannian/Hermitian manifolds to their (sub)manifolds.

Let's consider the Step I in \S1 first. By Riemannian Gauss-Codazzi equation on $ X_{\xi, 0} $,
\begin{equation*}
    R_{g} = R_{\imath_{\xi}^{*}g} + 2\text{Ric}_{g}(\nu_{g}, \nu_{g}) + \lVert A_{g} \rVert^{2} - h_{g}^{2} \: {\rm on} \; X_{\xi, 0}.
\end{equation*}
Here the scalar curvature $ R_{g} $ is really $ R_{g} = R_{g} |_{X_{\xi, 0}} = \imath_{\xi}^{*} R_{g} $. And the same manner follows for other functions. Applying Gauss-Codazzi equation again on $ X_{\zeta, 0} $,
\begin{equation*}
    R_{\imath_{\xi}^{*}g} = R_{\imath_{\zeta}^{*} \imath_{\xi}^{*}g} + 2\text{Ric}_{\imath_{\xi}^{*}g}(\nu_{\imath_{\xi}^{*}g}, \nu_{\imath_{\xi}^{*}g}) + \lVert A_{\imath_{\xi}^{*}g} \rVert^{2} - h_{\imath_{\xi}^{*}g}^{2} \; {\rm on} \; X_{\zeta, 0}.
\end{equation*}
Combining the above two equations together, we have
\begin{equation}\label{intro:eqn2}
R_{\imath_{\zeta}^{*} \imath_{\xi}^{*}g} = R_{g} - 2\text{Ric}_{\imath_{\xi}^{*}g}(\nu_{\imath_{\xi}^{*}g}, \nu_{\imath_{\xi}^{*}g}) - 2\text{Ric}_{g}(\nu_{g}, \nu_{g}) + h_{g}^{2} + h_{\imath_{\xi}^{*}g}^{2} - \lVert A_{g} \rVert^{2} - \lVert A_{\imath_{\xi}^{*}g} \rVert^{2} \; {\rm on} \; X_{\zeta, 0},
\end{equation}
that holds for every Riemannian metric $ g $. Given $ R_{g} \geqslant \kappa_{0} > 0 $ for some $ \kappa_{0} \in \R_{+} $, we want $ R_{\imath_{\zeta}^{*} \imath_{\xi}^{*}g} > 0 $ by (\ref{intro:eqn2}). But knowing only the positivity of the scalar curvature is not enough to evaluate the sign of Ricci curvature tensors and the largeness of the second fundamental forms. 

The conformal class of $ g $, denoted by $ [g] $, is defined to be
\begin{equation*}
    [g] = \lbrace e^{2\phi}g : \phi \in \calC^{\infty}(X \times \R^{2}) \rbrace.
\end{equation*}
The conformal transformation of $ g $, say $ \tilde{g} = e^{2\phi} g $, induces conformal metrics $ \imath_{\xi}^{*} \tilde{g} = e^{2\imath_{\xi}^{*} \phi} \imath_{\xi}^{*}g $ and $ \imath_{\zeta}^{*}\imath_{\xi}^{*} \tilde{g} = e^{2\imath_{\zeta}^{*}\imath_{\xi}^{*} \phi} \imath_{\zeta}^{*}\imath_{\xi}^{*}g $ on $ X_{\xi, 0} $ and $ X_{\zeta, 0} $, respectively. Recall that $ \dim_{\R} X = n - 1, \dim_{\R} (X \times \R_{\zeta}) = n, \dim_{\R} (X \times \R^{2}) = n + 1 $. Applying (\ref{intro:eqn2}) with respect to $ \tilde{g} $ and associated unit normal vector fields $ \nu_{\tilde{g}} = e^{-\phi} \nu_{g} $ and $ \nu_{\imath_{\xi}^{*}\tilde{g}} = e^{-\imath_{\xi}^{*} \phi} \nu_{\imath_{\xi}^{*}g} $, also by conformal transformations of Ricci curvature tensors, scalar curvatures, second fundamental forms and mean curvatures, we have
\begin{equation}\label{intro:eqn3}
\begin{split}
R_{\imath_{\zeta}^{*} \imath_{\xi}^{*}\tilde{g}} & = \left( R_{\tilde{g}} - 2\text{Ric}_{\imath_{\xi}^{*}\tilde{g}}(\nu_{\imath_{\xi}^{*}\tilde{g}}, \nu_{\imath_{\xi}^{*}\tilde{g}}) - 2\text{Ric}_{\tilde{g}}(\nu_{\tilde{g}}, \nu_{\tilde{g}}) + h_{\tilde{g}}^{2} + h_{\imath_{\xi}^{*}\tilde{g}}^{2} - \lVert A_{\tilde{g}} \rVert^{2} - \lVert A_{\imath_{\xi}^{*}\tilde{g}} \rVert^{2} \right) \bigg|_{X_{\zeta, 0}} \\
& = e^{-2\imath_{\zeta}^{*}\imath_{\xi}^{*}\phi} \left( R_{g} - 2\text{Ric}_{\imath_{\xi}^{*}g}(\nu_{\imath_{\xi}^{*}g}, \nu_{\imath_{\xi}^{*}g}) - 2\text{Ric}_{g}(\nu_{g}, \nu_{g}) + h_{g}^{2} + h_{\imath_{\xi}^{*}g}^{2} - \lVert A_{g} \rVert^{2} - \lVert A_{\imath_{\xi}^{*}g} \rVert^{2} \right) \bigg|_{X_{\zeta, 0}} \\
& \qquad + e^{-2\imath_{\zeta}^{*}\imath_{\xi}^{*}\phi} \left(2(n - 1)\nabla_{\nu_{g}} \nabla_{\nu_{g}} \phi + 2(n - 2) \nabla_{\nu_{\imath_{\xi}^{*}g}} \nabla_{\nu_{\imath_{\xi}^{*}g}} (\imath_{\xi}^{*}\phi) -2(n - 1) \Delta_{g} \phi + 2 \Delta_{\imath_{\xi}^{*}g} \left( \imath_{\xi}^{*}\phi \right) \right) \bigg|_{X_{\zeta, 0}} \\
& \qquad \qquad + e^{-2\imath_{\zeta}^{*}\imath_{\xi}^{*}\phi} \left( - (n - 2)(n - 1) \lvert \nabla_{g} \phi \rvert_{g}^{2} + 2(n - 2) \lvert \nabla_{\imath_{\xi}^{*}g} (\imath_{\xi}^{*}\phi) \rvert_{\imath_{\xi}^{*}g}^{2} \right) \bigg|_{X_{\zeta, 0}} \\
& \qquad \qquad \qquad + e^{-2\imath_{\zeta}^{*}\imath_{\xi}^{*}\phi} \left( - 2(n - 1) \left( \nabla_{\nu_{g}} \phi \right)^{2} - 2(n - 2) \left( \nabla_{\nu_{\imath_{\xi}^{*}g}} (\imath_{\xi}^{*}\phi) \right)^{2} \right) \bigg|_{X_{\zeta, 0}}.
\end{split}
\end{equation}
Here $ -\Delta_{g}, -\Delta_{\imath_{\xi}^{*}g} $ are positive definite Laplace-Beltrami operators with respect to $ g , \imath_{\xi}^{*}g $. By (\ref{intro:eqn3}), we expect to construct a second order partial differential equation with leading differential operator closely related to the operator $ 2(n - 1)\nabla_{\nu_{g}} \nabla_{\nu_{g}} \phi + 2(n - 2) \nabla_{\nu_{\imath_{\xi}^{*}g}} \nabla_{\nu_{\imath_{\xi}^{*}g}} (\imath_{\xi}^{*}\phi) -2(n - 1) \Delta_{g} \phi + 2 \Delta_{\imath_{\xi}^{*}g} \left( \imath_{\xi}^{*}\phi \right) $ in (\ref{intro:eqn3}), with a large enough positive inhomogeneous term on $ X_{\zeta, 0} $. We need to get a smooth solution of the partial differential equation in some appropriate space, and such that the solution has small enough $ \calC_{1, \alpha} $-norm. By restricting the differential relation into $ X \cong X_{\zeta, 0} $, it then follows that the largeness of the inhomogeneous term of the partial differential equation dominates all other terms in (\ref{intro:eqn3}), hence the positivity of $ R_{\imath_{\zeta}^{*} \imath_{\xi}^{*}\tilde{g}} $ can be achieved.

Unfortunately, we cannot have all above requirements simultaneously in either $ X \times \R^{2} $ or any submanifolds of it. For example, the operator is not elliptic unless in the space $ X_{\zeta, 0} $; but if we cater to the ellipticity and the largeness of the inhomogeneous term, then the $ \calC_{1, \alpha} $-smallness of the solution is not possible; etc.

Inspired by our recent works in transposing the positivities from ambient spaces to submanifolds \cite{RX2}, \cite{XU11}, \cite{XU10}, we introduce an auxiliary one-dimensional space $ \mathbb{S}^{1} $, labeled by global $ t $-variable, to resolve the issues. Analogously, we need to impose some conformally invariant geometric condition--an angle condition along $ X_{\zeta, 0} $ with respect to the metric $ \imath_{\xi}^{*}g $ in $ X_{\xi, 0} \cong X \times \R_{\zeta} $. To some extent, our geometric conditions (\ref{intro:eqn1a}) and (\ref{intro:eqn4a}) are substitutions of the topological obstructions in e.g. \cite{CRZ}, \cite{HPS}, \cite{JR}, \cite{RosSto}, \cite{Zeilder2}. However, instead of topological restrictions, we impose a conformally invariant geometric condition, which applies to all dimensions. An immediate example of the metric that satisfies the angle condition in $ X \times \R^{2} $ is of the form $ h \oplus d\xi^{2} \oplus d\zeta^{2} $ where $ h $ is a Riemannian metric on $ X $. The following diagram summarizes the introduction of the auxiliary space:
\begin{equation}\label{intro:diagram}
\begin{tikzcd} (M : = X \times \R^{2}, g) \arrow[r, "\tau_{1}"] & ( M \times \mathbb{S}^{1} = X \times \R^{2} \times \mathbb{S}^{1}_{t}, \bar{g} : = g \oplus dt^{2}) \arrow[d, shift left = 1.5ex, "\pi"] \\
(X, \imath_{\zeta}^{*}\imath_{\xi}^{*}g) \arrow[r,"\tau_{2}"]\arrow[u,"\jmath"] &  (W : = X \times \mathbb{S}^{1}_{t}, \sigma^{*}(\bar{g})) 
\arrow[u,"\sigma"]
\end{tikzcd}
\end{equation}
Here we assign the $ t $-variable to the circle $ \mathbb{S}^{1} $ with standard metric $ dt^{2} $. We will interchangeably use $ \mathbb{S}^{1} = \mathbb{S}_{t}^{1} $ for clarity. We always identify $ W \cong W \times \lbrace 0 \rbrace_{\zeta} \times \lbrace 0 \rbrace_{\xi} \subset M \times \mathbb{S}^{1}_{t} $. Denote the Riemannian metric on $ M \times \mathbb{S}^{1} $ by $ \tilde{g} : = g \oplus dt^{2} $. We define $ \sigma : W \rightarrow M \times \mathbb{S}^{1}_{t} $ is given by $ \sigma(w) = (w, 0, 0) $ with fixed point $ (0, 0) \in \R_{\zeta} \times \R_{\xi} $. Fix a point $ P \in \mathbb{S}^{1} $, we also define $ \tau_{1} : M \rightarrow M \times \mathbb{S}^{1}, \tau_{2} : X \rightarrow W $ by sending $ M \ni x \mapsto (x, P) = \tau_{1}(x), X \ni y \mapsto (y, P) = \tau_{2}(y) $, respectively. Choose a local chart $ (U, \Psi) $ that containing $ P $, we may identify $ P $ with the point $ 0 = \Psi(P) $, labeled by $ \lbrace P \rbrace_{t} $, or $ \lbrace 0 \rbrace_{0} $ in local coordinates.

To resolve the question in Step I, we construct an elliptic partial differential equation in the closed manifold $ W $, where the angle condition is applied to ensure the ellipticity. Such a partial differential equation satisfies all requirements. We then use the solution to construct the conformal factor on $ M $, then apply (\ref{intro:eqn3}).

Once we get the positivity of $ R_{\imath_{\zeta}^{*} \imath_{\xi}^{*} \tilde{g}} $ by conformal transformation $ g \mapsto e^{2\phi} g = \tilde{g} $, Step II is straightforward: If $ g(J \cdot, \cdot) = \omega(\cdot, \cdot) $ for some Hermitian metric, then so is $ \tilde{g} $, since $ g(J\cdot, J\cdot) = g(\cdot, \cdot) $ definitely implies $ \tilde{g}(J\cdot, J\cdot) = \tilde{g}(\cdot, \cdot) $ by multiplying the conformal factor $ e^{2\phi} $. Step III then follows by results for closed Hermitian manifolds.

We point out that the angle conditions (\ref{intro:eqn1a}), (\ref{intro:eqn1b}), (\ref{intro:eqn4}) are computable. For example, with a choice of local coordinates $ (x^{1}, \dotso, x^{n - 1}, x^{n} = \zeta) $ in an open subset of $ X \times \R_{\zeta} $, we have $ \imath_{\xi}^{*}g_{ij} = \imath_{\xi}^{*}g (\partial_{i}, \partial_{j}) $, hence locally
\begin{equation*}
    \sec^{2} (\angle_{\imath_{\xi}^{*}g}(\nu_{\imath_{\xi}^{*}g}, \partial_{\zeta}))   = \frac{\imath_{\xi}^{*}g(\partial_{\zeta},\partial_{\zeta})}{\imath_{\xi}^{*}g(\nu_{\imath_{\xi}^{*}g},\partial_{\zeta})^{2}} = \frac{\sum_{i, j = 1}^{n} \left( (\imath_{\xi}^{*}g)_{ij}(\imath_{\xi}^{*}g)^{in} (\imath_{\xi}^{*}g)^{jn} \right)(\imath_{\xi}^{*}g)_{nn}}{\sum_{i = 1}^{n} \left( (\imath_{\xi}^{*}g)_{in}(\imath_{\xi}^{*}g)^{in} \right)}.
\end{equation*}
Such a quantity is mainly determined by the last row and last column of matrix of the local representation of the Riemannian metrics. Hence, for instance, the angle condition (\ref{intro:eqn1b}) is satisfied if every $ (\imath_{\xi}^{*}g)_{in} $ is not very large. Clearly the product metric is a trivial example. It is well known that there has no complete metric with uniformly Riemannian positive scalar curvature on $ \R^{2} $. It follows that, roughly speaking, the positivity of $ X \times \R^{2} $ is inherited from the positivity of $ X $ if the the Riemannian metric $ g $ on $ X \times \R^{2} $ does not intertwine too much between $ X $-side and $ \R^{2} $-side.

It is far from proving that the angle condition is also necessary. A good example to show the necessity of the angle condition must involve in the intertwine of the Riemannian metric between $ X $-side and the $ \R^{2} $-side, i.e. the metric is not of the product type. However, we can give a nontrivial ``quasi" example on $ X \times \R^{2} $, which indicates that our angle condition is reasonable, and hence necessary to some extent. The following example states in the sense of Rosenberg-Stolz conjecture.
\begin{example}\label{Mot:ex1}
Let $ X = \mathbb{T}^{n - 1}, n \geqslant 3 $. There exists a complete metric $ g = g_{X} \oplus g_{\R^{2}} $ that is of bounded curvature, such that $ R_{g} > 0 $ on $ X \times \R^{2} $, and the angle condition (\ref{intro:eqn1a}) fails along every hypersurfaces $ X \times \lbrace \zeta \rbrace \times \lbrace \xi \rbrace, \forall \zeta \in \R_{\zeta}, \forall \xi \in \R_{\xi} $.
\end{example} 
{\it{Solution}}. In $ (\xi, \zeta) $-coordinates, we set $ g_{\R^{2}} = a d\xi^{2} + 2b(\xi, \zeta) d\xi d\zeta + a d\zeta^{2} $, where $ a > 2 $ is a constant, and
\begin{equation*}
b(\xi, \zeta) = \frac{1}{(1 + e^{-\xi})(1 + e^{-\zeta})} \in (0, 1).
\end{equation*}
Clearly $ a^{2} - b^{2} > 0 $, it follows that $ g_{\R^{2}} $ is a Riemannian metric. Since $ g_{\R^{2}} \geqslant a g_{Euc} $, it is a complete metric. By Brioschi formula for the Gaussian curvature, we have
\begin{equation*}
K_{g_{\R^{2}}} = \frac{a^{2}e^{-\xi}e^{-\zeta}}{(1 + e^{-\xi})^{2}(1 + e^{-\zeta})^{2}(a^{2} - b(\xi, \zeta)^{2})^{2}} \in \left(0, \frac{a^{2}}{4(a^{2} - 1)^{2}} \right) \; {\rm on} \; \R^{2}.
\end{equation*}
It follows that the scalar curvature $ R_{g} > 0 $ on $ X \times \R^{2} $ and $ g $ is of bounded curvature. Applying (\ref{intro:eqn1a}) with $ \nu_{g} = \frac{-b}{\sqrt{a(a^{2} - b^{2})}} \partial_{\zeta} + \frac{\sqrt{a}}{\sqrt{a^{2} - b^{2}}} \partial_{\xi} $ and $ \nu_{\imath_{\xi}^{*}g} = \partial_{\zeta} $, it follows that
\begin{equation*}
\frac{n - 1}{n - 2} \frac{\imath_{\xi}^{*}g(\partial_{\zeta},\partial_{\zeta})}{\imath_{\xi}^{*}g((d\pi_{\xi})_{*}\nu_{g},\partial_{\zeta})^{2}} + \frac{\imath_{\xi}^{*}g(\partial_{\zeta},\partial_{\zeta})}{\imath_{\xi}^{*}g(\nu_{\imath_{\xi}^{*}g},\partial_{\zeta})^{2}} = \frac{a^{2} - b^{2}}{b^{2}} \cdot \frac{n - 1}{n - 2} + 1 > 3 \cdot \frac{n - 1}{n - 2} + 1 > \frac{n - 1}{n - 2} + 2.
\end{equation*}
Hence the angle condition fails on every hypersurfaces $ X \times \lbrace \zeta \rbrace \times \lbrace \xi \rbrace $. $ \Box $
\medskip

\begin{remark}\label{Mot:re1}
The example we gave in Example \ref{Mot:ex1} is not perfect, as the scalar curvature is not uniformly positive. However it indicates the necessity of the angle conditions to some extent. In $ \R^{2} $, uniformly positive scalar curvature is the same as uniformly positive Ricci curvature. By Myers' theorem, every complete manifold with positive Ricci curvature bound is compact. Therefore, if $ X $ admits no PSC metric, then the product metric $ g_{X} \oplus g_{\R^{2}} $ either fails to be complete, or fails to have uniformly positive scalar curvature. However, our example is almost there.

We can give another ``quasi" example: If $ X $ admits $ g_{X} $ with negative scalar curvature, then with the same $ g_{\R^{2}} = ad\xi^{2} + 2b(\xi, \zeta) d\xi d\zeta + a d\zeta^{2} $, we can increase $ a $ so that the scalar curvature $ R_{g} > 0 $ on $ X \times K $ for arbitrary compact subset $ K \subset \R^{2} $. Since $ \R^{2} $ admits a compact exhaustion, this gives a complete metric whose scalar curvature is positive ``almost everywhere", i.e. $ R_{g} \leqslant 0 $ only in $ \R^{2} \backslash K $, where $ K $ can be chosen as closed ball with arbitrary large radius. $ a $ should be large enough to obtain the ``almost everywhere" PSC metric, hence the angle condition must fail.

We point out that it is very hard to construct a concrete example where the positivity is coming from the intertwine between $ X $-side and $ \R $-side.
\end{remark}

\section{The Preliminaries}
In this section, we give the basic setup and all technical results that are essential for our main result in later sections. Precisely speaking, we need to construct an appropriate conformal factor on $ X \times \R^{2} $. Such a conformal factor comes from a solution of some partial differential equation. To begin with, we construct an appropriate partial differential equations in $ W = X \times \mathbb{S}_{t}^{1} $, verify that the differential operator is an elliptic operator, then show that the solution has desired properties. By (\ref{intro:eqn3}), we have to use $ \Delta_{g}\phi $ in $ M = X \times \R^{2} $ and $ \Delta_{\imath_{\xi}^{*}g} (\imath_{\xi}^{*}\phi) $ in $ X \times \R_{\zeta} $, but our partial differential equation is established in $ W $, which suggests that we also need to compare different Laplacians among different spaces. We assume the familiary of the standard knowledge of Sobolev space $ W^{k, p} $, elliptic differential operators, Lax-Milgram theorem, maximum principle, and $ \calL^{p} $-elliptic regularity, see e.g. \cite{Niren4}, \cite{Aubin}. 

First of all, we introduce two global vector fields defined on $ W = X \times \mathbb{S}_{t}^{1} $, which are associated with $ (d\pi_{\xi})_{*} \nu_{g} $ and $ \nu_{\imath_{\xi}^{*}g} $ tangential to $ X_{\xi, 0} $, respectively. Choosing local coordinates $ \lbrace x^{i}, x^{n} = \zeta, x^{n+1} = \xi \rbrace $ around some point $ x \in X_{\zeta, 0} = X \times \lbrace 0 \rbrace_{\zeta} \times \lbrace 0 \rbrace_{\xi} \subset M $ with associated local frame $ \lbrace \partial_{i},  \partial_{n} = \partial_{\zeta}, \partial_{n+1} = \partial_{\xi} \rbrace $ such that $ \lbrace \partial_{i} \rbrace_{i = 1}^{n - 1} $ are tangential to $ X_{\zeta, 0} $, we can write
\begin{align*}
    (d\pi_{\xi})_{*} \nu_{g} & = a\partial_{\zeta} + \sum_{i = 1}^{n - 1} a^{i} \partial_{i}; \\
    \nu_{\imath_{\xi}^{*}g} & = b \partial_{\zeta} + \sum_{i = 1}^{n - 1} b^{i} \partial_{i}.
\end{align*}
Note that $ b $ never vanishes along $ X_{\zeta, 0} $ since $ \nu_{\imath_{\xi}^{*}g} $ is never tangential to $ X_{\zeta, 0} $. But $ a = 0 $ when $ \nu_{g} $ is parallel to $ \partial_{\xi} $ alongside $ X_{\xi, 0} $, i.e. $ \nu_{g} \in \Gamma(T\R_{\xi}) $ hence $ (d\pi_{\xi})_{*} \nu_{g} = 0 $. If $ (d\pi_{\xi})_{*} \nu_{g} \neq 0 $, then $ a $ never nanishes also, since $ \nu_{g} $ is never tangential to $ X_{\xi, 0} $. Clearly in local coordinates,
\begin{align*}
    1 & = \imath_{\xi}^{*}g((d\pi_{\xi})_{*} \nu_{g}, (d\pi_{\xi})_{*} \nu_{g}) = g(a\partial_{\zeta}, (d\pi_{\xi})_{*} \nu_{g}) + \sum_{i} g(a^{i} \partial_{i}, (d\pi_{\xi})_{*} \nu_{g}) \\
    \Rightarrow & a = \begin{cases} g(\partial_{\zeta}, (d\pi_{\xi})_{*} \nu_{g})^{-1}, & (d\pi_{\xi})_{*} \nu_{g} \neq 0 \\ 0, & (d\pi_{\xi})_{*} \nu_{g} = 0 \end{cases}; \\
    1 & = \imath_{\xi}^{*}g(\nu_{\imath_{\xi}^{*}g}, \nu_{\imath_{\xi}^{*}g}) = g(b\partial_{\zeta}, \nu_{\imath_{\xi}^{*}g}) + \sum_{i} g(b^{i} \partial_{i}, \nu_{\imath_{\xi}^{*}g})  \Rightarrow b = g(\partial_{\zeta}, \nu_{\imath_{\xi}^{*}g})^{-1}.
\end{align*}
Since $ \partial_{\zeta}, (d\pi_{\xi})_{*} \nu_{g} $ and $ \nu_{\imath_{\xi}^{*}g} $ are globally defined along $ X_{\zeta, 0} $, we define
\begin{equation}\label{Pre:eqn0}
    V_{1} : = (d\pi_{\xi})_{*} \nu_{g} - a\partial_{\zeta}, V_{2} : = \nu_{\imath_{\xi}^{*}g} - b\partial_{\zeta}, V_{1}, V_{2} \in \Gamma(TX)
\end{equation}
such that locally $ V_{1} = \sum_{i = 1}^{n - 1} a^{i} \partial_{i} $ and $ V_{2} = \sum_{i = 1}^{n - 1} b^{i} \partial_{i} $. Again $ V_{1} = 0 $ when $ \nu_{g} \in \Gamma(T\R_{\xi}) $. We extend $ V_{1} $ and $ V_{2} $ to $ W $ with the pushforward map $ (d\pi_{2})_{*} $, and still denote them by $ V_{1} $ and $ V_{2} $ as global vector fields on $ W $.  When $ \nu_{g} \in \Gamma(TR_{\xi}) $, i.e. $ V_{1} \equiv 0 $, we apply the angle condition (\ref{intro:eqn1b}) and the same argument in $ X \times \R $ case applies, see \cite{XU11}. 

We begin with the ellipticity of the operator of our partial differential equation that will be introduced later. Assigning $ W $ the Riemannian metric $ \sigma^{*} \bar{g} = \sigma^{*}(g \oplus dt^{2}) $, we verify the ellipticity in the next lemma for the hard case, i.e. $ d(\pi_{\xi})_{*}g $ nowhere vanishes on $ X_{\xi, 0} $.
\begin{lemma}\label{Pre:lemma1} If $ (d\pi_{\xi})_{*}\nu_{g} $ never vanishes along $ X_{\zeta, 0} $, and
\begin{equation}\label{Pre:eqn1}
\frac{n - 1}{n - 2} \frac{\imath_{\xi}^{*}g(\partial_{\zeta},\partial_{\zeta})}{\imath_{\xi}^{*}g((d\pi_{\xi})_{*}\nu_{g},\partial_{\zeta})^{2}} + \frac{\imath_{\xi}^{*}g(\partial_{\zeta},\partial_{\zeta})}{\imath_{\xi}^{*}g(\nu_{\imath_{\xi}^{*}g},\partial_{\zeta})^{2}}<
2 + \frac{n - 1}{n - 2} \; {\rm on} \; X_{\zeta, 0},
\end{equation}
then the operator
\begin{equation*}
 L':=\frac{n - 1}{n - 2} \nabla_{V_{1}} \nabla_{V_{1}} + \nabla_{V_{2}}\nabla_{V_{2}} - \Delta_{\sigma^{*}\bar{g}}
\end{equation*}
is elliptic on $ W $.
\end{lemma}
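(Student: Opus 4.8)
The plan is to show that the second-order operator $L'$ has a negative-definite principal symbol on all of $W$, which is the definition of ellipticity. Since $L'$ is built from the covariant Hessians $\nabla_{V_1}\nabla_{V_1}$, $\nabla_{V_2}\nabla_{V_2}$ and the Laplacian $-\Delta_{\sigma^*\bar g}$, its principal symbol at a covector $\eta \in T_w^*W$ is the quadratic form
\[
  \sigma_2(L')(\eta) = \frac{n-1}{n-2}\,\langle V_1,\eta^\sharp\rangle^2 + \langle V_2,\eta^\sharp\rangle^2 - |\eta|_{\sigma^*\bar g}^2,
\]
where $\eta^\sharp$ is the metric dual and all inner products are taken with respect to $\sigma^*\bar g$. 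So the whole statement reduces to the pointwise linear-algebra inequality
\[
  \frac{n-1}{n-2}\,\langle V_1,v\rangle^2 + \langle V_2,v\rangle^2 < |v|^2 \quad\text{for all nonzero } v \in T_wW,
\]
which by Cauchy–Schwarz is implied by $\tfrac{n-1}{n-2}|V_1|^2 + |V_2|^2 < 1$ — but that is too crude; instead I would diagonalize the rank-$\le 2$ form $\tfrac{n-1}{n-2}V_1^\flat \otimes V_1^\flat + V_2^\flat\otimes V_2^\flat$ and bound its largest eigenvalue. The key observation is that $W = X\times \mathbb{S}^1_t$ and the relevant computation happens along $X_{\zeta,0}$, where $V_1, V_2$ lie in $\Gamma(TX)$ and are by construction the components of $(d\pi_\xi)_*\nu_g$ and $\nu_{\imath_\xi^*g}$ orthogonal to $\partial_\zeta$; thus $|V_1|^2 = 1 - a^2 g(\partial_\zeta,\partial_\zeta)$ and $|V_2|^2 = 1 - b^2 g(\partial_\zeta,\partial_\zeta)$ using the unit-length of $(d\pi_\xi)_*\nu_g$ and $\nu_{\imath_\xi^*g}$ in $\imath_\xi^*g$, together with the formulas $a = g(\partial_\zeta,(d\pi_\xi)_*\nu_g)^{-1}$, $b = g(\partial_\zeta,\nu_{\imath_\xi^*g})^{-1}$ already derived.

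The main steps, in order: (1) record that ellipticity of $L'$ is equivalent to the pointwise statement that $Q(v) := \tfrac{n-1}{n-2}\langle V_1,v\rangle^2 + \langle V_2,v\rangle^2$ satisfies $Q(v) < |v|^2$ for all $v\neq 0$, i.e.\ the operator norm of $Q$ (as a symmetric bilinear form against $\sigma^*\bar g$) is strictly less than $1$; (2) translate $|V_1|^2$ and $|V_2|^2$ into the angle quantities via the displayed identities, obtaining $\tfrac{n-1}{n-2}|V_1|^2 + |V_2|^2 = \tfrac{n-1}{n-2}\bigl(1 - \sec^{-2}\angle_{\imath_\xi^*g}((d\pi_\xi)_*\nu_g,\partial_\zeta)\bigr) + \bigl(1 - \sec^{-2}\angle_{\imath_\xi^*g}(\nu_{\imath_\xi^*g},\partial_\zeta)\bigr)$ — wait, more carefully, since $\sec^2\angle = g(\partial_\zeta,\partial_\zeta)/g(\cdot,\partial_\zeta)^2$ we get $a^2 g(\partial_\zeta,\partial_\zeta) = \sec^2\angle_{\imath_\xi^*g}((d\pi_\xi)_*\nu_g,\partial_\zeta)$ divided by nothing; I will need to be careful whether it is $\sec^2$ or $\cos^2$ that appears, and this is exactly where the hypothesis (\ref{Pre:eqn1}) enters; (3) bound the top eigenvalue of $Q$: for a sum of two rank-one forms $c_1 u_1^\flat\otimes u_1^\flat + c_2 u_2^\flat\otimes u_2^\flat$ the largest eigenvalue is at most $c_1|u_1|^2 + c_2|u_2|^2$ (the trace), so $Q(v)/|v|^2 \le \tfrac{n-1}{n-2}|V_1|^2 + |V_2|^2$ pointwise; (4) show that hypothesis (\ref{Pre:eqn1}) rearranges exactly to $\tfrac{n-1}{n-2}|V_1|^2 + |V_2|^2 < 1$, by moving the $\tfrac{n-1}{n-2}$ and the $2$ around: writing $s_1 = \sec^2\angle_1$, $s_2 = \sec^2\angle_2$, the hypothesis is $\tfrac{n-1}{n-2}s_1 + s_2 < 2 + \tfrac{n-1}{n-2}$, equivalently $\tfrac{n-1}{n-2}(1 - s_1) + (1 - s_2) > 0$ — hmm, that gives the wrong sign, so in fact what must be true is that $|V_i|^2$ corresponds to $1 - 1/s_i$ (since $\cos^2 = 1/\sec^2$ and the tangential part has squared length $1 - \cos^2\angle$ only if $\partial_\zeta$ is a unit vector, which it is not unless rescaled); (5) conclude that the trace bound is $<1$ and hence $Q(v) < |v|^2$ strictly for $v\neq 0$, so $\sigma_2(L')(\eta) < 0$ for $\eta\neq 0$, i.e.\ $L'$ is elliptic; (6) extend from $X_{\zeta,0}$ to all of $W = X\times\mathbb{S}^1_t$ using that $V_1, V_2$ were extended by the pushforward $(d\pi_2)_*$ and the hypothesis is assumed to hold along the whole fiber (or, since ellipticity is an open condition, it persists on a neighborhood and one arranges the construction to stay there).

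The hard part will be step (4): getting the algebra of the angle identities to line up so that the hypothesis (\ref{Pre:eqn1}) is precisely (not merely sufficient for) the trace bound $\tfrac{n-1}{n-2}|V_1|^2 + |V_2|^2 < 1$, since the factor $\imath_\xi^*g(\partial_\zeta,\partial_\zeta)$ in the definition of $\sec^2\angle$ and the fact that $\partial_\zeta$ is not a unit vector means one cannot naively write "tangential length squared $= 1 - \cos^2$". I expect one must carefully use $|V_1|^2 = \imath_\xi^*g((d\pi_\xi)_*\nu_g,(d\pi_\xi)_*\nu_g) - a^2\,\imath_\xi^*g(\partial_\zeta,\partial_\zeta) = 1 - \imath_\xi^*g(\partial_\zeta,\partial_\zeta)/\imath_\xi^*g((d\pi_\xi)_*\nu_g,\partial_\zeta)^2$ (via $a = 1/\imath_\xi^*g(\partial_\zeta,(d\pi_\xi)_*\nu_g)$), which is exactly $1 - \sec^2\angle_{\imath_\xi^*g}((d\pi_\xi)_*\nu_g,\partial_\zeta)$; then $\tfrac{n-1}{n-2}|V_1|^2 + |V_2|^2 = \bigl(\tfrac{n-1}{n-2} + 1\bigr) - \bigl(\tfrac{n-1}{n-2}s_1 + s_2\bigr)$, and this is $<1$ iff $\tfrac{n-1}{n-2}s_1 + s_2 > \tfrac{n-1}{n-2}$, which does not match (\ref{Pre:eqn1}) — so the genuine subtlety is that the trace bound is insufficient and one must instead diagonalize the $2\times 2$ Gram matrix of $\{V_1, V_2\}$ exactly, where the cross term $\langle V_1,V_2\rangle$ enters, and the sharp top-eigenvalue bound (a $2\times 2$ eigenvalue formula) is what produces the "$2 + \tfrac{n-1}{n-2}$" on the right-hand side of (\ref{Pre:eqn1}). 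Nailing that exact $2\times 2$ computation, and checking the cross term $\langle V_1, V_2\rangle$ does not spoil the inequality, is the crux of the proof.
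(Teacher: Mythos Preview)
Your overall strategy is right and essentially matches the paper: compute the principal symbol, reduce to a pointwise quadratic inequality on $T_wW$, and bound the rank-$\le 2$ form $Q(v)=\tfrac{n-1}{n-2}\langle V_1,v\rangle^2+\langle V_2,v\rangle^2$ by its trace. The paper does the same thing in normal coordinates via Sylvester's criterion and arrives at exactly your condition $\tfrac{n-1}{n-2}|V_1|^2+|V_2|^2<1$; your route through ``largest eigenvalue $\le$ trace'' is in fact the cleaner way to say it.

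The gap is purely in step~(4), and it is a sign error coming from a hidden orthogonality assumption. The splitting $(d\pi_\xi)_*\nu_g = V_1 + a\,\partial_\zeta$ is \emph{not} orthogonal: $\partial_\zeta$ need not be $\imath_\xi^*g$-perpendicular to $TX$, so you cannot write $|V_1|^2 = 1 - a^2\,\imath_\xi^*g(\partial_\zeta,\partial_\zeta)$. Expanding with the cross term and using $a\,\imath_\xi^*g((d\pi_\xi)_*\nu_g,\partial_\zeta)=1$ gives
\[
|V_1|^2 \;=\; 1 \;-\; 2a\,\imath_\xi^*g((d\pi_\xi)_*\nu_g,\partial_\zeta)\;+\;a^2\,\imath_\xi^*g(\partial_\zeta,\partial_\zeta)\;=\;1-2+s_1\;=\;s_1-1,
\]
and likewise $|V_2|^2=s_2-1$, where $s_i=\sec^2\angle_i\ge 1$. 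With the correct sign,
\[
\tfrac{n-1}{n-2}|V_1|^2+|V_2|^2 \;=\; \tfrac{n-1}{n-2}(s_1-1)+(s_2-1)\;<\;1
\quad\Longleftrightarrow\quad
\tfrac{n-1}{n-2}\,s_1+s_2 \;<\; 2+\tfrac{n-1}{n-2},
\]
which is precisely hypothesis~(\ref{Pre:eqn1}). So the trace bound is \emph{not} too crude: it is exactly the angle condition once $|V_i|^2$ is computed correctly, and no $2\times2$ diagonalization or cross-term $\langle V_1,V_2\rangle$ analysis is needed. Your step~(6) is fine since $V_1,V_2$ and $\sigma^*\bar g$ are $t$-independent on $W=X\times\mathbb{S}^1_t$.
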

\begin{proof}
In any local coordinate chart containing $ x_{0} \in X $,
 \begin{equation*}
    \nabla_{V_{1}}\nabla_{V_{1}} = \sum_{i, j = 1}^{n - 1} a^{i}(x) a^{j}(x) \frac{\partial^{2}}{\partial x^{i} \partial x^{j}} + G_{1}(x), \nabla_{V_{2}}\nabla_{V_{2}} = \sum_{i, j = 1}^{n - 1} b^{i}(x) b^{j}(x) \frac{\partial^{2}}{\partial x^{i} \partial x^{j}} + G_{2}(x)
\end{equation*}
where $ G_{1}(x), G_{2}(x) $ are linear first order operators. In Riemannian normal coordinates $(x^1,\ldots,x^{n-1})$ centered at a fixed $x_0\in X$, we have
\begin{equation*}
    \Delta_{\sigma^{*}\bar{g}}|_{x_{0}} = \sum_{i = 1}^{n - 1} \frac{\partial^{2}}{\partial \left( x^{i} \right)^{2} } + \frac{\partial^{2}}{\partial t^{2}}.
\end{equation*}
The principal symbol of $L'$ at $ x_{0} $ is
\begin{equation*}
 \sigma_2(L')(x_0, \xi) =   - \frac{n - 1}{n - 2} \sum_{i, j = 1}^{n - 1} a^{i}(x_{0})a^{j}(x_{0})- \sum_{i, j = 1}^{n - 1} b^{i}(x_{0})b^{j}(x_{0})
      \xi^{i} \xi^{j} 
     + \sum_{i = 1}^{n - 1} \left(\xi^{i} \right)^{2} + (\xi^n)^2.
\end{equation*}
$ \sigma_2(L')(x_0, \xi)>0$ for $\xi\neq 0$ implies the ellipticity of $ L' $, which is equivalent to show that
\begin{align*}
    C : & = - 
    \frac{n - 1}{n - 2} \begin{pmatrix} (a^{1})^{2} & a^{1}a^{2} &\dotso & a^{1}a^{n - 1} \\ a^{2}a^{1} &( a^{2})^{2} & \dotso & a^{2}a^{n - 1} \\ \vdots & \vdots & \ddots & \vdots \\ a^{n - 1}a^{1} & a^{n - 1}a^{2} & \dotso & (a^{n - 1})^{2} \end{pmatrix} -  \begin{pmatrix} (b^{1})^{2} & b^{1}b^{2} &\dotso & b^{1}b^{n - 1} \\ b^{2}b^{1} &( b^{2})^{2} & \dotso & b^{2}b^{n - 1} \\ \vdots & \vdots & \ddots & \vdots \\ b^{n - 1}b^{1} & b^{n - 1}b^{2} & \dotso & (b^{n - 1})^{2} \end{pmatrix} + I_{n - 1} \\
    & : = \frac{n - 1}{n - 2} A' + B' + I_{n - 1}
\end{align*}
is positive definite. Writing $ I_{n - 1} = C_{1} I_{n - 1} + C_{2} I_{n - 1} $ for any $ C_{1}, C_{2} \in (0, 1) $ with $ C_{1} + C_{2} = 1 $, it suffices to show that the two matrices
\begin{equation*}
    A: = \frac{n - 1}{n - 2} A' + C_{1} I_{n - 1}, B : = B' + C_{2} I_{n - 1}
\end{equation*}
are both positive definite. We work on $ B $ first. By Sylvester's criterion, it suffices to show that is positive definite iff all $ k \times k $ principal minors $ B_{k} $ of the symmetric matrix $ B $ have positive determinants. We claim that
\begin{equation}\label{Pre:eqn2}
 \det(B_{k}) = C_{2}^{k}\left(C_{2} + {\rm Tr}(B_{k}') \right) = C_{2}^{k}\left(C_{2} -  \sum_{i = 1}^{k} (b^{i})^{2} \right), k = 1, \dotso, n - 1.
\end{equation}
The following argument essentially comes from \cite[Lemma 2.1]{RX2}. We show this for $ B_{n-1} = B $, as the argument for the other $ B_{k} $ 
is identical. We first observe that if  $ b^{i} \neq 0, \forall i, $ then all rows of $ B' $ are proportional to each other. It follows that the kernel of $B'$ has dimension $ n - 2 $,  so $0$ is an eigenvalue of  $ B' $ of multiplicity $ n - 2 $. Since the sum of  the eigenvalues of $ B' $ is  the trace $- \sum_{i = 1}^{n - 1} (b^{i})^{2} $, 
the nontrivial eigenvalue must be $- \sum_{i = 1}^{n - 1} (b^{i})^{2} $.
Thus the eigenvalues of $B$ are 
\begin{equation*}
    \lambda_{1} = - \sum_{i = 1}^{n - 1} (b^{i})^{2} + C_{2}, \lambda_2 = \ldots = \lambda_{n-1} = C_{2},
\end{equation*}
which proves (\ref{Pre:eqn2}) by Jordan decomposition of symmetric matrix. If some  $ b^{i} $ vanish, then the claim reduces to a lower dimensional case by removing the corresponding  rows and columns of all zeros. 

Since the matrix $ A' $ is of the same form of $ B' $, the same argument implies that
\begin{equation}\label{Pre:eqn3}
 \det(A_{k}) = C_{1}^{k}\left(C_{1} + \frac{n - 1}{n - 2}{\rm Tr}(A_{k}') \right) = C_{1}^{k}\left(C_{1} -  \frac{n - 1}{n - 2} \sum_{i = 1}^{k} (a^{i})^{2} \right), k = 1, \dotso, n - 1
\end{equation}
where $ A_{k}'s $ are the $ k \times k $ principal minors of the symmetric matrix $ A $. Writing $ C_{2} = 1 - C_{1} $, $ A $ and $ B $ are positive definite, or equivalently the quanitites in (\ref{Pre:eqn2}) and (\ref{Pre:eqn3}) are all positive, if
\begin{equation}\label{Pre:eqn4}
    \frac{n - 1}{n - 2} \sum_{i = 1}^{n - 1}\left( a^{i} \right)^{2} < C_{1} \; {\rm and} \; \sum_{i = 1}^{n - 1} \left( b^{i} \right)^{2} < 1 - C_{1} \Leftrightarrow \frac{n - 1}{n - 2} \sum_{i = 1}^{n - 1}\left( a^{i} \right)^{2}  + \sum_{i = 1}^{n - 1} \left( b^{i} \right)^{2} < 1.
\end{equation}
In normal coordinates,
\begin{align*}
& \frac{n - 1}{n - 2} \sum_{i = 1}^{n - 1}\left( a^{i} \right)^{2}  + \sum_{i = 1}^{n - 1} \left( b^{i} \right)^{2} \\
& \qquad = \frac{n - 1}{n - 2} \lvert V_{1} \rvert_{\sigma^{*} \bar{g}}^{2} + \lvert V_{2} \rvert_{\sigma^{*} \bar{g}}^{2} \\
& \qquad = \frac{n - 1}{n - 2} \imath_{\xi}^{*}g \left((d\pi_{\xi})_{*}\nu_{g} - a \partial_{\zeta}, (d\pi_{\xi})_{*}\nu_{g} - a \partial_{\zeta} \right) + \imath_{\xi}^{*}g \left(\nu_{\imath_{\xi}^{*}g} - b \partial_{\zeta}, \nu_{\imath_{\xi}^{*}g} - b \partial_{\zeta} \right) \\
& \qquad = \frac{n - 1}{n - 2} \left( 1 - 2 + \frac{\imath_{\xi}^{*}g(\partial_{\zeta},\partial_{\zeta})}{\imath_{\xi}^{*}g((d\pi_{\xi})_{*}\nu_{g},\partial_{\zeta})^{2}} \right) + \left( 1 - 2 + \frac{\imath_{\xi}^{*}g(\partial_{\zeta},\partial_{\zeta})}{\imath_{\xi}^{*}g(\nu_{\imath_{\xi}^{*}g},\partial_{\zeta})^{2}} \right).
\end{align*}
Therefore (\ref{Pre:eqn4}) holds if and only if (\ref{Pre:eqn1}) holds, provided that $ (d\pi_{\xi})_{*}\nu_{g} $ nowhere vanish on $ X_{\zeta, 0} $.
\end{proof}
\begin{remark}\label{Pre:re1}
When $ (d\pi_{\xi})_{*}\nu_{g} \equiv 0 $ on $ X_{\xi, 0} $, the condition (\ref{Pre:eqn1}) degenerates to
\begin{equation*} \frac{\imath_{\xi}^{*}g(\partial_{\zeta},\partial_{\zeta})}{\imath_{\xi}^{*}g(\nu_{\imath_{\xi}^{*}g},\partial_{\zeta})^{2}} < 2 
\end{equation*}
for the operator $ L' = \nabla_{V_{2}}\nabla_{V_{2}} - \Delta_{\sigma^{*}\bar{g}} $ in $ W $. Equivalently, it says $ \angle_{\imath_{\xi}^{*}g}(\nu_{\imath_{\xi}^{*}g},\partial_{\zeta}) \in [0, \frac{\pi}{4}) $, as we set in \cite{RX2}, \cite{XU11}, \cite{XU10}. Consequently, our problem reduces exactly to the case of \cite[Theorem 3.1]{XU11}. From now on, we assume that $ (d\pi_{\xi})_{*}\nu_{g} $ never vanishes on $ X_{\xi, 0} $. 
\end{remark}
Next we construct an appropriate inhomogenous term of our partial differential equation, as the next lemma shows.
\begin{lemma}\label{Pre:lemma2} 
Fix $ p \in \mathbb{N} $,  $ C \gg 1 $ and any positive $ \delta \ll 1 $, there exists a positive smooth function $F:W\to\R$ and small enough constant $ 0 < \epsilon \ll 1 $ such that such that (i) $ [-\epsilon, \epsilon]_{t} \subset \Psi(U) $; (ii) $ F |_{X_{\zeta, 0} \times (-\frac{\epsilon}{2}, \frac{\epsilon}{2})_{t}} = C+1 $, (iii) $ F \equiv 0 $ outside $ X_{\zeta, 0} \times [-\epsilon, \epsilon]_{t} $; and (iv) $ \lVert  F  \rVert_{\calL^{p}(W, \sigma^{*}\bar{g})} < \delta $.
\end{lemma}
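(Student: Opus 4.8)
The plan is to take $F$ of the simplest possible shape, namely a one-variable bump in the $t$-direction rescaled by the constant $C+1$; the only real content of the lemma is then an elementary $\calL^{p}$ estimate that is made small by shrinking the support. First I would fix the coordinate chart $(U,\Psi)$ about $P\in\mathbb{S}^1$ from \eqref{intro:diagram}: since $\Psi(U)$ is an open neighbourhood of $0=\Psi(P)$, it contains some interval $(-\epsilon_{0},\epsilon_{0})_{t}$, and I will only ever take $\epsilon<\epsilon_{0}$, which takes care of (i). Next, by the standard mollifier construction I would pick $\chi\in\calC^{\infty}(\mathbb{S}^1)$ with $0\leqslant\chi\leqslant 1$, $\chi\equiv 1$ on $[-\tfrac{\epsilon}{2},\tfrac{\epsilon}{2}]_{t}$, and $\operatorname{supp}\chi\subset[-\epsilon,\epsilon]_{t}$, and set
\[
F(x,t):=(C+1)\,\chi(t),
\]
viewed as a function on $W=X\times\mathbb{S}^1_{t}$ via the projection onto the $\mathbb{S}^1_{t}$-factor. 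This $F$ is $\calC^{\infty}$ on all of $W$ (it is the pullback of a smooth function on $\mathbb{S}^1$), it is $\geqslant 0$, and properties (ii) and (iii) are immediate from the corresponding properties of $\chi$; here, as the statement of (iii) makes clear, ``positive'' is to be read as ``nonnegative and not identically zero''.

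For (iv) I would use that $\bar g=g\oplus dt^{2}$ is a Riemannian product with the $\mathbb{S}^1_{t}$-factor and that $\sigma$ fixes the point $(0,0)\in\R_{\zeta}\times\R_{\xi}$, so that $\sigma^{*}\bar g=h\oplus dt^{2}$, where $h:=\imath_{\zeta}^{*}\imath_{\xi}^{*}g$ is the metric induced on $X\cong X_{\zeta,0}$; consequently $\text{dvol}_{\sigma^{*}\bar g}=\text{dvol}_{h}\,dt$. Then, since $p\geqslant 1$ forces $\chi^{p}\leqslant\chi\leqslant 1$,
\[
\lVert F\rVert_{\calL^{p}(W,\sigma^{*}\bar g)}^{p}=(C+1)^{p}\Big(\int_{X}\text{dvol}_{h}\Big)\Big(\int_{\mathbb{S}^1}\chi^{p}\,dt\Big)\leqslant 2\epsilon\,(C+1)^{p}\operatorname{Vol}(X,h).
\]
Because $X$ is closed we have $\operatorname{Vol}(X,h)<\infty$, and because $p$, $C$, $\delta$ are fixed in advance it now suffices to shrink $\epsilon$ further so that, in addition to $\epsilon<\epsilon_{0}$, one has $2\epsilon\,(C+1)^{p}\operatorname{Vol}(X,h)<\delta^{p}$. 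With this choice of $\epsilon$ all of (i)--(iv) hold simultaneously.

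There is no genuine obstacle in this lemma; the two points that deserve a line of care are (a) that $F$ is smooth on the whole circle, which is exactly why $\chi$ is taken to be a mollified indicator rather than a piecewise-defined function, and (b) the splitting $\text{dvol}_{\sigma^{*}\bar g}=\text{dvol}_{h}\,dt$, which uses that $\bar g$ is an orthogonal product with $dt^{2}$ (the $\R^{2}$-directions having been killed by $\sigma$), so that the $t$-integral factors out of the $\calL^{p}$ norm. I would also note in passing that the same $F$ in fact works for every $p\in\mathbb{N}$ at once, once $\epsilon$ is chosen against the largest $p$ of interest; but the lemma only asserts it for the fixed $p$, so nothing further is required.
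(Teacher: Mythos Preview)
Your construction is correct and is exactly the approach the paper intends: the paper's own proof is a one-sentence sketch (``set a function large enough on the tubular neighbourhood $X_{\zeta,0}\times(-\tfrac{\epsilon}{2},\tfrac{\epsilon}{2})_t$, decay to zero fast enough by shrinking $\epsilon$'') together with a reference to \cite[Lemma~2.2]{XU11}, where precisely this $(C+1)\chi(t)$ bump is written out. Your verification of the product splitting $\sigma^{*}\bar g=\imath_{\zeta}^{*}\imath_{\xi}^{*}g\oplus dt^{2}$ and the resulting $\calL^{p}$ bound $2\epsilon(C+1)^{p}\operatorname{Vol}(X,h)$ fill in the details the paper omits, and your remark that ``positive'' must mean ``nonnegative, not identically zero'' (forced by condition (iii)) is the correct reading.
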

\begin{proof}
In a word, we first set a function that is large enough in the tubular neighborhood $ X_{\zeta, 0} \times \left(-\frac{\epsilon}{2}, \frac{\epsilon}{2} \right)_{t} $, and decay to zero fast enough by letting $ \epsilon $ to be small enough. Exactly the same argument follows from \cite[Lemma 2.2]{XU11}. 
\end{proof}
We define the $\calC^{1, \alpha} $-norm on $ W $ for any $ \alpha \in (0, 1) $  by fixing a chart cover $ W = \bigcup_{i} (U_{i}, \varphi_{i}) $ such that
\begin{equation*}
    \lVert u \rVert_{\calC^{1, \alpha}(W)} = \lVert u \rVert_{\calC^{0}(W)} + \sup_{i, k} \lVert \partial_{x_{i}^{k}}u \rVert_{\calC^{0}(U_{i})} + \sup_{i, k} \sup_{x \neq x', x, x' \in U_{i}} \frac{\left\lvert \partial_{x_{i}^{k}}u(x) - \partial_{x_{i}^{k}}u(x') \right\rvert}{\lvert x - x' \rvert^{\alpha}}
\end{equation*}
where $ \partial_{x_{i}^{k}} u $ are local representations in $ U_{i} $ with local coordinates $ \lbrace x_{i}^{1}, \dotso, x_{i}^{n - 1}, x_{i}^{n + 2} = t \rbrace $ and associated local frames $ \lbrace \partial_{x_{i}^{k}} \rbrace $. 

Let $ R_{\bar{g}} $ be the Riemannian scalar curvature of $(M \times \mathbb{S}^{1}, \bar{g}) $. We now introduce the partial differential equation in $ W $ and verify $ \calC_{1, \alpha} $-smallness of the solution. Such a solution will be used to construct our conformal factor. The next result is essentially the same as \cite[Proposition 2.1]{XU11}. 
\begin{lemma}\label{Pre:lemma3}
Let $ (M, g) = (X \times \R^{2}, g) $ be a noncompact complete manifold such $ R_{g} \geqslant \kappa_{0} > 0 $ for some $ \kappa_{0} \in \R $. Let $ (W, \sigma^{*} \bar{g}) $ be the closed manifold as above. Assume that (\ref{Pre:eqn1}) holds. For any positive constant $ \eta \ll 1 $, any positive constant $ C $, and any $ p > n = \dim W $, there exists an associated $ F $ and $ \delta $ in the sense of Lemma \ref{Pre:lemma2}, such that the following partial differential equation     
\begin{equation}\label{Pre:eqn5}
L u : = \frac{4(n - 1)}{n - 2}\nabla_{V_{1}}\nabla_{V_{1}}u + 4\nabla_{V_{2}} \nabla_{V_{2}} u - 4\Delta_{\sigma^{*} \bar{g}}  u + R_{\bar{g}} |_{W}u =
F  \; {\rm in} \; W
\end{equation}
admits a unique smooth solution $ u $ with
\begin{equation}\label{Pre:eqn6}
\lVert u \rVert_{\calC^{1, \alpha}(W)} < \eta
\end{equation}
for some $ \alpha \in (0, 1) $ such that $ \alpha \geqslant 1 - \frac{n}{p} $.
\end{lemma}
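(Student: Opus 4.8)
The plan is to follow the standard elliptic-PDE recipe on the closed manifold $W$, exactly in the spirit of \cite[Proposition 2.1]{XU11}, now adapted to the bi-Laplacian-type operator built from the two vector fields $V_1,V_2$. First I would observe that the operator
\[
L = \frac{4(n-1)}{n-2}\nabla_{V_1}\nabla_{V_1} + 4\nabla_{V_2}\nabla_{V_2} - 4\Delta_{\sigma^*\bar g} + R_{\bar g}|_W
\]
is, up to the positive factor $4$, the operator $L'$ of Lemma \ref{Pre:lemma1} plus a zeroth-order term; hence by Lemma \ref{Pre:lemma1} and the angle condition (\ref{Pre:eqn1}), $L$ is a second-order elliptic operator on the closed manifold $W$ with smooth coefficients. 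The zeroth-order coefficient $R_{\bar g}|_W = R_g|_{X_{\zeta,0}}$ satisfies $R_{\bar g}|_W \geqslant \kappa_0 > 0$ because $\bar g = g \oplus dt^2$ is a Riemannian product (so $R_{\bar g} = R_g$), and the bounded-curvature hypothesis gives a uniform upper bound on $R_{\bar g}$ as well. Positivity of the zeroth-order term lets me run Lax--Milgram: the bilinear form associated to $L$ is coercive on $H^1(W)$ (the leading part is elliptic, hence $\geqslant c\lVert u\rVert_{H^1}^2 - C\lVert u\rVert_{L^2}^2$ by Gårding, and the positive $R_{\bar g}|_W$ term absorbs the lower-order defect after a standard argument, or more simply one notes $L$ has trivial kernel because testing $Lu = 0$ against $u$ and integrating by parts gives $\int_W (\text{nonneg.}) + R_{\bar g}|_W u^2 = 0$, forcing $u \equiv 0$). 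Thus for $F \in L^2(W)$ there is a unique weak solution $u \in H^1(W)$, and since $F$ is smooth, $\calL^p$-elliptic regularity and then Schauder bootstrap give $u \in C^\infty(W)$.

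Next I would establish the $C^{1,\alpha}$-smallness (\ref{Pre:eqn6}). The point is that, by Lemma \ref{Pre:lemma2}, for any prescribed small $\delta$ we may choose $F$ (depending on the fixed $C$, on $p$, and on $\epsilon \ll 1$) with $\lVert F\rVert_{\calL^p(W,\sigma^*\bar g)} < \delta$, even though $F$ equals $C+1$ on a thin tube $X_{\zeta,0}\times(-\tfrac{\epsilon}{2},\tfrac{\epsilon}{2})_t$: the $L^p$ norm of such a bump is controlled by $(C+1)\cdot(\text{vol of tube})^{1/p} \lesssim (C+1)\epsilon^{1/p} \to 0$. By $\calL^p$-elliptic estimates for the elliptic operator $L$ on the closed manifold $W$,
\[
\lVert u\rVert_{W^{2,p}(W)} \leqslant C_1\big(\lVert F\rVert_{\calL^p(W)} + \lVert u\rVert_{\calL^p(W)}\big),
\]
and the uniqueness/invertibility from the previous step upgrades this to $\lVert u\rVert_{W^{2,p}(W)} \leqslant C_2 \lVert F\rVert_{\calL^p(W)} < C_2\delta$. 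Choosing $p > n = \dim W$, the Sobolev embedding $W^{2,p}(W)\hookrightarrow C^{1,\alpha}(W)$ with $\alpha = 1 - n/p$ (valid since $W$ is closed) yields $\lVert u\rVert_{C^{1,\alpha}(W)} \leqslant C_3\delta$; taking $\delta$ small enough relative to $\eta/C_3$ gives (\ref{Pre:eqn6}). The constants $C_1,C_2,C_3$ depend only on $W$, $\sigma^*\bar g$, the vector fields $V_1,V_2$, $R_{\bar g}|_W$, and $p$ — none on $F$ — which is what makes the argument close.

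The main obstacle, and the step deserving the most care, is the uniformity of the elliptic constants $C_1, C_2$: I must make sure they do not secretly depend on the specific $F$ chosen (they do not, since they come only from the operator $L$ and the fixed geometry of $W$), and that the coercivity constant in Lax--Milgram is likewise $F$-independent. A subtler point is that $W = X\times\mathbb S^1_t$ is noncompact a priori only through $X$ being closed — actually $W$ is compact, so global $\calL^p$ and Schauder theory apply without the weighted-space gymnastics needed on $M$ itself; this is precisely the reason for introducing the auxiliary circle, and I would emphasize it. One should also check that the coefficients of $\nabla_{V_1}\nabla_{V_1}$ and $\nabla_{V_2}\nabla_{V_2}$ are genuinely smooth on all of $W$: this follows from the standing assumption (Remark \ref{Pre:re1}) that $(d\pi_\xi)_*\nu_g$ never vanishes along $X_{\zeta,0}$, so $a = g(\partial_\zeta,(d\pi_\xi)_*\nu_g)^{-1}$ and $b = g(\partial_\zeta,\nu_{\imath_\xi^*g})^{-1}$ are smooth and the components $a^i, b^i$ of $V_1, V_2$ inherit smoothness, after extension to $W$ via $(d\pi_2)_*$. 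With those smoothness and uniformity points pinned down, the remainder is the routine Lax--Milgram $\Rightarrow$ $\calL^p$-regularity $\Rightarrow$ Sobolev-embedding chain, and the conclusion follows; the proof is essentially identical to \cite[Proposition 2.1]{XU11} with $\nabla_{V_2}\nabla_{V_2}$ replaced by $\tfrac{n-1}{n-2}\nabla_{V_1}\nabla_{V_1} + \nabla_{V_2}\nabla_{V_2}$.
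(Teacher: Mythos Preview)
Your proposal is correct and follows essentially the same route as the paper: ellipticity from Lemma~\ref{Pre:lemma1}, positivity of $R_{\bar g}|_W$, unique solvability on the closed manifold $W$, then $\calL^p$-regularity plus Sobolev embedding $W^{2,p}\hookrightarrow \calC^{1,\alpha}$ with the constants chosen before $F$ so that $\delta$ can be made small. The one place to tighten is the injectivity step: neither your G\aa rding argument (the defect constant need not be dominated by $\kappa_0$) nor your integration-by-parts argument (the terms $\int_W u\,\nabla_{V_i}\nabla_{V_i}u$ produce first-order corrections $\int_W u(\operatorname{div}V_i)(V_i u)$ that are not sign-definite) quite closes as written; the paper instead invokes the strong maximum principle for elliptic operators with strictly positive zeroth-order coefficient on a closed manifold, which immediately gives $\ker L=\{0\}$, and then Fredholm dichotomy for existence.
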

\begin{proof}
Since $ X $ is compact, (\ref{Pre:eqn1}) implies that the maximum of the continuous function
\begin{equation*}
    \frac{n - 1}{n - 2} \frac{\imath_{\xi}^{*}g(\partial_{\zeta},\partial_{\zeta})}{\imath_{\xi}^{*}g((d\pi_{\xi})_{*}\nu_{g},\partial_{\zeta})^{2}} + \frac{\imath_{\xi}^{*}g(\partial_{\zeta},\partial_{\zeta})}{\imath_{\xi}^{*}g(\nu_{\imath_{\xi}^{*}g},\partial_{\zeta})^{2}}
\end{equation*}
is also strictly less than $ 2 + \frac{n - 1}{n - 2} $, and hence the uniform ellipticity of the operator $ L $ is obtained by Lemma \ref{Pre:lemma1}. 

Since $ X $ is a closed manifold, so is $ W $. Fix $ \eta \ll 1, C $, and $ p > n $. We then fix some $ \alpha \in (0, 1) $ such that $ 1 + \alpha \geqslant 2 - \frac{n}{p} $. Denote $ C = C(W, g, n, p, L) $ by the constant of $ \calL^{p} $ elliptic regularity estimates with respect to $ L $, and $ C' = C'(W, g, n, p, \alpha) $ by the constant of the Sobolev embedding $ W^{2, p} \hookrightarrow \calC^{1, \alpha} $ over closed manifolds. Fix $ \delta $ such that $ \delta C C' < \eta $. Finally, we choose an associated $ F $ in the sense of Lemma \ref{Pre:lemma2}.

By assumption, $ R_{g} \geqslant \kappa_{0} > 0 $ on $ M $ for some positive constant $ \kappa_{0} $, therefore $ R_{\bar{g}} > 0 $ on $ M \times \mathbb{S}^{1}_{t} $, hence $ R_{\bar{g}} |_{W} > 0 $ uniformly. By the maximum principle for closed manifolds, the elliptic operator $ L $ is an injective operator. It follows that there exists a unique solution $ u \in H^{1}(W, \sigma^{*}\bar{g}) $ of (\ref{Pre:eqn5}) by Fredholm dichotomy. By standard $ H^{s} $-type elliptic theory and the smoothness of $ F $, $ u \in \calC^{\infty}(W) $. 

By standard $ \calL^{p} $-regularity theory \cite{Niren4}, 
\begin{equation*}
    \lVert u \rVert_{W^{2, p}(W, \sigma^{*} \bar{g})} \leqslant C \left( \lVert F \rVert_{\calL^{p}(W, \sigma^{*} \bar{g})} + \lVert u \rVert_{\calL^{p}(W, \sigma^{*}\bar{g})} \right).
\end{equation*}
Due to the injectivity of the operator, a very similar argument of \cite[Proposition 2.1]{RX2} shows that $ \lVert u \rVert_{\calL^{p}(W, \sigma^{*} \bar{g})} $ can be bounded above by $ \lVert Lu \rVert_{\calL^{p}(W, \sigma^{*}\bar{g})} $. It follows that the $ \calL^{p}$ estimates of our solution of (\ref{Pre:eqn5}) can be improved by
\begin{equation*}
     \lVert u \rVert_{W^{2, p}(W, \sigma^{*} \bar{g})} \leqslant C \lVert F \rVert_{\calL^{p}(W, \sigma^{*} \bar{g})}.
\end{equation*}
By Sobolev embedding, it follows that
\begin{equation*}
    \lVert u \rVert_{\calC^{1, \alpha}(W)} \leqslant C' \lVert u \rVert_{W^{2, p}(W, \sigma^{*}\bar{g})} \leqslant CC'\lVert F \rVert_{\calL^{p}(W, \sigma^{*} \bar{g})} < CC' \delta < \eta. 
\end{equation*}
\end{proof}
\begin{remark}\label{Pre:re0}
Note that $ V_{1} \equiv 0 $ if  $ d(\pi_{\xi})_{*}\nu_{g} \equiv 0 $ on $ X_{\xi, 0} $. In this case, our partial differential equation (\ref{Pre:eqn5}) is reduced to
\begin{equation*}
    4\nabla_{V_{2}} \nabla_{V_{2}} u - 4\Delta_{\sigma^{*}\bar{g}} + R_{\bar{g}} |_{W} u = F \; {\rm in} \; W,
\end{equation*}
which is associated to the reduced angle condition in Remark \ref{Pre:re1}. The same $ \calC^{1, \alpha} $-smallness in Lemma \ref{Pre:lemma3} for the solution of this revised partial differential equation are given in \cite[Proposition 2.1]{XU11}.
\end{remark}
We now construct our conformal factor on $ M $. Denote by $ X \times \R_{\zeta} = Y $. First of all, we can choose $ \eta \ll 1 $ such that
\begin{equation*}
    u_{W} : = u + 1 \in \left[ \frac{1}{2}, \frac{3}{2} \right] \; {\rm on} \; W.
\end{equation*}
Recall the diagram (\ref{intro:diagram}) here and an analogous one with respect to $ Y = X \times \R_{\zeta} $:
\begin{equation}\label{Pre:diagram}
\begin{tikzcd} (M, g) \arrow[r, "\tau_{1}"] & ( M \times \mathbb{S}^{1}, \bar{g}) \arrow[d, shift left = 1.5ex, "\pi"] \\
(X, \imath_{\zeta}^{*}\imath_{\xi}^{*}g) \arrow[r,"\tau_{2}"]\arrow[u,"\jmath"] &  (W, \sigma^{*}(\bar{g})) 
\arrow[u,"\sigma"]
\end{tikzcd}, 
\begin{tikzcd} (Y, g) \arrow[r, "\tau_{1}'"] & ( Y \times \mathbb{S}^{1}, \imath_{\xi}^{*}g \oplus dt^{2}) \arrow[d, shift left = 1.5ex, "\pi'"] \\
(X, \imath_{\zeta}^{*}\imath_{\xi}^{*}g) \arrow[r,"\tau_{2}"]\arrow[u,"\imath_{\zeta}"] &  (W, \sigma^{*}(\bar{g})) 
\arrow[u,"\sigma'"]
\end{tikzcd}.
\end{equation}
Note that $ \jmath, \pi, \sigma, \pi', \sigma' $ defined in (\ref{Pre:diagram}) satisfy $ \jmath = \imath_{\xi} \circ \imath_{\zeta} $, $ \pi = (\pi_{\zeta} \circ \pi_{\xi}) \times Id_{\mathbb{S}_{t}^{1}} $, $ \sigma = \jmath \times Id_{\mathbb{S}_{t}^{1}} $, $ \pi' = \pi_{\xi} \times Id_{\mathbb{S}_{t}^{1}} $, and $ \sigma' = \imath_{\zeta} \times Id_{\mathbb{S}_{t}^{1}} $. We define
\begin{equation}\label{Pre:eqn7}
\begin{split}
\hat{u} & : = \pi^{*}u_{W} : M \times \mathbb{S}^{1} \rightarrow \R, u_{M} : = \tau_{1}^{*} \hat{u} : M \rightarrow \R, u_{X} : = \tau_{2}^{*} u_{W} : X \rightarrow \R; \\
\hat{u}' & : = (\pi')^{*} u_{W} : Y \times \mathbb{S}^{1} \rightarrow \R, u_{Y} : = (\tau_{1}')^{*} \hat{u}' : Y \rightarrow \R.
\end{split}
\end{equation}
By the commutative diagram (\ref{Pre:diagram}), we have
\begin{equation}\label{Pre:eqn8}
\sigma^{*} \hat{u} = u_{W}, (\sigma')^{*} \hat{u}' = u_{W}, u_{Y} = \imath_{\xi}^{*} u_{M}, u_{X} = \tau_{2}^{*} u_{W} = \tau_{2}^{*} \sigma^{*} \hat{u} = \jmath^{*} u_{M} = \jmath^{*} \tau_{1}^{*} \hat{u} = \imath_{\zeta}^{*} u_{Y}.
\end{equation}
Recall that $ n - 1 = \dim_{\R} X \geqslant 2 $. The function $ u_{M}^{\frac{4}{n - 2}} $ will be used as the conformal factor of the original metric $ g $, where $ u_{Y}^{\frac{4}{n -2}} $ will be used as the conformal factor of the metric $ \imath_{\xi}^{*}g $ correspondingly.

For later use, we need to pullback $ u $ to $ M \times \mathbb{S}^{1} $ also. Define
\begin{equation*}
    u_{1} : = \pi^{*}u : M \times \mathbb{S}^{1} \rightarrow \R.
\end{equation*}
Fixing a sufficiently large closed interval $ I_{\xi} = [-k, k]_{\xi} $ such that $ [-1, 1] \subset I_{\xi} \in \R_{\xi} $, we define a smooth, nonnegative function
\begin{equation*}
    \varphi : \R_{\xi} \rightarrow \R, \varphi \geqslant 0 \; {\rm on} \; R_{\xi}, \varphi(\xi) = 1 \; {\rm on} \; [-1, 1], \varphi(\xi) \equiv 0 \; {\rm on} \; \R_{\xi} \backslash I_{\xi}.
\end{equation*}
Similarly, we define another smooth, nonnegative function with $ I_{\zeta} = [-k, k]_{\zeta} $
\begin{equation*}
    \varphi' : \R_{\zeta} \rightarrow \R, \varphi' \geqslant 0 \; {\rm on} \; R_{\zeta}, \varphi'(\zeta) = 1 \; {\rm on} \; [-1, 1], \varphi'(\zeta) \equiv 0 \; {\rm on} \; \R_{\zeta} \backslash I_{\zeta}.
\end{equation*}
With natural projections $ \Pi_{1} : M \times \mathbb{S}_{t}^{1} \rightarrow \R_{\xi} $ and $ \Pi_{2} : M \times \mathbb{S}_{t}^{1} \rightarrow \R_{\zeta} $, we obtain the pullback functions
\begin{equation}\label{Pre:eqn9}
    \Phi:= \Pi_{1}^{*}\varphi : M \times \mathbb{S}_{t}^{1} \rightarrow \R, \Phi' : = \Pi_{2}^{*} \varphi' : M \times \mathbb{S}_{t}^{1} \rightarrow \R.
\end{equation}
such that $ \Phi \geqslant 0 $ on $ M \times \mathbb{S}_{t}^{1} $, $ \text{supp}(\Phi) \subset X \times \R_{\zeta} \times \mathbb{S}_{t}^{1} \times I_{\xi} $, $ \Phi' \geqslant 0 $ on $ M \times \mathbb{S}_{t}^{1} $, $ \text{supp}(\Phi') \subset X \times \R_{\xi} \times \mathbb{S}_{t}^{1} \times I_{\zeta} $, $ \Phi \cdot \Phi' $ is compactly supported, and $ \Phi \cdot \Phi' = 1 $ on each hypersurface $ W \times \lbrace \xi \rbrace \times \lbrace \zeta \rbrace = X \times \mathbb{S}_{t}^{1} \times \lbrace \xi \rbrace \times \lbrace \zeta \rbrace $ when $ \xi \in [-1, 1]_{\xi}, \zeta \in [-1, 1]_{\zeta} $. 
Define
\begin{equation*}
    \bar{u} : = \Phi \cdot \Phi' \cdot u_{1} : M \times \mathbb{S}^{1} \rightarrow \R.
\end{equation*}
By (\ref{Pre:eqn9}), we observe that
\begin{equation*}
    \bar{u} |_{W} = u.
\end{equation*}
\begin{remark}\label{Pre:re2}
With these setup, we observe that $ \hat{u} $ is constant with respect to both $ \xi $- and $ \zeta $-variables, $ \hat{u}' $ is constant with respect to $ \zeta $-variable. It follows that all orders of both $ \xi $- and $ \zeta $-derivatives of $ \hat{u} $ vanish, and all orders of $ \zeta $-derivatives of $ \hat{u}' $ vanish.
\end{remark}

With the introduction of the auxiliary space $ \mathbb{S}_{t}^{1} $, extra terms like $ \frac{\partial^{2} u}{\partial t^{2}} $ arise. But the conformal transformation of Riemannian scalar curvature in $ M $ should not contain this. It follows that we need to take a partial $ \calC^{2} $-estimate to know the largeness of $ \frac{\partial^{2} u}{\partial t^{2}} $. Prior to the partial $ \calC^{2} $-estimate, we need to introduce a global notion of positivity, which will be used directly to control the $ \calC^{0} $-norm of $ \frac{\partial^{2} u}{\partial t^{2}} $ in a tubular neighborhood of $ X \times \lbrace 0 \rbrace_{t} \subset W $.

Let $ (N, \hat{g}) $ be any noncompact manifold with some Riemannian metric $ \hat{g} $ and Riemannian scalar curvature $ R_{\hat{g}} $, $ \dim_{\R} N = n $. We define
\begin{equation*}
E_{\hat{g}}(v) = \frac{\frac{4(n -1)}{n - 2} \int_{N} \lvert \nabla_{\hat{g}} v \rvert^{2} \dvoll + \int_{N} R_{\hat{g}} v^{2} \dvoll}{\left( \int_{N} v^{\frac{2n}{n - 2}} \dvoll \right)^{\frac{n-2}{n}}}, \forall v \in \calC_{c}^{\infty}(N) \backslash \lbrace 0 \rbrace.
\end{equation*}
Analogous to the compact case, the {\it{Yamabe constant}} of the conformal class $ [\hat{g}] $ on $ N $ is defined by
\begin{equation*}
    Y(N, \hat{g}) = \inf_{v \in \calC_{c}^{\infty}(N) \backslash \lbrace 0 \rbrace} E_{\hat{g}}(v).
\end{equation*}
It is known \cite[Lemma 2.1]{Wei} that for any complete noncompact manifold $ (N, \hat{g}) $, we have
\begin{equation}\label{Pre:eqn10}
    -\lVert (R_{\hat{g}})_{-} \rVert_{\calL^{\frac{n}{2}}(N, \hat{g})} \leqslant Y(N, \hat{g}) \leqslant \frac{4(n - 1)}{n -2}\Lambda
\end{equation}
where $ (R_{\hat{g}})_{-} $ is the negative part of the scalar curvature on $ N $, and $ \Lambda $ is the best Sobolev constant on $ \R^{n} $. In addition, we denote the infimum of the $ \calL^{2} $-spectrum of the conformal Laplacian on $ (N, \hat{g}) $ by
\begin{equation*}
    \mu(N, \hat{g}) = \inf_{v \in \calC_{c}^{\infty}(N) \backslash \lbrace 0 \rbrace} \frac{\frac{4(n -1)}{n - 2} \int_{N} \lvert \nabla_{\hat{g}} v \rvert^{2} \dvoll + \int_{N} R_{\hat{g}} v^{2} \dvoll}{\int_{N} v^{2} \dvoll}.
\end{equation*}
The next result is essentially due to Grosse \cite[Lemma 7]{Grosse}.
\begin{lemma}\label{Pre:lemma4}
If $ (N, \hat{g}) $ is a noncompact Riemannian manifold of bounded curvature with $ R_{\hat{g}} \geqslant \kappa_{0} > 0 $ for some $ \kappa_{0} \in \R $, then $ \mu(N, \hat{g}) > 0 $. Consequently, $ Y(N, \hat{g}) > 0 $.
\end{lemma}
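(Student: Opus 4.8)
The plan is to treat the two assertions separately and to pin down exactly where the bounded--curvature hypothesis is used. For the bound $\mu(N,\hat{g})>0$ I would argue directly from the Rayleigh quotient defining $\mu$: since $R_{\hat{g}}\geqslant\kappa_{0}>0$ pointwise and the Dirichlet term is nonnegative, every $v\in\calC_{c}^{\infty}(N)\setminus\{0\}$ satisfies
\[
\frac{4(n-1)}{n-2}\int_{N}\lvert\nabla_{\hat{g}}v\rvert^{2}\,\dvoll+\int_{N}R_{\hat{g}}v^{2}\,\dvoll\;\geqslant\;\kappa_{0}\int_{N}v^{2}\,\dvoll,
\]
so $\mu(N,\hat{g})\geqslant\kappa_{0}>0$. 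Note that this step uses only the uniform positivity of the scalar curvature, not the control on injectivity radius or sectional curvature.

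For $Y(N,\hat{g})>0$ the bounded--curvature hypothesis enters through a uniform Sobolev inequality. Since $(N,\hat{g})$ is of bounded curvature in the sense of Aubin --- complete, with positive injectivity radius and uniformly bounded sectional curvature --- the Sobolev embedding theorem on complete manifolds of bounded geometry \cite{Aubin} provides a constant $C_{S}=C_{S}(N,\hat{g})>0$, depending only on the geometry, with
\[
\left(\int_{N}v^{\frac{2n}{n-2}}\,\dvoll\right)^{\frac{n-2}{n}}\;\leqslant\;C_{S}\left(\int_{N}\lvert\nabla_{\hat{g}}v\rvert^{2}\,\dvoll+\int_{N}v^{2}\,\dvoll\right),\qquad v\in\calC_{c}^{\infty}(N).
\]
Setting $c_{0}=\min\{\tfrac{4(n-1)}{n-2},\kappa_{0}\}$ and combining with $R_{\hat{g}}\geqslant\kappa_{0}$, every $v\in\calC_{c}^{\infty}(N)\setminus\{0\}$ obeys
\[
\frac{4(n-1)}{n-2}\int_{N}\lvert\nabla_{\hat{g}}v\rvert^{2}\,\dvoll+\int_{N}R_{\hat{g}}v^{2}\,\dvoll\;\geqslant\;c_{0}\left(\int_{N}\lvert\nabla_{\hat{g}}v\rvert^{2}\,\dvoll+\int_{N}v^{2}\,\dvoll\right)\;\geqslant\;\frac{c_{0}}{C_{S}}\left(\int_{N}v^{\frac{2n}{n-2}}\,\dvoll\right)^{\frac{n-2}{n}},
\]
hence $E_{\hat{g}}(v)\geqslant c_{0}/C_{S}$ for all such $v$, and therefore $Y(N,\hat{g})\geqslant c_{0}/C_{S}>0$. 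Equivalently, I could invoke Grosse \cite[Lemma 7]{Grosse} verbatim: on a manifold of bounded geometry, positivity of the bottom $\mu(N,\hat{g})$ of the $\calL^{2}$--spectrum of the conformal Laplacian already forces $Y(N,\hat{g})>0$, the proof being the same interpolation between the coercivity estimate (the numerator of $E_{\hat{g}}$ is $\geqslant\mu(N,\hat{g})\int_{N}v^{2}\,\dvoll$) and the uniform Sobolev inequality; that more careful version is the one needed when $R_{\hat{g}}$ is merely bounded below rather than uniformly positive.

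I do not anticipate a genuine obstacle: the lemma is essentially a repackaging of Aubin's Sobolev embedding on manifolds of bounded geometry together with the elementary Rayleigh-quotient bound on $\mu$. The only points requiring care are (i) quoting the uniform Sobolev inequality with the correct critical exponent $\tfrac{2n}{n-2}$ and with both terms $\int_{N}\lvert\nabla_{\hat{g}}v\rvert^{2}$ and $\int_{N}v^{2}$ on the right-hand side, and (ii) checking that $C_{S}$, and hence the resulting lower bound for $Y(N,\hat{g})$, depends only on the bounded-geometry data of $(N,\hat{g})$ and not on the individual test function --- which is precisely the uniformity built into Aubin's theorem. I would also remark that the general bound \eqref{Pre:eqn10} alone only yields $Y(N,\hat{g})\geqslant 0$ when $R_{\hat{g}}>0$, so the strict positivity genuinely requires the uniform Sobolev inequality above rather than just the sign of the scalar curvature.
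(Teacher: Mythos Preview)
Your proposal is correct and follows essentially the same approach as the paper: the paper's proof simply refers to \cite[Lemma~2.3]{XU11} and notes that the bounded--curvature hypothesis yields the continuity of the Sobolev embedding via \cite[Theorem~2.21]{Aubin}, which is exactly the mechanism you spell out (direct Rayleigh-quotient bound for $\mu$, then uniform Sobolev inequality on manifolds of bounded geometry to pass to $Y$). Your write-up is in fact more detailed than the paper's, and your remark that the strict positivity of $Y$ genuinely requires the uniform Sobolev inequality rather than merely \eqref{Pre:eqn10} is a helpful clarification.
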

\begin{proof}
Exactly the same argument follows from \cite[Lemma 2.3]{XU11}. The bounded curvature assumption implies the continuity of the Sobolev embedding \cite[Theorem 2.21]{Aubin}.
\end{proof}
We now apply the positivity of the Yamabe constant (\ref{Pre:eqn10}) to estimate $ \left\lVert \frac{\partial^{2} u}{\partial t^{2}} \right\rVert_{\calC^{0}\left(X \times \left( -\frac{\epsilon}{4}, \frac{\epsilon}{4} \right)_{t} \right) } $ with the same $ \epsilon $ given in Lemma \ref{Pre:lemma2}. The key is that the positivity of the Yamabe constant is invariant under conformal transformation, and therefore invariant under scaling. At begin with, we know nothing about the positivity of the Yamabe constant in $ W $. But the positive lower bound of the Yamabe quotient with respect to $ \bar{u} $ can be ``transferred" to the Yamabe quotient with respect to $ u $ in $ W $. By (\ref{Pre:eqn7}) and (\ref{Pre:eqn8}), the same estimate applies to $ u_{W} |_{X \times \lbrace 0 \rbrace_{\xi} \times \lbrace 0 \rbrace_{\zeta}} $ and hence $ u_{M} |_{X \times \lbrace 0 \rbrace_{\xi} \times \lbrace 0 \rbrace_{\zeta}} $.
\begin{lemma}\label{Pre:lemma5}
Choosing $ \epsilon $ defined in Lemma \ref{Pre:lemma2} to be small enough that will be determined below. Assume that $ (M, g) $ is of bounded curvature such that $ R_{g} \geqslant \kappa_{0} > 0 $ for some $ \kappa_{0} \in \R $. Let $ (W, \sigma^{*}\bar{g}) $ be the associated space defined in (\ref{Pre:diagram}). Let $ p, C, \delta $ and $ F $ be the same as in Lemma \ref{Pre:lemma3}. If $ u $ is the associated solution of (\ref{Pre:eqn5}), then there exists a constant $ \eta' \ll 1 $ such that
\begin{equation}\label{Pre:eqn11}
    \left\lVert \frac{\partial^{2} u}{\partial t^{2}} \right\rVert_{\calC^{0}\left(X \times \lbrace 0 \rbrace_{\xi} \times \lbrace 0 \rbrace_{\zeta} \times \left(-\frac{\epsilon}{4}, \frac{\epsilon}{4} \right)_{t} \right)} < \eta'.
\end{equation}
\end{lemma}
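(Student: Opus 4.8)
The plan is to exploit the Riemannian product structure of $ (W, \sigma^{*}\bar{g}) $ in the $ t $-direction, reduce the desired bound to an interior estimate for a \emph{homogeneous} equation on a thin slab, and use the positivity supplied by Lemma \ref{Pre:lemma4} to keep the constants in that estimate uniform as $ \epsilon \to 0 $. First I would record the structural splitting. Since $ \sigma^{*}\bar{g} = (\imath_{\zeta}^{*}\imath_{\xi}^{*}g) \oplus dt^{2} $ is a Riemannian product, $ \Delta_{\sigma^{*}\bar{g}} = \Delta_{\imath_{\zeta}^{*}\imath_{\xi}^{*}g} + \partial_{t}^{2} $, and since $ V_{1}, V_{2} \in \Gamma(TX) $ are extended $ t $-independently after (\ref{Pre:eqn0}), the operator in (\ref{Pre:eqn5}) is $ L = Q - 4\partial_{t}^{2} + R_{\bar{g}}|_{W} $, where $ Q := \frac{4(n-1)}{n-2}\nabla_{V_{1}}\nabla_{V_{1}} + 4\nabla_{V_{2}}\nabla_{V_{2}} - 4\Delta_{\imath_{\zeta}^{*}\imath_{\xi}^{*}g} $ acts only in the $ X $-directions and $ R_{\bar{g}}|_{W} $ is independent of $ t $. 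On the slab $ X \times (-\tfrac{\epsilon}{2}, \tfrac{\epsilon}{2})_{t} $ we have $ F \equiv C+1 $ by Lemma \ref{Pre:lemma2}(ii), so the right-hand side of (\ref{Pre:eqn5}) is constant in $ t $ there; differentiating (\ref{Pre:eqn5}) twice in $ t $, and using that $ Q $ and $ R_{\bar{g}}|_{W} $ commute with $ \partial_{t} $, shows that $ w := \partial_{t}^{2}u $ is a smooth solution of $ L w = 0 $ on $ X \times (-\tfrac{\epsilon}{2}, \tfrac{\epsilon}{2})_{t} $.

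Next I would transfer positivity. By Lemma \ref{Pre:lemma3} we have $ \lVert u \rVert_{\calC^{1,\alpha}(W)} < \eta $ together with $ \lVert u \rVert_{W^{2,p}(W,\sigma^{*}\bar{g})} \leqslant C C' \delta $, so $ w $ is small in $ \calL^{p}(W,\sigma^{*}\bar{g}) $; but a plain interior estimate for $ L w = 0 $ on the slab loses a negative power of $ \epsilon $, and Lemma \ref{Pre:lemma2} forces $ \epsilon \ll \delta^{p} \ll \eta $, so this is not enough by itself. Here Lemma \ref{Pre:lemma4} enters: the Yamabe constant of the compact manifold $ W $ is not controlled, but $ (M\times\mathbb{S}^{1},\bar{g}) $ is of bounded curvature with $ R_{\bar{g}} = R_{g} \geqslant \kappa_{0} > 0 $, so $ \mu(M\times\mathbb{S}^{1},\bar{g}) > 0 $ and $ Y(M\times\mathbb{S}^{1},\bar{g}) > 0 $. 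Extending functions on $ W $ to the compactly supported functions $ \Phi\Phi'\pi^{*}(\cdot) $ on $ M\times\mathbb{S}^{1} $ by the cut-offs (\ref{Pre:eqn9}) — which equal $ 1 $ near $ W $ — and comparing the two energies slice-by-slice in the $ (\xi,\zeta) $-directions, the positive lower bound $ \mu(M\times\mathbb{S}^{1},\bar{g}) $ transfers to a coercivity estimate for $ L $ on $ W $ with a constant independent of the auxiliary parameters $ C,\delta,\epsilon,\eta $. Because the Yamabe quotient is scale-invariant, this coercivity survives the rescaling $ s = t/\epsilon $, under which $ L $ becomes $ \epsilon^{2}Q - 4\partial_{s}^{2} + \epsilon^{2}R_{\bar{g}}|_{W} $ on the fixed slab $ X\times(-\tfrac12,\tfrac12)_{s} $.

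Then I would run the interior estimate in the rescaled variables: $ \tilde{w}(x,s) := w(x,\epsilon s) $ solves the rescaled homogeneous equation on $ X\times(-\tfrac12,\tfrac12)_{s} $, whose forcing vanishes identically on the central slab $ |s| < \tfrac14 $, while the remaining data is controlled through the transferred coercivity. The $ \calL^{p} $-regularity and Sobolev embedding estimates — now with $ \epsilon $-independent constants — yield $ \lVert \tilde{w} \rVert_{\calC^{0}(X\times(-\tfrac14,\tfrac14)_{s})} < \eta' $ for a constant $ \eta' \ll 1 $ pinned down by the choices made in Lemma \ref{Pre:lemma3}; undoing the rescaling is exactly (\ref{Pre:eqn11}). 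Finally, by (\ref{Pre:eqn7})--(\ref{Pre:eqn8}) the same $ \calC^{0} $-bound passes from $ u $ to $ u_{W}|_{X\times\{0\}_{\xi}\times\{0\}_{\zeta}} $, and hence to $ u_{M}|_{X\times\{0\}_{\xi}\times\{0\}_{\zeta}} $, as remarked just before the statement.

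The main obstacle is precisely this uniformity in $ \epsilon $. Because $ \calC^{1,\alpha} $-smallness of $ u $ carries no $ \calC^{2} $-information, and $ \epsilon $ is forced to be far smaller than $ \eta $, every estimate obtained by plain interior regularity on the thin slab degenerates like $ \epsilon^{-1} $ or $ \epsilon^{-2} $; the entire purpose of Lemma \ref{Pre:lemma4}, together with the scale-invariance of the Yamabe functional, is to furnish the $ \epsilon $-independent coercivity that cancels this degeneration. Making the slice-by-slice comparison rigorous — that is, checking that it genuinely transfers $ \mu(M\times\mathbb{S}^{1},\bar{g}) > 0 $ into a uniform lower bound for $ L $ on $ W $, while controlling the error terms produced by $ \nabla(\Phi\Phi') $ and by the non-product part of $ g $ — is the technical heart of the proof; as the surrounding text indicates, this step parallels the corresponding estimate in \cite{XU11}.
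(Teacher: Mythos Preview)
Your outline is correct and matches the paper's proof: differentiate (\ref{Pre:eqn5}) twice in $t$ to get a homogeneous equation for $\partial_{t}^{2}u$ on the slab, rescale $t\mapsto t/\epsilon$, and use the scale-invariant Yamabe constant of $(M\times\mathbb{S}^{1},\bar{g})$ --- positive by Lemma~\ref{Pre:lemma4} --- applied to the cutoff extension $\bar{u}=\Phi\Phi'\,\pi^{*}u$ to obtain $\epsilon$-independent control. The paper implements this via an $H^{s}$-chain and applies the Yamabe inequality to the critical $\calL^{2(n+2)/n}$-norm of $u$ (not of $w$), feeding it through a H\"older step into the local $H^{2}$-estimate for the original equation $Lu=F$; your phrase ``coercivity for $L$ on $W$'' is a slight loosening of this mechanism, but the strategy and the final power-counting are the same.
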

\begin{proof}
Although we need to deal with the analysis in $ X \times \R^{2} \times \mathbb{S}^{1} $ and $ X \times \mathbb{S}^{1} $, our argument is essentially due to \cite[Lemma 2.4]{XU11}. Set $ U_{1} = X \times \lbrace 0 \rbrace_{\xi} \times \lbrace 0 \rbrace_{\zeta} \times \left(-\frac{\epsilon}{4}, \frac{\epsilon}{4} \right)_{t} $, $ U_{2} =  X \times \lbrace 0 \rbrace_{\xi} \times \lbrace 0 \rbrace_{\zeta} \times \left(-\frac{\epsilon}{2}, \frac{\epsilon}{2} \right)_{t} $, $ O_{1} =  X \times \lbrace 0 \rbrace_{\xi} \times \lbrace 0 \rbrace_{\zeta} \times \left(-\frac{1}{4}, \frac{1}{4} \right) $, $ O_{2} =  X \times \lbrace 0 \rbrace_{\xi} \times \lbrace 0 \rbrace_{\zeta} \times \left(-\frac{1}{2}, \frac{1}{2} \right) $, $ O_{3} =  X \times \lbrace 0 \rbrace_{\xi} \times \lbrace 0 \rbrace_{\zeta} \times \left(-1, 1 \right) $. Since $ \bar{g} = g \oplus dt^{2} $, the scalar curvature $ R_{\bar{g}} $ is constant with $ t $-variable; in addition, the vector fields $ V_{1}, V_{2} $ and $ \Delta_{\sigma^{*} \bar{g}} $ are constant with $ t $-variable. By Lemma \ref{Pre:lemma2}, $ F \equiv C + 1 $ on $ U_{2} $. Applying $ \frac{\partial^{2}}{\partial t^{2}} $ on both sides of (\ref{Pre:eqn5}) in $ U_{2} $, it follows that 
\begin{equation*}
  L_{\sigma^{*} \bar{g}} \left( \frac{\partial^{2}u}{\partial t^{2}} \right) =  \frac{4(n - 1)}{n - 2} \nabla_{V_{1}} \nabla_{V_{1}} \left( \frac{\partial^{2}u}{\partial t^{2}} \right) - 4\nabla_{V_{2}} \nabla_{V_{2}} \left( \frac{\partial^{2}u}{\partial t^{2}} \right)- 4\Delta_{\sigma^{*} \bar{g}}  \left( \frac{\partial^{2}u}{\partial t^{2}} \right) + R_{\bar{g}} |_{W}\left( \frac{\partial^{2}u}{\partial t^{2}} \right) = 0
\end{equation*}
in $ U_{2} $. Set $ t = \epsilon t' $, the new $ t' $-variable is associated with the metric $ (dt')^{2} = \epsilon^{-2} dt^{2} $ on $ \mathbb{S}^{1} $. We have
\begin{align*}
   & (\epsilon^{-1})^{2} \left( \frac{4(n - 1)}{n - 2} \nabla_{V_{1, \epsilon^{-2}\bar{g}}} \nabla_{V_{1, \epsilon^{-2}\bar{g}}} \left( \frac{\partial^{2}u}{\partial t^{2}} \right) + 4\nabla_{V_{2, \epsilon^{-2}\bar{g}}} \nabla_{V_{2, \epsilon^{-2}\bar{g}}} \left( \frac{\partial^{2}u}{\partial t^{2}} \right) \right) \\
   & \qquad + (\epsilon^{-1})^{2} \left( - 4\Delta_{\sigma^{*} \bar{g}}  \left( \frac{\partial^{2}u}{\partial (t')^{2}} \right) + R_{\bar{g}} |_{W}\left( \frac{\partial^{2}u}{\partial (t')^{2}} \right) \right) = 0
\end{align*}
in $ O_{2} $. Equivalently, the following PDE holds in $ O_{2} $ with the new metric $ \bar{g} \mapsto \epsilon^{-2} \bar{g} $,
\begin{equation}\label{Pre:eqn12}
\begin{split}
& L_{\sigma^{*}(\epsilon^{-2}\bar{g})}\left( \frac{\partial^{2}u}{\partial (t')^{2}} \right) : =  \\
& \qquad \frac{4(n - 1)}{n - 2} \nabla_{V_{1, \epsilon^{-2}\bar{g}}} \nabla_{V_{1, \epsilon^{-2}\bar{g}}} \left( \frac{\partial^{2}u}{\partial (t')^{2}} \right) + 4\nabla_{V_{2, \epsilon^{-2}\bar{g}}} \nabla_{V_{2, \epsilon^{-2}\bar{g}}} \left( \frac{\partial^{2}u}{\partial (t')^{2}} \right) \\
& \qquad \qquad - 4\Delta_{\sigma^{*} (\epsilon^{-2}\bar{g})}  \left( \frac{\partial^{2}u}{\partial (t')^{2}} \right) + R_{\epsilon^{-2}\bar{g}} |_{W}\left( \frac{\partial^{2}u}{\partial (t')^{2}} \right) = 0.
\end{split}
\end{equation}
Here $ V_{1, \epsilon^{-2}\bar{g}} $ and $  V_{2, \epsilon^{-2}\bar{g}} $ are the vector fields with respect to the new metric $ \epsilon^{-2} \bar{g} $ satisfying $ V_{\epsilon^{-2}\bar{g}} = \epsilon^{-1} V $. With the new metric $ \epsilon^{-2} \bar{g} $, we estimate $ \lVert \frac{\partial^{2}u}{\partial (t')^{2}} \rVert_{\calC^{0}(O_{1})} $ with local $ H^{s} $-type elliptic regularity. We set $ s > 0 $ such that $ s - \frac{n}{2} = 1 + \alpha \geqslant 2 - \frac{n}{p} $, where $ p $ and $ \alpha $ are given in Lemma \ref{Pre:lemma3}. With usual metric $ \sigma^{*} \bar{g} $, the standard $ H^{s} $-type local elliptic regularity estimates for second order elliptic operator $ L_{\sigma^{*}\bar{g}} $ gives
\begin{equation*}
\lVert v \rVert_{H^{s}(U, \sigma^{*} \bar{g})} \leqslant D_{s} \left( \lVert L_{\sigma^{*} \bar{g}} u \rVert_{H^{s - 2}(V, \sigma^{*} \bar{g})} + \lVert v \rVert_{\calL^{2}(V, \sigma^{*} \bar{g})} \right), \forall v \in \calC^{\infty}(V), U \subset V.
\end{equation*}
Here the constant $ D_{s} $ only depends on the operator $ L_{\sigma^{*}\bar{g}} $, the metric, the degree $ s $ and the domain $ U, V $, and is independent of $ \epsilon $. For the scaled metric $ \sigma^{*} (\epsilon^{-2} \bar{g}) $, the second order elliptic operator $ L_{\sigma^{*}(\epsilon^{-2}\bar{g})} $ satisfies $ L_{\sigma^{*}(\epsilon^{-2}\bar{g})} = \epsilon^{2} L_{\sigma^{*}\bar{g}}$. We apply this local elliptic estimate in $ O_{1} $ for the second order elliptic operator $ L_{\sigma^{*}(\epsilon^{-2}\bar{g})} $, 
\begin{equation}\label{Pre:eqn13}
\begin{split}
\left\lVert \frac{\partial^{2}u}{\partial (t')^{2}} \right\rVert_{H^{s}(O_{1}, \sigma^{*}(\epsilon^{-2} \bar{g}))} & \leqslant \epsilon^{-\frac{n}{2}} \left\lVert \frac{\partial^{2}u}{\partial (t')^{2}} \right\rVert_{H^{s}(O_{1}, \sigma^{*} \bar{g})} \\
& \leqslant \epsilon^{-\frac{n}{2}} \left( D_{s} \left\lVert L_{\sigma^{*}\bar{g}} \left( \frac{\partial^{2}u}{\partial (t')^{2}} \right) \right\rVert_{H^{s - 2}(O_{2}, \sigma^{*} \bar{g})} + \left\lVert \frac{\partial^{2}u}{\partial (t')^{2}} \right\rVert_{\calL^{2}(O_{2}, \sigma^{*} \bar{g})} \right) \\
& \leqslant D_{s, \epsilon}  \left\lVert L_{\sigma^{*}(\epsilon^{-2}\bar{g})} \left( \frac{\partial^{2}u}{\partial (t')^{2}} \right) \right\rVert_{H^{s - 2}(O_{2}, \sigma^{*} (\epsilon^{-2}\bar{g}))} + \epsilon^{-\frac{n}{2}} D_{s} \left\lVert \frac{\partial^{2}u}{\partial (t')^{2}} \right\rVert_{\calL^{2}(O_{2}, \sigma^{*} \bar{g})} \\
& =  D_{s} \left\lVert \frac{\partial^{2}u}{\partial (t')^{2}} \right\rVert_{\calL^{2}(O_{2}, \sigma^{*}(\epsilon^{-2} \bar{g}))}.
\end{split}
\end{equation}
Here the $ H^{s - 2} $-term vanishes due to the differential relation (\ref{Pre:eqn12}). With local $ H^{2} $-type local elliptic estimate for (\ref{Pre:eqn5}) and H\"older's inequality,
\begin{equation}\label{Pre:eqn14}
    \begin{split}
        \left\lVert \frac{\partial^{2}u}{\partial (t')^{2}} \right\rVert_{\calL^{2}(O_{2}, \sigma^{*} (\epsilon^{-2} \bar{g}))} & = \epsilon^{2 - \frac{n}{2}} \left\lVert \frac{\partial^{2}u}{\partial t^{2}} \right\rVert_{\calL^{2}(O_{2}, \sigma^{*} \bar{g})} \leqslant  \epsilon^{2 - \frac{n}{2}} \lVert u \rVert_{H^{2}(O_{2}, \sigma^{*}\bar{g})} \\
        & \leqslant \epsilon^{2 - \frac{n}{2}} D_{2} \left( \lVert F \rVert_{\calL^{2}(O_{3}, \sigma^{*}\bar{g})} + \lVert u \rVert_{\calL^{2}(O_{3}, \sigma^{*}\bar{g})} \right) \\
        & \leqslant \epsilon^{2 - \frac{n}{2}}D_{2}\lVert F \rVert_{\calL^{2}(O_{3}, \sigma^{*}\bar{g})} + \epsilon^{2 - \frac{n}{2}}D_{1} D_{2}\lVert u \rVert_{\calL^{\frac{2(n + 2)}{n}}(O_{3}, \sigma^{*}\bar{g})} \\
        & \leqslant \epsilon^{2} D_{2}  \lVert F \rVert_{\calL^{2}(O_{3}, \sigma^{*}(\epsilon^{-2}\bar{g}))} + \epsilon^{2 - \frac{n}{n + 2}} D_{1}D_{2} \lVert u \rVert_{\calL^{\frac{2(n + 2)}{n}}(O_{3}, \sigma^{*} (\epsilon^{-2} \bar{g}))} \\
        & \leqslant \epsilon^{2} D_{2}  \lVert F \rVert_{\calL^{2}(W, \sigma^{*}(\epsilon^{-2}\bar{g}))} + \epsilon^{2 - \frac{n}{n + 2}} D_{1}D_{2} \lVert u \rVert_{\calL^{\frac{2(n + 2)}{n}}(W, \sigma^{*} (\epsilon^{-2} \bar{g}))}.
    \end{split}
\end{equation}
By Sobolev embedding inequality with respect to $ \sigma^{*} (\epsilon^{-2} \bar{g}) $,
\begin{equation}\label{Pre:eqn15}
\left\lVert \frac{\partial^{2}u}{\partial (t')^{2}} \right\rVert_{\calC^{0}(O_{1})} \leqslant D_{0} \epsilon^{\frac{n}{2}} \left\lVert \frac{\partial^{2}u}{\partial (t')^{2}} \right\rVert_{H^{s}(O_{1}, \sigma^{*}(\epsilon^{-2} \bar{g}))}.
\end{equation}
Here $ D_{0}, D_{1}, D_{2} $ are independent of $ \epsilon $ and $ u $. It is well-known that the Yamabe constant (\ref{Pre:eqn10}) is invariant under conformal transformation, so is invariant under the scaling of the metrics. By (\ref{Pre:eqn10}) and Lemma \ref{Pre:lemma4}, $ Y(M \times \mathbb{S}_{t}^{1}, \bar{g}) $ is finite and bounded below by some positive constant since $ \bar{g} $ has uniformly positive scalar curvature. Denote by $ \lambda : = Y(M \times \mathbb{S}_{t}^{1}, \bar{g})$, and recall that $ \dim (M \times \mathbb{S}^{1}) = n + 2 $, we have
\begin{equation*} \lVert v \rVert_{\calL^{\frac{2(n +2)}{n}}(M \times \mathbb{S}^{1}, \epsilon^{-2}\bar{g})}^{2} \leqslant \lambda^{-1} \left( \frac{4(n + 1)}{n} \lVert \nabla_{\epsilon^{-2}\bar{g}} v \rVert_{\calL^{2}(M \times \mathbb{S}^{1}, \epsilon^{-2}\bar{g})}^{2} + \int_{M \times \mathbb{S}^{1}} R_{\epsilon^{-2}\bar{g}} v^{2} d\text{Vol}_{\epsilon^{-2}\bar{g}} \right). 
\end{equation*}
that holds for every $ v \in \calC_{c}^{\infty}(M \times \mathbb{S}^{1}) \backslash \lbrace 0 \rbrace $. Denote by $ X_{\xi, \zeta} : = X \times \lbrace \xi \rbrace \times \lbrace \zeta \rbrace \subset M $. For every $ \xi \in \R_{\xi}, \zeta \in \R_{\zeta} $, we denote the natural inclusion by $ \sigma_{\xi, \zeta} : X_{\xi, \zeta} \times \mathbb{S}^{1} \rightarrow M \times \mathbb{S}^{1} $, and the space $ X_{\xi, \zeta} \times \mathbb{S}^{1} $ by $ W_{\xi, \zeta} $. Since $ M $ is of bounded curvature and $ \bar{g} $ is a product metric, $ (M \times \mathbb{S}^{1}, \bar{g}) $ is of bounded curvature. By \cite[Lemma 2.25, Lemma 2.26]{Aubin} which are essentially due to Calabi, there exist two positive constants $ D_{3} $ and $ D_{4} $ such that two metrics $ \bar{g} $ and $ \sigma_{\xi, \zeta}^{*} \bar{g} \oplus d\xi^{2} \oplus d\zeta^{2} $ satisfy the pointwise relation
\begin{equation*}
    D_{4}^{-2} \sqrt{\det{\bar{g}}} \leqslant \sqrt{\det(\sigma_{\xi}^{*} \bar{g} \oplus d\xi^{2} \oplus d\zeta^{2})} \leqslant D_{3}^{\frac{2(n + 2)}{n}} \sqrt{\det{\bar{g}}}, \forall \xi, \zeta.
\end{equation*}
Set $ \xi' = \epsilon^{-1} \xi, \zeta' = \epsilon^{-1} \zeta $, we have
\begin{equation*}
   D_{4}^{-2} \sqrt{\det{(\epsilon^{-2}\bar{g})}} \leqslant \sqrt{\det(\sigma_{\xi}^{*}(\epsilon^{-2} \bar{g}) \oplus d(\xi')^{2} \oplus d(\zeta')^{2})} \leqslant D_{3}^{\frac{2(n + 2)}{n}} \sqrt{\det{(\epsilon^{-2}\bar{g})}}, \forall \xi', \zeta'.
\end{equation*}
Applying the the positive lower bound of the Yamabe quotient, the volume form comparison, and note the construction of $ \Phi $ and $ \Phi' $ in (\ref{Pre:eqn9}), we have
\begin{equation}\label{Pre:eqn16}
\begin{split}
& \lVert u \rVert_{\calL^{\frac{2(n + 2)}{n}}(W, \sigma^{*} (\epsilon^{-2} \bar{g}))} \\
& \qquad = (2\epsilon)^{\frac{n}{n + 2}} \left( \int_{-\epsilon^{-1}}^{\epsilon^{-1}} \int_{-\epsilon^{-1}}^{\epsilon^{-1}} \int_{W} \lvert u_{1} \rvert^{\frac{2(n + 1)}{n - 1}} d\text{Vol}_{\sigma^{*} (\epsilon^{-2} \bar{g})} d\xi'd\zeta'  \right)^{\frac{n}{2(n + 2)}} \\
& \qquad = (2\epsilon)^{\frac{n}{n + 2}} \left( \int_{W \times [-\epsilon^{-1}, \epsilon^{-1}]^{2}} \lvert u_{1} \rvert^{\frac{2(n + 2)}{n}} d\text{Vol}_{\sigma^{*}(\epsilon^{-2} \bar{g}) \oplus d(\xi')^{2}}  \right)^{\frac{n}{2(n + 2)}} \\
& \qquad = (2\epsilon)^{\frac{n}{n + 2}} \left( \int_{W \times [-\epsilon^{-1}, \epsilon^{-1}]^{2}} \lvert \Phi(\cdot, \xi', \zeta')\Phi'(\cdot, \xi', \zeta') \cdot u_{1} \rvert^{\frac{2(n + 2)}{n}} d\text{Vol}_{\sigma^{*} (\epsilon^{-2} \bar{g}) \oplus d(\xi')^{2}}  \right)^{\frac{n }{2(n + 2)}}  \\
& \qquad \leqslant (2\epsilon)^{\frac{n}{n + 2}} D_{3}  \left( \int_{M \times \mathbb{S}^{1}_{t'}}  \lvert \bar{u} \rvert^{\frac{2(n + 2)}{n}} d\text{Vol}_{\epsilon^{-2}\bar{g}} \right)^{\frac{n}{2(n + 2)}} \\
& \qquad \leqslant (2\epsilon)^{\frac{n}{n + 2} } D_{3} \lambda^{-\frac{1}{2}} \left( \frac{4(n + 1)}{n} \lVert \nabla_{\epsilon^{-2}\bar{g}} \bar{u} \rVert_{\calL^{2}(M \times \mathbb{S}^{1}_{t'}, \epsilon^{-2}\bar{g})} + \int_{M \times \mathbb{S}^{1}_{t'}} R_{\epsilon^{-2}\bar{g}} \bar{u}^{2} d\text{Vol}_{\epsilon^{-2}\bar{g}} \right)^{\frac{1}{2}} \\
& \qquad \leqslant (2\epsilon)^{\frac{n}{n + 2}} D_{3} D_{4} \lambda^{-\frac{1}{2}} \left( \frac{4(n + 1)}{n}\int_{\R} \int_{\R} \int_{W_{\xi, \zeta}} \lvert \nabla_{\sigma_{\xi}^{*}(\epsilon^{-2}\bar{g})} \bar{u} \rvert^{2} d\text{Vol}_{\sigma_{\xi}^{*}(\epsilon^{-2}\bar{g})} d\xi' d\zeta' \right)^{\frac{1}{2}} \\
& \qquad \qquad + (2\epsilon)^{\frac{n}{n + 2}} D_{3} D_{4} \lambda^{-\frac{1}{2}} \left( \int_{\R} \int_{\R} \int_{W_{\xi, \zeta}} R_{\epsilon^{-2}\bar{g}} \bar{u}^{2} d\text{Vol}_{\sigma^{*} (\epsilon^{-2}\bar{g})} d\xi'd\zeta' \right)^{\frac{1}{2}} \\
& = 2^{\frac{n}{n + 2}} \epsilon^{-1} D_{3} D_{4} \lambda^{-\frac{1}{2}} \left( \frac{4(n + 1)}{n}\int_{\R} \int_{\R} \int_{W_{\xi, \zeta}} \lvert \nabla_{\sigma_{\xi, \zeta}^{*}(\epsilon^{-2}\bar{g})} (\Phi(\cdot, \xi, \zeta) \Phi'(\cdot, \xi, \zeta) u_{1}) \rvert^{2} d\text{Vol}_{\sigma_{\xi, \zeta}^{*}(\epsilon^{-2}\bar{g})} d\xi \right)^{\frac{1}{2}} \\
& \qquad \qquad + 2^{\frac{n}{n + 2}} \epsilon^{-1} D_{3} D_{4} \lambda^{-\frac{1}{2}} \left( \int_{\R} \int_{\R} \int_{W_{\xi, \zeta}} R_{\epsilon^{-2}\bar{g}} (\Phi(\cdot, \xi, \zeta)\Phi'(\cdot, \xi, \zeta) u_{1})^{2} d\text{Vol}_{\sigma^{*} (\epsilon^{-2}\bar{g})} d\xi \right)^{\frac{1}{2}} \\
& \qquad \leqslant 2D_{3} D_{4}D_{\Phi, \Phi'} \lambda^{-\frac{1}{2}} \epsilon^{-\frac{2}{n + 2}} \times \\
& \qquad \qquad \times \left( \frac{4(n + 1)}{n} \max_{\xi \in I_{\xi}, \zeta \in I_{\zeta}} \left( \lVert \nabla_{\sigma_{\xi, \zeta}^{*} (\epsilon^{-2} \bar{g})}u_{1} \rVert_{\calL^{2}(W_{\xi}, \sigma_{\xi, \zeta}^{*}(\epsilon^{-2}\bar{g}))}^{2} \right) + \epsilon^{2} \max_{W} \lvert R_{\bar{g}} \rvert \lVert u \rVert_{\calL^{2}(W, \sigma^{*}(\epsilon^{-2}\bar{g}))}^{2} \right)^{\frac{1}{2}}
\end{split}
\end{equation}
Here $ D_{\Phi, \Phi'} $ depends on $ \Phi, \Phi' $ and the supports $ I_{\xi}, I_{\zeta} $, and is independent of $ \epsilon $ and $ u $. We absorb the constant $ 2^{\frac{n}{n + 2}} $ into $ D_{\Phi, \Phi'} $ for simplification. In this long derivation, we must use the noncompact version of the Yamabe constant due to the support of $ \Phi \cdot \Phi' $ with respect to $ \xi', \zeta' $ variables.

Note that there are trivial diffeomorphisms between $ W $ and $ W_{\xi, \zeta} $ for every $ \xi \in I_{\xi}, \zeta \in I_{\zeta} $. Note that $ \bar{u} $ is constant for both $ \xi $-variable and $ \zeta $-variable. By compactness of $ W \times I_{\xi} \times I_{\zeta} $ and smallness of $ \calC^{1, \alpha} $-norm of $ u $, we have $ \lVert \nabla_{\sigma_{\xi, \zeta}^{*}\bar{g}} \bar{u} \rVert_{\calL^{2}(W_{\xi, \zeta}, \sigma_{\xi, \zeta}^{*} \bar{g})} \leqslant D_{5} \eta $ for some $ D_{5} $ independent of $ \epsilon $. Without loss of generality, we may assume $ \max_{W} \lvert R_{\bar{g}} \rvert \leqslant D_{5} $ by increasing $ D_{5} $ if necessary. Note also that by Lemma \ref{Pre:lemma2}, $ \lVert F \rVert_{\calL^{p}(W, \sigma^{*}\bar{g})} $ is invariant under the scaling of the size in $ t $-direction since in a tubular neighborhood of $ X \times \lbrace 0 \rbrace $, we write $ F = (C + 1) \cdot (\Pi_{2})^{*} \phi(t) $ by \cite[Lemma 2.2]{XU11}, hence is small in the sense that $ (C + 1)^{p} \epsilon \ll 1 $, hence $ (C + 1)^{2} \epsilon \ll 1 $ since $ p > n \geqslant 3 $. Combining (\ref{Pre:eqn6}), (\ref{Pre:eqn13}), (\ref{Pre:eqn14}), (\ref{Pre:eqn15}), (\ref{Pre:eqn16}), we have
\begin{align*}
& \left\lVert \frac{\partial^{2}u}{\partial (t')^{2}} \right\rVert_{\calC^{0}(O_{1})} \\ 
& \qquad \leqslant D_{0}\epsilon^{\frac{n}{2}} \lVert \frac{\partial^{2}u}{\partial (t')^{2}} \rVert_{H^{s}(O_{1}, \sigma^{*}(\epsilon^{-2} \bar{g}))} \leqslant D_{0} D_{s} \epsilon^{\frac{n}{2}} \left\lVert \frac{\partial^{2}u}{\partial (t')^{2}} \right\rVert_{\calL^{2}(O_{2}, \sigma^{*}(\epsilon^{-2} \bar{g}))} \\
& \qquad \leqslant \epsilon^{2 + \frac{n}{2}} D_{0}D_{2}D_{s}  \lVert F \rVert_{\calL^{2}(W, \sigma^{*}(\epsilon^{-2}g))} + \epsilon^{2 - \frac{n}{n + 2} + \frac{n}{2}} D_{0}D_{1}D_{2}D_{s} \lVert u \rVert_{\calL^{\frac{2(n + 1)}{n - 1}}(W, \sigma^{*} (\epsilon^{-2} \bar{g}))} \\
& \qquad \leqslant 2D_{0} D_{1} D_{2} D_{3}D_{4} D_{s} D_{\Phi} \lambda^{-\frac{1}{2}}  \epsilon^{1 + \frac{n}{2}} \times \\
& \qquad \qquad \times \left( \frac{4(n + 1)}{n} \max_{\xi \in I_{\xi}, \zeta \in I_{\zeta}} \left( \lVert \nabla_{\sigma_{\xi, \zeta}^{*} (\epsilon^{-2} \bar{g})} u_{1} \rVert_{\calL^{2}(W_{\xi, \zeta}, \sigma_{\xi, \zeta}^{*}(\epsilon^{-2}\bar{g}))}^{2} \right) + \epsilon^{2} \max_{W} \lvert R_{\bar{g}} \rvert \lVert u \rVert_{\calL^{2}(W, \sigma^{*}(\epsilon^{-2}\bar{g}))}^{2} \right)^{\frac{1}{2}} \\
& \qquad \qquad \qquad + \epsilon^{2 + \frac{n}{2}} D_{0}D_{2}D_{s}  \lVert F \rVert_{\calL^{2}(W, \sigma^{*}(\epsilon^{-2}\bar{g}))} \\
& \qquad : = \bar{D}_{0}  \epsilon^{2} \left( \frac{4(n + 1)}{n} \max_{\xi \in I_{\xi}, \zeta \in I_{\zeta}} \left( \lVert \nabla_{\sigma_{\xi, \zeta}^{*}  \bar{g}} u_{1} \rVert_{\calL^{2}(W_{\xi, \zeta}, \sigma_{\xi, \zeta}^{*}\bar{g})}^{2} \right) + \max_{W} \lvert R_{\bar{g}} \rvert \lVert u \rVert_{\calL^{2}(W, \sigma^{*}\bar{g})}^{2} \right)^{\frac{1}{2}} \\
& \qquad + \epsilon^{2 + \frac{n}{2}} D_{0}D_{2}D_{s}  \lVert F \rVert_{\calL^{2}(W, \sigma^{*}(\epsilon^{-2}\bar{g}))} \\
& \qquad < \frac{8(n + 1)}{n} \bar{D}_{0} D_{5} \epsilon^{2} \eta + D_{0}D_{2}D_{s} \epsilon^{2} \text{Vol}_{g}(X) (C + 1) \epsilon^{\frac{1}{2}} \\
& \qquad : = \bar{D} \epsilon^{2} \eta + \bar{D}' \epsilon^{2 + \frac{1}{2}} (C + 1).
\end{align*}
By taking smaller $ \epsilon $ that resulting in smaller $ \eta $ in Lemma \ref{Pre:lemma3} if necessary, it follows that
\begin{equation*}
\left\lVert \frac{\partial^{2}u}{\partial t^{2}} \right\rVert_{\calC^{0}(U_{1})} = \epsilon^{-2} \left\lVert \frac{\partial^{2}u}{\partial (t')^{2}} \right\rVert_{\calC^{0}(O_{1})} < \bar{D} \eta + \bar{D}'(C + 1) \epsilon^{\frac{1}{2}} : = \eta' \ll 1.
\end{equation*}
As desired.
\end{proof}
\begin{remark}\label{Pre:re3}
Since $ u_{W} = u + 1 $ and $ \Phi \cdot \Phi' \equiv 1 $ in $ X \times [-1, 1]_{\xi} \times [-1, 1]_{\zeta} \times \times \mathbb{S}_{t}^{1} $, Lemma \ref{Pre:lemma5} and (\ref{Pre:eqn11}) implies that
\begin{equation}\label{Pre:eqn17}
    \left\lVert \frac{\partial^{2} u_{W}}{\partial t^{2}} \right\rVert_{\calC^{0}(X \times \lbrace 0 \rbrace_{\xi} \times \lbrace 0 \rbrace_{\zeta} \times \lbrace P \rbrace_{t})} < \eta' \Rightarrow \left\lvert \frac{\partial^{2} \hat{u}}{\partial t^{2}} \bigg|_{X \times \lbrace 0 \rbrace_{\xi} \times \lbrace 0 \rbrace_{\zeta} \times \left(- \frac{\epsilon}{4}, \frac{\epsilon}{4} \right)_{t}} \right\rvert < \eta'.
    \end{equation}
\end{remark}
By (\ref{intro:eqn3}), we need to apply $ \Delta_{g} u_{M} $ and $ \Delta_{\imath_{\xi}^{*}g}u_{Y} $, but we only have a differential relation in terms of $ \Delta_{\sigma^{*}g} u_{W} $. We need to compute the relations among them via the Laplacian $ \Delta_{\bar{g}} \hat{u} $ in the ambient space $ M \times \mathbb{S}^{1} $.
\begin{lemma}\label{Pre:lemma6}
Under the hypotheses of Lemma \ref{Pre:lemma5},
\begin{equation}\label{Pre:eqn18}
\begin{split}
\Delta_{\bar{g}} \hat{u} & = \Delta_{g} u_{M} + \frac{\partial^{2} \hat{u}}{\partial t^{2}} \; {\rm on} \; X \times \R_{\xi} \times \R_{\zeta} \times \lbrace P \rbrace_{t}, \\
\Delta_{\bar{g}} \hat{u} & = \Delta_{\imath_{\xi}^{*}g} u_{Y} + \frac{\partial^{2} \hat{u}}{\partial t^{2}} \; {\rm on} \; X \times \R_{\zeta} \times \lbrace 0 \rbrace_{\xi} \times \lbrace P \rbrace_{t}, \\
\Delta_{\bar{g}}\hat{u} & = \Delta_{\sigma^{*}\bar{g}} u_{W} + A_{1}(\bar{g}, M, u) \; {\rm on} \; X \times \lbrace 0 \rbrace_{\xi} \times \lbrace 0 \rbrace_{\zeta} \times \mathbb{S}_{t}^{1},
\end{split}
\end{equation}
where $ A_{1}(g, M, u) $ involves zeroth and first order derivatives of $ u $.
\end{lemma}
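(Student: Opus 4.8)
The plan is to establish the three identities by direct computation, exploiting two structural features of the set-up. First, the product splitting $\bar g = g \oplus dt^2$ forces $\Delta_{\bar g} = \Delta_g + \partial_t^2$ on $\calC^\infty(M \times \mathbb{S}^1)$. Second — as recorded in Remark \ref{Pre:re2} via (\ref{Pre:eqn7})--(\ref{Pre:eqn8}) — $\hat u$ is pulled back from $W$ along the projection $\pi$ that kills the $\xi$- and $\zeta$-variables, so $\partial_\xi \hat u = \partial_\zeta \hat u = 0$ and every higher $\xi$- or $\zeta$-derivative of $\hat u$ vanishes, and likewise $\partial_\xi u_M = \partial_\zeta u_M = 0$ and $\partial_\zeta u_Y = 0$. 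I would treat the three identities in order of increasing difficulty.

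The first identity is immediate: applying $\Delta_{\bar g} = \Delta_g + \partial_t^2$ to $\hat u$ and restricting to $\{t = P\}$, where $\hat u$ restricts to $u_M$, gives $\Delta_{\bar g}\hat u = \Delta_g u_M + \partial_t^2\hat u$ on $M \times \{P\}$. For the second identity I would use the same device, now restricting to the hypersurface $X \times \R_\zeta \times \{0\}_\xi \times \mathbb{S}^1$, which carries the product metric $\imath_\xi^* g \oplus dt^2$ and on which $\hat u$ restricts to $\hat u'$ (and $\hat u'$ to $u_Y$ at $t = P$); the $\xi$-independence of $\hat u$ is what identifies the restriction of the ambient Laplacian there as $\Delta_{\imath_\xi^* g}u_Y + \partial_t^2\hat u$.

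The third identity is where the real work lies, since $W = X \times \{0\}_\xi \times \{0\}_\zeta \times \mathbb{S}^1_t$ sits with codimension two in $M \times \mathbb{S}^1$ and $\sigma^*\bar g$ is its genuine induced metric. In local coordinates $\{x^1, \dots, x^{n-1}, \zeta, \xi, t\}$ around a point of $W$ I would write $\Delta_{\bar g}\hat u$ in divergence form, $\frac{1}{\sqrt{\det\bar g}}\partial_a(\sqrt{\det\bar g}\,\bar g^{ab}\partial_b\hat u)$, and compare it term by term with $\Delta_{\sigma^*\bar g}u_W$ written the same way on $W$. Choosing the $X$-coordinates so that $\partial_{x^1}, \dots, \partial_{x^{n-1}}$ are $g$-orthogonal to $\partial_\xi$ and $\partial_\zeta$ at the chosen point makes $\bar g$ block-diagonal there relative to $TW$ and its coordinate complement $\mathrm{span}\{\partial_\xi,\partial_\zeta\}$, so that the $TW$-block of $\bar g^{-1}$ coincides with the inverse of $\sigma^*\bar g$ at that point; combined with $\partial_\xi\hat u = \partial_\zeta\hat u = 0$ and the vanishing of higher $\xi,\zeta$-derivatives, this makes every genuinely second-order term of $\Delta_{\bar g}\hat u$ cancel against the second-order part of $\Delta_{\sigma^*\bar g}u_W$. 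What survives is a first-order differential expression in $\hat u$ whose coefficients are built from the one-jet of $\bar g$ along $W$ (the Christoffel symbols of $\bar g$, equivalently the second fundamental form and mean curvature of $W$ together with metric derivatives); since $u_W = u + 1$, the derivatives of $\hat u = \pi^* u_W$ agree with those of $\pi^* u$, so this expression involves only zeroth- and first-order derivatives of $u$, and we record it as $A_1(\bar g, M, u)$.

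I expect the main obstacle to be the bookkeeping of the $X$--$\R^2$ off-diagonal components of $\bar g$ in this last computation: one must verify that the transverse contributions $\bar g^{ab}\partial_a\partial_b\hat u$ with an index in the $\zeta,\xi$ block, and the mismatch between the $TW$-block of $\bar g^{-1}$ and the inverse of $\sigma^*\bar g$ away from the distinguished point, leave no surviving second derivatives of $\hat u$. This is precisely where the $\xi,\zeta$-independence of $\hat u$ together with the adapted coordinates are used decisively; once this is settled, the three identities follow by matching coefficients, in the manner of the analogous Laplacian-comparison lemma of \cite{XU11}.
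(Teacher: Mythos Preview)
Your handling of the first two identities matches the paper's: both use $\bar g = g \oplus dt^2$ to split off $\partial_t^2$, together with the $\xi,\zeta$-constancy of $\hat u$ recorded in Remark~\ref{Pre:re2}.

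For the third identity the paper takes a different route. Rather than a direct codimension-two comparison, it factors through the intermediate space $Y\times\mathbb{S}^1=X\times\R_\zeta\times\mathbb{S}^1$ carrying the product metric $\imath_\xi^*g\oplus dt^2$, writing
\[
\Delta_{\bar g}\hat u-\Delta_{\sigma^*\bar g}u_W
=\bigl(\Delta_{\bar g}\hat u-\Delta_{\imath_\xi^*g\oplus dt^2}\hat u'\bigr)
+\bigl(\Delta_{\imath_\xi^*g\oplus dt^2}\hat u'-\Delta_{\sigma^*\bar g}u_W\bigr),
\]
and then invokes \cite[Lemma~2.5]{XU11} once for each codimension-one step (using $\xi$-constancy of $\hat u$ for the first bracket and $\zeta$-constancy of $\hat u'$ for the second) to conclude each bracket is first-order in $u$; the sum is then $A_1$.

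Your direct approach has a gap at the coordinate step. You propose to choose $X$-coordinates so that $\partial_{x^1},\dots,\partial_{x^{n-1}}$ are $g$-orthogonal to $\partial_\xi,\partial_\zeta$ at the chosen point, yielding a block-diagonal $\bar g$ relative to $TW$ and $\mathrm{span}\{\partial_\xi,\partial_\zeta\}$. But any reparametrisation of $X$ alone leaves $\partial_{x^i}\in T_pX$; demanding $\partial_{x^i}\perp_g\partial_\xi,\partial_\zeta$ forces $T_pX\perp_g\mathrm{span}\{\partial_\xi,\partial_\zeta\}$, which is a condition on $g$, not something a chart on $X$ can arrange. Without the block structure, the $TW$-block of $\bar g^{-1}$ need not agree with $(\sigma^*\bar g)^{-1}$, so the second-order pieces $\sum_{i,j}\bar g^{ij}\partial_i\partial_j\hat u$ and $\sum_{i,j}(\sigma^*\bar g)^{ij}\partial_i\partial_j u_W$ do not cancel and your $A_1$ would pick up second derivatives of $u$. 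If instead you pass to Fermi coordinates off $W$ to force block-diagonality, the new transverse directions are no longer $\partial_\xi,\partial_\zeta$, and the $\xi,\zeta$-independence of $\hat u$ is lost. The paper's two-step reduction to \cite[Lemma~2.5]{XU11} is exactly what circumvents this, trading one codimension-two comparison for two codimension-one comparisons already on record.
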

\begin{proof}
Let's verify the last equality of (\ref{Pre:eqn18}) first. By the diagrams (\ref{Pre:eqn6}), it is equivalent to compute
\begin{equation*}
    \Delta_{\bar{g}}\hat{u} - \Delta_{\sigma^{*}\bar{g}} u_{W} = \Delta_{\bar{g}}\hat{u} - \Delta_{\imath_{\xi}^{*}g \oplus dt^{2}} (\pi')^{*}u_{W} + \Delta_{\imath_{\xi}^{*}g \oplus dt^{2}} (\pi')^{*}u_{W} - \Delta_{\sigma^{*}\bar{g}} u_{W}
\end{equation*}
By (\ref{Pre:eqn7}) and (\ref{Pre:eqn8}), it follows that $ (\pi')^{*}u_{W} $ is constant with $ \zeta $-variable, and $ \hat{u} $ is constant with both $ \xi $-variable and $ \zeta $-variable. Therefore by \cite[Lemma 2.5]{XU11}, there are $ B_{1}(\bar{g}, M, u) $ and $ B_{2}(\bar{g}, M, u) $, which consists of zeroth and first orders of derivatives of $ u $ only, such that
\begin{align*}
    \Delta_{\bar{g}}\hat{u} - \Delta_{\imath_{\xi}^{*}g \oplus dt^{2}} (\pi')^{*}u_{W}  & = B_{1}(g, M, u), \\
    \Delta_{\imath_{\xi}^{*}g \oplus dt^{2}} (\pi')^{*}u_{W} - \Delta_{\sigma^{*}\bar{g}} u_{W} & = B_{2}(g, M, u).
\end{align*}
Denote by $ A_{1}(g, M, u) : = B_{1}(g, M, u) + B_{2}(g, M, u) $, the last equality of (\ref{Pre:eqn18}) follows.

Choose any local chart of $ M $, that is of the form $ (x^{1}, \dotso, x^{n - 1}, \zeta, \xi, t) $. Since $ \bar{g} = g \oplus dt^{2} $, $ \hat{u} $ is constant along each $ \xi $-fiber and $ \zeta $-fiber, $ u_{Y} $ is cosntant along each $ \zeta $-fiber, the first two equalities of (\ref{Pre:eqn18}) follows.
\end{proof}
\medskip

\section{Riemannian Metric with Positive Scalar Curvature on $ X $}
In this section, we show that for any complete metric $ g $ on the noncompact space $ M \times \R^{2} $ such that (i) $ g $ is of bounded curvature, (ii) $ R_{g} \geqslant \kappa_{0} > 0 $  for some $ \kappa_{0} \in R_{+} $ and (iii) satisfying the $ g $-angle condition (\ref{intro:eqn1a}) or (\ref{intro:eqn1b}) on $ X_{\zeta, 0} $, there exists a conformal metric $ \tilde{g} \in [g] $ such that $ R_{\imath_{\zeta}^{*}\imath_{\xi}^{*} \tilde{g}} > 0 $ on $ X $. This completes the Step I of our Riemannian path, and partially answers the 1994 Rosenberg-Stolz conjecture \cite[Conjecture 7.1(2)]{RosSto} in all dimensions.

As a final preparation, we denote by 
\begin{align*}
    & e^{2\hat{\phi}} : = (\hat{u})^{\frac{4}{n - 2}} \Rightarrow \phi_{W} : = \sigma^{*} \hat{\phi},  \phi_{M} : = \tau_{1}^{*} \hat{\phi}, \phi_{Y} : = \imath_{\xi}^{*} \tau_{1}^{*} \hat{\phi} \\
    \Rightarrow & e^{2\phi_{W}} = (u_{W})^{\frac{4}{n - 2}}, e^{2\phi_{M}} = (u_{M})^{\frac{4}{n - 2}}, e^{2\phi_{Y}} = (u_{Y})^{\frac{4}{n-2}}.
\end{align*}
For any two functions $ \varphi, v $ with the relation $ e^{2\varphi} = v^{\frac{4}{n - 2}}, n \geqslant 3 $, any Riemannian metric $ g_{0} $ and any global vector $ V $ in appropriate spaces, we have
\begin{equation}\label{Riem:eqn0}
\begin{split}
    & v^{-\frac{n+2}{n- 2}}\left(-\frac{4}{n - 2} \Delta_{g_{0}} v 
    \right) = e^{-2\varphi} \left(  
    - 2 \Delta_{g_{0}} \varphi - (n - 2) \lvert \nabla_{g_{0}} \varphi \rvert^{2} \right), \\
   &  e^{-2\varphi} \nabla_{g_{0}} \varphi = \frac{2}{n - 2} v^{-\frac{n+2}{n-2}} \nabla_{g_{0}} v,\  e^{-2\varphi} \lvert \nabla_{g_{0}} \varphi \rvert^{2} = \left( \frac{2}{n - 2} \right)^{2}v^{-\frac{2n}{n - 2}} \lvert \nabla_{g_{0}} v \rvert^{2}, \\
   & e^{-2\varphi} \left( 2(n - 2) \nabla_{V} \nabla_{V} \varphi + (n - 2)^{2} \nabla_{V} \varphi \nabla_{V} \varphi \right)   
= 4v^{-\frac{n + 2}{n - 2}}  \nabla_{V} \nabla_{V} v.
\end{split}
\end{equation}  
We are now ready to prove our main theorem in Riemannian geometry. Note that we only deal with the hard case where $ (d\pi_{\xi})_{*}\nu_{g} $ is nowhere vanishing on $ X_{\xi, 0} $. When $ (d\pi_{\xi})_{*}\nu_{g} \equiv 0 $, the following theorem holds due to \cite[Theorem 3.1]{XU11} with the degenerated angle condition given in Remark \ref{Pre:re1}.
\begin{theorem}\label{Riem:thm1}
Let $ X $ be an oriented, closed manifold with $ n - 1 = \dim_{\R}X \geqslant 2 $. Assume that $ X \times \R^{2} $ admits a Riemannian metric $ g $ that is of bounded curvature, and such that $ R_{g} \geqslant \kappa_{0} > 0 $ for some $ \kappa_{0} > 0 $. Assume
\begin{equation}\label{Riem:eqn1a}
\begin{split}
 & \frac{n - 1}{n - 2} \sec^{2} (\angle_{\imath_{\xi}^{*}g}(d\pi_{\xi})_{*} \nu_{g}, \partial_{\zeta})) + \sec^{2} (\angle_{\imath_{\xi}^{*}g}(\nu_{\imath_{\xi}^{*}g}, \partial_{\zeta}))  \\
 & \qquad = \frac{n - 1}{n - 2} \frac{\imath_{\xi}^{*}g(\partial_{\zeta},\partial_{\zeta})}{\imath_{\xi}^{*}g((d\pi_{\xi})_{*}\nu_{g},\partial_{\zeta})^{2}} + \frac{\imath_{\xi}^{*}g(\partial_{\zeta},\partial_{\zeta})}{\imath_{\xi}^{*}g(\nu_{\imath_{\xi}^{*}g},\partial_{\zeta})^{2}}<
 2 + \frac{n - 1}{n - 2}
\end{split}
\end{equation}
on $ X \times \lbrace 0 \rbrace_{\xi} \times \lbrace 0 \rbrace_{\zeta} $ if $ (d\pi_{\xi})_{*}\nu_{g} $ is nowhere vanishing along $ X \times \lbrace 0 \rbrace_{\xi} \times \lbrace 0 \rbrace_{\zeta} $; otherwise assume
\begin{equation}\label{Riem:eqn1b}
\sec^{2} (\angle_{\imath_{\xi}^{*}g}(\nu_{\imath_{\xi}^{*}g}, \partial_{\zeta}))   = \frac{\imath_{\xi}^{*}g(\partial_{\zeta},\partial_{\zeta})}{\imath_{\xi}^{*}g(\nu_{\imath_{\xi}^{*}g},\partial_{\zeta})^{2}} < 2 
\end{equation}
when $ (d\pi_{\xi})_{*}\nu_{g} \equiv 0 $ on $ X \times \lbrace 0 \rbrace_{\xi} \times \lbrace 0 \rbrace_{\zeta} $, Then there exists a complete Riemannian metric $ \tilde{g} $ in the conformal class of $ g $ such that $ R_{\imath_{\zeta}^{*} \imath_{\xi}^{*} \tilde{g}} > 0 $ on $ X $.
\end{theorem}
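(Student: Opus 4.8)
The plan is to build the conformal factor from the small solution $u$ of the elliptic equation (\ref{Pre:eqn5}) furnished by Lemma \ref{Pre:lemma3}. With $u_{W}=u+1$ and its pullbacks $u_{M},u_{Y},u_{X}$ as in (\ref{Pre:eqn7})--(\ref{Pre:eqn8}), I set $\tilde{g}:=u_{M}^{4/(n-2)}g=e^{2\phi_{M}}g$. After shrinking $\eta$ so that $u_{W}\in[\tfrac12,\tfrac32]$, the factor $u_{M}$ is smooth, positive and bounded away from $0$ and $\infty$, so $\tilde{g}$ is smooth, lies in $[g]$, and is uniformly equivalent to $g$, hence complete; thus it suffices to prove $R_{\imath_{\zeta}^{*}\imath_{\xi}^{*}\tilde{g}}>0$ along $X=X_{\zeta,0}$. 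For this I substitute $\phi=\phi_{M}$ into the twice-iterated Gauss--Codazzi/conformal identity (\ref{intro:eqn3}), restricted to $X_{\zeta,0}$. Its first bracket is precisely $R_{\imath_{\zeta}^{*}\imath_{\xi}^{*}g}$ by (\ref{intro:eqn2}), a continuous function on the closed manifold $X$, hence $\ge-\Lambda_{0}$ for some $\Lambda_{0}=\Lambda_{0}(g,X)\ge0$; so the a priori indefinite Ricci, second-fundamental-form and mean-curvature terms never have to be controlled individually, only their fixed combination.

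The heart of the argument is the genuinely second-order bracket of (\ref{intro:eqn3}). Using the substitution rules (\ref{Riem:eqn0}) I trade $\phi_{M}$ for $u_{M}$ and $\imath_{\xi}^{*}\phi_{M}$ for $u_{Y}$; on $X_{\zeta,0}$ this turns that bracket into $u_{X}^{-1}\bigl(\tfrac{4(n-1)}{n-2}\nabla_{\nu_{g}}\nabla_{\nu_{g}}u_{M}+4\nabla_{\nu_{\imath_{\xi}^{*}g}}\nabla_{\nu_{\imath_{\xi}^{*}g}}u_{Y}-\tfrac{4(n-1)}{n-2}\Delta_{g}u_{M}+\tfrac{4}{n-2}\Delta_{\imath_{\xi}^{*}g}u_{Y}\bigr)$ plus gradient-squared remainders of order $\|u\|_{\calC^{1}}^{2}$. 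Now I invoke Remark \ref{Pre:re2}: $\hat{u}$ (hence $u_{M}$) is constant in the $\xi$- and $\zeta$-directions and $\hat{u}'$ (hence $u_{Y}$) is constant in $\zeta$. Since $(d\pi_{\xi})_{*}\nu_{g}=a\partial_{\zeta}+V_{1}$ and $\nu_{\imath_{\xi}^{*}g}=b\partial_{\zeta}+V_{2}$ with $V_{1},V_{2}$ tangent to $X$, the Hessians $\nabla_{\nu_{g}}\nabla_{\nu_{g}}u_{M}$ and $\nabla_{\nu_{\imath_{\xi}^{*}g}}\nabla_{\nu_{\imath_{\xi}^{*}g}}u_{Y}$ become $\nabla_{V_{1}}\nabla_{V_{1}}u_{W}$ and $\nabla_{V_{2}}\nabla_{V_{2}}u_{W}$ up to first-order-in-$u$ Christoffel terms whose coefficients are bounded by compactness of $X$; and Lemma \ref{Pre:lemma6} converts $\Delta_{g}u_{M}$ and $\Delta_{\imath_{\xi}^{*}g}u_{Y}$ (which coincide on $X_{\zeta,0}$) into $\Delta_{\sigma^{*}\bar{g}}u_{W}-\tfrac{\partial^{2}\hat{u}}{\partial t^{2}}$ up to the same type of lower-order terms, so that $-\tfrac{4(n-1)}{n-2}\Delta_{g}u_{M}+\tfrac{4}{n-2}\Delta_{\imath_{\xi}^{*}g}u_{Y}$ collapses to $-4\Delta_{\sigma^{*}\bar{g}}u_{W}+4\tfrac{\partial^{2}\hat{u}}{\partial t^{2}}$ modulo lower order. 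Applying equation (\ref{Pre:eqn5}) to $u_{W}=u+1$ then reduces the leading combination $\tfrac{4(n-1)}{n-2}\nabla_{V_{1}}\nabla_{V_{1}}u_{W}+4\nabla_{V_{2}}\nabla_{V_{2}}u_{W}-4\Delta_{\sigma^{*}\bar{g}}u_{W}$ to $F-R_{\bar{g}}|_{W}u$, and $F=C+1$ at the point $\xi=\zeta=0$, $t=P$ by Lemma \ref{Pre:lemma2}(ii).

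Putting this together, on $X$ the conformal identity becomes
\begin{equation*}
R_{\imath_{\zeta}^{*}\imath_{\xi}^{*}\tilde{g}}=e^{-2\phi_{X}}\Bigl[\,u_{X}^{-1}(C+1)+R_{\imath_{\zeta}^{*}\imath_{\xi}^{*}g}-u_{X}^{-1}R_{\bar{g}}|_{W}u+4u_{X}^{-1}\tfrac{\partial^{2}\hat{u}}{\partial t^{2}}+E\,\Bigr],
\end{equation*}
where $E$ gathers all gradient-squared terms (of order $\eta^{2}$) and all Christoffel/Laplacian-comparison remainders (of order $\eta$, with constants depending only on $g,X$ via bounded curvature). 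On $X_{\zeta,0}$ one has $u_{X}^{-1}\in[\tfrac23,2]$, $|R_{\bar{g}}|_{W}u|\le(\sup_{X}|R_{g}|)\eta$, and $|\tfrac{\partial^{2}\hat{u}}{\partial t^{2}}|<\eta'$ by Lemma \ref{Pre:lemma5} and (\ref{Pre:eqn17}), where Lemma \ref{Pre:lemma5} in fact gives $\eta'\le\bar{D}\eta+\bar{D}'(C+1)\epsilon^{1/2}$ with $\bar{D},\bar{D}'$ independent of $C$. The proof closes by the usual ordering of constants: first pick $\epsilon_{0}$ small, independently of $C$, so that the $\bar{D}'(C+1)\epsilon^{1/2}$-contribution is at most $\tfrac16(C+1)$; then pick $C$ with $\tfrac12(C+1)>\Lambda_{0}+1$; then pick $\eta\ll1$ so that the residual $O(\eta)+O(\eta^{2})$ parts are $<1$; finally apply Lemmas \ref{Pre:lemma2} and \ref{Pre:lemma3} to obtain $\delta$ and $F$ with associated $\epsilon\le\epsilon_{0}$. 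Then the bracket is $\ge\tfrac23(C+1)-\tfrac16(C+1)-\Lambda_{0}-1>0$, which gives $R_{\imath_{\zeta}^{*}\imath_{\xi}^{*}\tilde{g}}>0$ on $X$. When $(d\pi_{\xi})_{*}\nu_{g}\equiv0$ the operator loses its $\nabla_{V_{1}}$-term, the hypothesis degenerates to (\ref{Riem:eqn1b}), and the statement is exactly \cite[Theorem 3.1]{XU11}.

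The step I expect to be the main obstacle is not the conformal algebra itself but keeping its error bookkeeping honest: one must verify that \emph{every} term produced in passing from the ambient operators $\nabla_{\nu_{g}}\nabla_{\nu_{g}}$, $\Delta_{g}$, $\Delta_{\imath_{\xi}^{*}g}$ on $M$ and $Y$ to the operators $\nabla_{V_{1}}\nabla_{V_{1}}$, $\nabla_{V_{2}}\nabla_{V_{2}}$, $\Delta_{\sigma^{*}\bar{g}}$ on the auxiliary closed manifold $W$ (Lemma \ref{Pre:lemma6}, and the analogous comparison of the second directional derivatives) is at most first order in $u$ with a coefficient bounded \emph{independently of $C$}, using the compactness of $X$ together with bounded curvature; this guarantees that the $\tfrac{\partial^{2}\hat{u}}{\partial t^{2}}$ term is the \emph{only} $C$-dependent error and that the $\epsilon$-versus-$C$ trade-off of Lemma \ref{Pre:lemma5} suffices to absorb it, so that the ``choose $C$ large first, then everything else small'' argument is non-circular. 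The angle condition (\ref{Riem:eqn1a}) enters only through the ellipticity of $L$ (Lemma \ref{Pre:lemma1}), which is what lets Lemma \ref{Pre:lemma3} produce the small solution $u$ in the first place.
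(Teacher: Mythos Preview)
Your proposal is correct and follows essentially the same route as the paper: build the conformal factor from the small solution $u$ of (\ref{Pre:eqn5}), plug $u_{M}^{4/(n-2)}$ into the iterated Gauss--Codazzi identity (\ref{intro:eqn3}), use the $\xi,\zeta$-constancy of $u_{M},u_{Y}$ to replace $\nabla_{\nu_{g}}\nabla_{\nu_{g}}$, $\nabla_{\nu_{\imath_{\xi}^{*}g}}\nabla_{\nu_{\imath_{\xi}^{*}g}}$ by $\nabla_{V_{1}}\nabla_{V_{1}}$, $\nabla_{V_{2}}\nabla_{V_{2}}$, compare Laplacians via Lemma \ref{Pre:lemma6}, and absorb everything except the large $F=C+1$ by the smallness estimates of Lemmas \ref{Pre:lemma3} and \ref{Pre:lemma5}. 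Your packaging of the first bracket as the single bounded quantity $R_{\imath_{\zeta}^{*}\imath_{\xi}^{*}g}$ (via (\ref{intro:eqn2})) is a slightly cleaner bookkeeping device than the paper's, which instead bounds the Ricci, mean-curvature and second-fundamental-form terms separately in choosing $C$; and your explicit ordering of $\epsilon_{0}$, $C$, $\eta$ makes the non-circularity of the $\epsilon$-versus-$C$ trade-off more transparent than the paper's ``take $\epsilon$ smaller if necessary,'' but the underlying mechanism is identical.
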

\begin{proof}
If $ d(\pi_{\xi})_{*}\nu_{g} \equiv 0 $ on $ X_{\xi, 0} $, Theorem 3.1 of \cite{XU11} applies with the angle condition (\ref{Riem:eqn1b}) and we are done. 

Assume that $ d(\pi_{\xi})_{*}\nu_{g} $ never vanishes on $ X_{\xi, 0} $. We set $ p, \alpha $ as in Lemma \ref{Pre:lemma3}. Choose $ C > 0 $ such that
\begin{equation}\label{Riem:eqn0a}
C > 2 \max_{X \times \lbrace 0 \rbrace_{\xi} \times \lbrace 0 \rbrace_{\zeta} \times \lbrace P \rbrace_{t}} \left( 2 \lvert {\rm Ric}_{g}(\nu_{g}, \nu_{g}) \rvert + 2 \lvert {\rm Ric}_{\imath_{\xi}^{*}g}(\nu_{\imath_{\xi}^{*}g}, \nu_{\imath_{\xi}^{*}g}) \rvert + h_{g}^{2} + \lvert A_{g} \rvert^{2} + h_{\imath_{\xi}^{*}g}^{2} + \lvert A_{\imath_{\xi}^{*}g} \rvert^{2} \right)  + 3.
\end{equation}
We choose $ \eta, \eta' \ll 1 $, and associated small enough $ \delta $, $ \epsilon $, and finally associated $ F $ in Lemma \ref{Pre:lemma2}, such that:

(i) the solution of (\ref{Pre:eqn5}) satisfies (\ref{Pre:eqn6}) and (\ref{Pre:eqn11}); 

(ii) $ 4\lvert A_{1}(g, M, u) \rvert < 1 $ on $ X \times \lbrace 0 \rbrace_{\xi} \times \lbrace 0 \rbrace_{\zeta} \times \lbrace P \rbrace_{t} $; 

(iii) The following two inequalities holds
\begin{equation}\label{Riem:eqn0b}
\begin{split}
   \frac{1}{2} < \lVert u_{W} \rVert_{\calC^{0}(X \times [-\frac{\epsilon}{2}, \frac{\epsilon}{2}]_{t})} & < \frac{3}{2}; \\
   \frac{4}{(n - 2)^{2}} \left\lvert  n(n- 1) \frac{\left\lvert \nabla_{V_{1}} u_{W} \right\rvert^{2}}{u_{W}} + n(n - 2)\frac{\left\lvert \nabla_{V_{2}} u_{W} \right\rvert^{2}}{u_{W}} +(n - 2) \frac{\left\lvert \nabla_{\imath_{\xi}^{2}g} u_{W} \right\rvert^{2}}{u_{W}}  \right\rvert & < 1 \; {\rm on} \; X \cong X_{\zeta, 0}.
\end{split}
\end{equation}
By (\ref{Riem:eqn0b}) and (\ref{Pre:eqn7}), it follows that 
\begin{equation*}
u_{M} \in \left[\frac{1}{2}, \frac{3}{2} \right] \; {\rm on} \; M.    
\end{equation*} 
With the conformal factor $ (u_{M})^{\frac{4}{n - 2}} $, we define
\begin{equation*}
    \tilde{g} = e^{2\phi_{M}} g = (u_{M})^{\frac{4}{n - 2}} g.
\end{equation*}
$ \tilde{g} $ is a complete metric since $ u_{M} $ is uniformly bounded above and below, and $ g $ is complete by assumption.

Apply the Gauss-Codazzi equations (\ref{intro:eqn3}) on $ X_{\zeta, 0} = X \times \lbrace 0 \rbrace_{\xi} \times \lbrace 0 \rbrace_{\zeta} $ with respect to the metric $ \tilde{g} $ in $ M $ and $ \imath_{\xi}^{*} \tilde{g} $ in $ Y = X \times \R_{\zeta} $, and note that $ \nu_{\tilde{g}} = e^{-\phi_{M}} \nu_{g} $, $ \nu_{\imath_{\xi}^{*} \tilde{g}} = e^{-\phi_{Y}} \nu_{\imath_{\xi}^{*}g} $, we have
\begin{equation}\label{Riem:eqn2}
\begin{split}
R_{\imath_{\zeta}^{*} \imath_{\xi}^{*}\tilde{g}} & = \left( R_{\tilde{g}} - 2\text{Ric}_{\imath_{\xi}^{*}\tilde{g}}(\nu_{\imath_{\xi}^{*}\tilde{g}}, \nu_{\imath_{\xi}^{*}\tilde{g}}) - 2\text{Ric}_{\tilde{g}}(\nu_{\tilde{g}}, \nu_{\tilde{g}}) + h_{\tilde{g}}^{2} + h_{\imath_{\xi}^{*}\tilde{g}}^{2} - \lvert A_{\tilde{g}} \rvert^{2} - \lvert A_{\imath_{\xi}^{*}\tilde{g}} \rvert^{2} \right) \bigg|_{X_{\zeta, 0}} \\
& = e^{-2\phi_{X}} \left( R_{g} - 2\text{Ric}_{\imath_{\xi}^{*}g}(\nu_{\imath_{\xi}^{*}g}, \nu_{\imath_{\xi}^{*}g}) - 2\text{Ric}_{g}(\nu_{g}, \nu_{g}) + h_{g}^{2} + h_{\imath_{\xi}^{*}g}^{2} - \lvert A_{g} \rvert^{2} - \lvert A_{\imath_{\xi}^{*}g} \rvert^{2} \right) \bigg|_{X_{\zeta, 0}} \\
& \qquad + e^{-2\phi_{X}} \left(2(n - 1)\nabla_{\nu_{g}} \nabla_{\nu_{g}} \phi_{M} + 2(n - 2) \nabla_{\nu_{\imath_{\xi}^{*}g}} \nabla_{\nu_{\imath_{\xi}^{*}g}} \phi_{Y} -2(n - 1) \Delta_{g} \phi_{M} + 2 \Delta_{\imath_{\xi}^{*}g} \phi_{Y} \right) \bigg|_{X_{\zeta, 0}} \\
& \qquad \qquad + e^{-2\phi_{X}} \left( - (n - 2)(n - 1) \lvert \nabla_{g} \phi_{M} \rvert_{g}^{2} + 2(n - 2) \lvert \nabla_{\imath_{\xi}^{*}g} \phi_{Y} \rvert_{\imath_{\xi}^{*}g}^{2} \right) \bigg|_{X_{\zeta, 0}} \\
& \qquad \qquad \qquad + e^{-2\phi_{X}} \left( - 2(n - 1) \left( \nabla_{\nu_{g}} \phi_{Y} \right)^{2} - 2(n - 2) \left( \nabla_{\nu_{\imath_{\xi}^{*}g}} \phi_{Y} \right)^{2} \right) \bigg|_{X_{\zeta, 0}}.
\end{split}
\end{equation}
Note that by definition, $ \phi_{M} $ is constant with both $ \xi $-variable and $ \zeta $-variable, and $ \phi_{Y} $ is constant with $ \zeta $-variable, it follows from the definition of $ V_{1}, V_{2} $ in (\ref{Pre:eqn0}) that 
\begin{equation*}
    \left( \nabla_{\nu_{g}} \nabla_{\nu_{g}} \phi_{M} \right) |_{X_{\xi, 0}} = \left( \nabla_{V_{1}} \nabla_{V_{1}} \phi_{M} \right) |_{X_{\xi, 0}}, \left( \nabla_{\nu_{\imath_{\xi}^{*}g}}\nabla_{\nu_{\imath_{\xi}^{*}g}} \phi_{Y} \right) |_{X_{\xi, 0}} = \left( \nabla_{V_{2}} \nabla_{V_{2}} \phi_{Y} \right) |_{X_{\xi, 0}}.
\end{equation*}
Clearly the same relation holds for first order derivatives. Applying the above equalities and (\ref{Riem:eqn0}), we rewrite (\ref{Riem:eqn2}) in terms of $ u_{M}, u_{Y} $ and $ u_{X} $,
\begin{equation}\label{Riem:eqn3}
\begin{split}
R_{\imath_{\zeta}^{*} \imath_{\xi}^{*}\tilde{g}} & = (u_{X})^{-\frac{n + 2}{n - 2}} \left( R_{g} u_{M} - 2\text{Ric}_{\imath_{\xi}^{*}g}(\nu_{\imath_{\xi}^{*}g}, \nu_{\imath_{\xi}^{*}g})u_{M} - 2\text{Ric}_{g}(\nu_{g}, \nu_{g})u_{M} \right)\bigg|_{X_{\zeta, 0}} \\
& \qquad + (u_{X})^{-\frac{n + 2}{n - 2}} \left( h_{g}^{2}u_{M} + h_{\imath_{\xi}^{*}g}^{2}u_{M} - \lvert A_{g} \rvert^{2}u_{M} - \lvert A_{\imath_{\xi}^{*}g} \rvert^{2}u_{M} \right) \bigg|_{X_{\zeta, 0}}  \\
& \qquad \qquad + (u_{X})^{-\frac{n + 2}{n - 2}} \left( \frac{4(n - 1)}{n - 2} \nabla_{V_{1}} \nabla_{V_{1}} u_{M} + 4 \nabla_{V_{2}} \nabla_{V_{2}} u_{Y} \right) \bigg|_{X_{\zeta, 0}} \\
& \qquad \qquad \qquad + (u_{X})^{-\frac{n + 2}{n - 2}} \left(- \frac{4(n - 1)}{n - 2} \Delta_{g} u_{M} + \frac{4}{n - 2} \Delta_{\imath_{\xi}^{*}g} u_{Y}\right) \bigg |_{X_{\zeta, 0}} \\
& \qquad \qquad \qquad \qquad + \frac{4}{(n - 2)^{2}}(u_{X})^{-\frac{n + 2}{n - 2}} \left(-n(n- 1) \frac{\left\lvert \nabla_{V_{1}} u_{M} \right\rvert^{2}}{u_{M}} - n(n - 2)\frac{\left\lvert \nabla_{V_{2}} u_{Y} \right\rvert^{2}}{u_{Y}} \right) \bigg|_{X_{\zeta, 0}} \\
& \qquad \qquad \qquad \qquad \qquad + \frac{4}{(n - 2)^{2}}(u_{X})^{-\frac{n + 2}{n - 2}} \left( (n - 2) \frac{\left\lvert \nabla_{\imath_{\xi}^{2}g} u_{Y} \right\rvert^{2}}{u_{Y}} \right) \bigg|_{X_{\zeta, 0}}.
\end{split}
\end{equation}
Recall that $ \bar{g} = g \oplus dt^{2} $, hence $ R_{\bar{g}} = R_{g} $. Using Laplacian comparison in (\ref{Pre:eqn18}) and $ u_{M} = u_{Y} = U_{W} = u_{X} $ in $ X_{\zeta, 0} $ by (\ref{Pre:eqn8}), (\ref{Riem:eqn3}) implies that
\begin{equation}\label{Riem:eqn4}
\begin{split}
R_{\imath_{\zeta}^{*} \imath_{\xi}^{*}\tilde{g}} & = (u_{X})^{-\frac{n + 2}{n - 2}} \left( R_{g} u_{W} - 2\text{Ric}_{\imath_{\xi}^{*}g}(\nu_{\imath_{\xi}^{*}g}, \nu_{\imath_{\xi}^{*}g})u_{W} - 2\text{Ric}_{g}(\nu_{g}, \nu_{g})u_{W} \right)\bigg|_{X_{\zeta, 0}} \\
& \qquad + (u_{X})^{-\frac{n + 2}{n - 2}} \left( h_{g}^{2}u_{W} + h_{\imath_{\xi}^{*}g}^{2}u_{W} - \lvert A_{g} \rvert^{2}u_{W} - \lvert A_{\imath_{\xi}^{*}g} \rvert^{2}u_{W} \right) \bigg|_{X_{\zeta, 0}}  \\
& \qquad \qquad + (u_{X})^{-\frac{n + 2}{n - 2}} \left( \frac{4(n - 1)}{n - 2} \nabla_{V_{1}} \nabla_{V_{1}} u_{W} + 4 \nabla_{V_{2}} \nabla_{V_{2}} u_{W} \right) \bigg|_{X_{\zeta, 0}} \\
& \qquad \qquad \qquad + (u_{X})^{-\frac{n + 2}{n - 2}} \left(- \frac{4(n - 1)}{n - 2} \Delta_{\bar{g}} \hat{u} + \frac{4}{n - 2} \Delta_{\bar{g}} \hat{u} - 4 \frac{\partial^{2} \hat{u}}{\partial t^{2}} \right) \bigg |_{X_{\zeta, 0}} \\
& \qquad \qquad \qquad \qquad + \frac{4}{(n - 2)^{2}}(u_{X})^{-\frac{n + 2}{n - 2}} \left(-n(n- 1) \frac{\left\lvert \nabla_{V_{1}} u_{W} \right\rvert^{2}}{u_{W}} - n(n - 2)\frac{\left\lvert \nabla_{V_{2}} u_{W} \right\rvert^{2}}{u_{W}} \right) \bigg|_{X_{\zeta, 0}} \\
& \qquad \qquad \qquad \qquad \qquad + \frac{4}{(n - 2)^{2}}(u_{X})^{-\frac{n + 2}{n - 2}} \left( (n - 2) \frac{\left\lvert \nabla_{\imath_{\xi}^{2}g} u_{W} \right\rvert^{2}}{u_{W}} \right) \bigg|_{X_{\zeta, 0}} \\
& = (u_{X})^{-\frac{n + 2}{n - 2}} \left(- 2\text{Ric}_{\imath_{\xi}^{*}g}(\nu_{\imath_{\xi}^{*}g}, \nu_{\imath_{\xi}^{*}g})u_{W} - 2\text{Ric}_{g}(\nu_{g}, \nu_{g})u_{W} \right)\bigg|_{X_{\zeta, 0}} \\
& \qquad + (u_{X})^{-\frac{n + 2}{n - 2}} \left( h_{g}^{2}u_{W} + h_{\imath_{\xi}^{*}g}^{2}u_{W} - \lvert A_{g} \rvert^{2}u_{W} - \lvert A_{\imath_{\xi}^{*}g} \rvert^{2}u_{W} \right) \bigg|_{X_{\zeta, 0}}  \\
& \qquad \qquad + (u_{X})^{-\frac{n + 2}{n - 2}} \left( \frac{4(n - 1)}{n - 2} \nabla_{V_{1}} \nabla_{V_{1}} u_{W} + 4 \nabla_{V_{2}} \nabla_{V_{2}} u_{W} - 4\Delta_{\sigma^{*} \bar{g}} u_{W} + R_{\bar{g}} |_{W} u_{W} \right) \bigg|_{X_{\zeta, 0}} \\
& \qquad \qquad \qquad + (u_{X})^{-\frac{n + 2}{n - 2}} \left(-4A_{1}(g, M, u) - 4 \frac{\partial^{2} \hat{u}}{\partial t^{2}} \right) \bigg |_{X_{\zeta, 0}} \\
& \qquad \qquad \qquad \qquad + \frac{4}{(n - 2)^{2}}(u_{X})^{-\frac{n + 2}{n - 2}} \left(-n(n- 1) \frac{\left\lvert \nabla_{V_{1}} u_{W} \right\rvert^{2}}{u_{W}} - n(n - 2)\frac{\left\lvert \nabla_{V_{2}} u_{W} \right\rvert^{2}}{u_{W}} \right) \bigg|_{X_{\zeta, 0}} \\
& \qquad \qquad \qquad \qquad \qquad + \frac{4}{(n - 2)^{2}}(u_{X})^{-\frac{n + 2}{n - 2}} \left( (n - 2) \frac{\left\lvert \nabla_{\imath_{\xi}^{2}g} u_{W} \right\rvert^{2}}{u_{W}} \right) \bigg|_{X_{\zeta, 0}}
\end{split}
\end{equation}
By the angle condition (\ref{Riem:eqn1a}), $ u $ satisfies the elliptic partial differential equation (\ref{Pre:eqn5}). Recall that $ u_{W} = u + 1 $, it follows that
\begin{equation}\label{Riem:eqn5}
\frac{4(n - 1)}{n - 2} \nabla_{V_{1}}\nabla_{V_{1}} u_{W} + 4\nabla_{V_{2}}\nabla_{V_{2}} u_{W} - 4\Delta_{\sigma^{*}\bar{g}} u_{W} + R_{\bar{g}} |_{W} u_{W} = F + R_{\bar{g}} |_{W} \; {\rm in} \; W.
\end{equation}
Apply the smallness of $ \frac{\partial^{2} \hat{u}}{\partial t^{2}} $ on $ X_{\zeta, 0} $ in (\ref{Pre:eqn17}), the uniform bounds of $ u_{W} $ and boundedness of first order terms of $ u_{W} $ in (\ref{Riem:eqn0b}), the boundness of curvature terms in (\ref{Riem:eqn0a}), the partial differential relation in (\ref{Riem:eqn5}), and the largeness $ F = C + 1 $ on $ X_{\zeta, 0} \cong X $ in Lemma \ref{Pre:lemma2}), we estimate $ R_{\imath_{\zeta}^{*}\imath_{\xi}^{*}\tilde{g}} $ in (\ref{Riem:eqn4}) by
\begin{align*}
R_{\imath_{\zeta}^{*}\imath_{\xi}^{*}\tilde{g}} & \geqslant -(u_{X})^{-\frac{n + 2}{n - 2}} \cdot 2 \max_{X \times \lbrace 0 \rbrace_{\xi} \times \lbrace 0 \rbrace_{\zeta} \times \lbrace P \rbrace_{t}} \left( 2 \lvert {\rm Ric}_{g}(\nu_{g}, \nu_{g}) \rvert + 2 \lvert {\rm Ric}_{\imath_{\xi}^{*}g}(\nu_{\imath_{\xi}^{*}g}, \nu_{\imath_{\xi}^{*}g}) \rvert \right) \\
& \qquad - (u_{X})^{-\frac{n + 2}{n - 2}} \cdot 2\max_{X \times \lbrace 0 \rbrace_{\xi} \times \lbrace 0 \rbrace_{\zeta} \times \lbrace P \rbrace_{t}} \left( h_{g}^{2} + \lvert A_{g} \rvert^{2} + h_{\imath_{\xi}^{*}g}^{2} + \lvert A_{\imath_{\xi}^{*}g} \rvert^{2} \right) \\
& \qquad \qquad - (u_{X})^{-\frac{n + 2}{n - 2}} \cdot \frac{4}{(n - 2)^{2}} \left\lvert  n(n- 1) \frac{\left( \nabla_{V_{1}} u_{W} \right)^{2}}{u_{W}} + n(n - 2)\frac{\left( \nabla_{V_{2}} u_{W} \right)^{2}}{u_{W}} \right\rvert \\
& \qquad \qquad \qquad - (u_{X})^{-\frac{n + 2}{n - 2}} \cdot \frac{4}{(n - 2)^{2}} \left\lvert (n - 2) \frac{\left\lvert \nabla_{\imath_{\xi}^{2}g} u_{W} \right\rvert^{2}}{u_{W}}  \right\rvert \\
& \qquad \qquad \qquad \qquad + (u_{X})^{-\frac{n + 2}{n - 2}} \left( F + R_{\bar{g}} - 2 \right) \bigg|_{X_{\zeta, 0}} \\
& > 0.
\end{align*}
As desired.
\end{proof}
Let $ X_{\xi_{0}, \zeta_{0}} : = X \times \lbrace \xi_{0} \rbrace_{\xi} \times \lbrace \zeta_{0} \rbrace_{\zeta} $. Theorem \ref{Riem:thm1} also holds if we impose the same angle condition (\ref{Riem:eqn1a}) or (\ref{Riem:eqn1b}) on $ X_{\xi_{0}, \zeta_{0}} $ with associated second fundamental forms, mean curvatures, and normal vector fields. Therefore we partially answer the 1994 Rosenberg-Stolz conjecture.
\begin{corollary}\label{Riem:cor1}
Let $ X $ be an oriented, closed manifold with $ n - 1 = \dim_{\R}X \geqslant 2 $. If $ X $ admits no Reimannian metric with positive Riemannian scalar curvature, then either the angle condition (\ref{Riem:eqn1a}) or (\ref{Riem:eqn1b}) fails on all codimension two hypersurfaces $ X_{\xi_{0}, \zeta_{0}} $, the Riemannian metric $ g $ on $ X \times \R^{2} $ fails to be of bounded curvature, or there is no complete Riemannian metric $ g $ on $ X \times \R^{2} $ that admits uniformly positive Riemannian scalar curvature.
\end{corollary}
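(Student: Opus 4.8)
The plan is to prove this corollary as the direct contrapositive of Theorem~\ref{Riem:thm1}, the only extra ingredient being the translation-invariance recorded in Remark~\ref{intro:re1}. First I would negate the stated trichotomy: suppose $X$ admits no metric of positive Riemannian scalar curvature, and yet there exists a complete Riemannian metric $g$ on $X\times\R^{2}$ that is of bounded curvature, satisfies $R_{g}\geqslant\kappa_{0}>0$ for some $\kappa_{0}>0$, and for which either \eqref{Riem:eqn1a} or \eqref{Riem:eqn1b} holds on some codimension-two hypersurface $X_{\xi_{0},\zeta_{0}}=X\times\{\xi_{0}\}_{\xi}\times\{\zeta_{0}\}_{\zeta}$. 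The goal is then to derive a contradiction.

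The key point is that the entire apparatus used in the proof of Theorem~\ref{Riem:thm1}---the auxiliary circle $\mathbb{S}^{1}_{t}$ and the diagram \eqref{intro:diagram}, the elliptic equation \eqref{Pre:eqn5} on $W=X\times\mathbb{S}^{1}_{t}$, the resulting conformal factor $u_{M}^{4/(n-2)}$ on $M=X\times\R^{2}$, and the iterated Gauss--Codazzi identity \eqref{Riem:eqn4}---depends on the distinguished point $(0,0)\in\R_{\xi}\times\R_{\zeta}$ only through that point. Since translation by $(\xi_{0},\zeta_{0})$ is an isometry of the flat $\R^{2}$-directions, and since completeness, bounded curvature, and the uniform lower bound $R_{g}\geqslant\kappa_{0}$ are all translation-invariant, the construction can be rerun verbatim with basepoint $(\xi_{0},\zeta_{0})$ in place of $(0,0)$. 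The only place where the choice of hypersurface enters the hypotheses is the pointwise ellipticity input of Lemma~\ref{Pre:lemma1}, which is exactly the angle condition \eqref{Riem:eqn1a} (or \eqref{Riem:eqn1b}), assumed to hold along $X_{\xi_{0},\zeta_{0}}$. Hence Theorem~\ref{Riem:thm1}, applied along $X_{\xi_{0},\zeta_{0}}$ with the associated second fundamental forms, mean curvatures, and unit normal fields, yields a metric $\tilde{g}\in[g]$ with $R_{\imath_{\zeta}^{*}\imath_{\xi}^{*}\tilde{g}}>0$ on $X_{\xi_{0},\zeta_{0}}$.

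Since $X_{\xi_{0},\zeta_{0}}$ is diffeomorphic to $X$ via the obvious projection, $\imath_{\zeta}^{*}\imath_{\xi}^{*}\tilde{g}$ transports to a Riemannian metric on $X$ of positive scalar curvature, contradicting the assumption that $X$ carries no such metric. Therefore the negation is impossible, and at least one of the three alternatives in the statement must hold, which is the desired conclusion.

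I do not expect a genuine obstacle, as all the analytic content is already contained in Theorem~\ref{Riem:thm1}. The one step that deserves a sentence of care is the passage from the distinguished hyperplane $X\times\{0\}_{\xi}\times\{0\}_{\zeta}$ to an arbitrary $X_{\xi_{0},\zeta_{0}}$; this is immediate from the invariance of the Euclidean factor under translation together with the fact that every hypothesis other than the angle condition is preserved by that translation, so re-centering the proof of Theorem~\ref{Riem:thm1} at $(\xi_{0},\zeta_{0})$ costs nothing.
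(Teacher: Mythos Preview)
Your proposal is correct and matches the paper's approach exactly: the paper's entire argument for the corollary is the single sentence preceding it, observing that Theorem~\ref{Riem:thm1} holds with $X_{\xi_{0},\zeta_{0}}$ in place of $X_{0,0}$, so the corollary is just the contrapositive. One small wording correction: translation by $(\xi_{0},\zeta_{0})$ is a diffeomorphism of $X\times\R^{2}$ but \emph{not} an isometry of $g$ (which is not assumed translation-invariant); the right justification is simply that the basepoint $(0,0)$ in the construction was arbitrary, or equivalently that one applies Theorem~\ref{Riem:thm1} to the pullback of $g$ under the translation diffeomorphism, which inherits completeness, bounded curvature, and the uniform scalar-curvature lower bound.
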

\medskip

\section{Hermitian Metric with Positive Chern Scalar Curvature on $ X \times \C $}
Let $ X $ be a closed, complex manifold with $ \dim_{\C} X \geqslant 1 $. Throughout this section, we fix an almost complex structure $ J_{0} $ on $ X $ which comes from the complex structure. We denote by $ S_{\omega} $ the Chern scalar curvature with respect to the Hermitian metric $ \omega $.

In this section, we show that the existence of Hermitian metric with positive Chern scalar curvature on noncompact manifolds of the type $ X \times \C $ by imposing some geometric conditions, provided that the complex manifold $ (X \times \C, J) $ admits a Hermitian metric $ \omega $ whose background Riemannian metric $ g $ has uniformly positive Riemannian scalar curvature. This is a generalization of X.K. Yang's result on closed Hermitian manifolds:
\begin{theorem}\label{Ch:thm1}\cite[Corollary 3.9]{Yang}
Let $ (X, J_{0}, \omega) $ be a closed Hermitian manifold such that the background Riemannian metric $ g $ has quasi-positive Riemannian scalar curvature, then there exists a Hermitian metric $ \tilde{\omega} $ such that $ S_{\tilde{\omega}} > 0 $.
\end{theorem}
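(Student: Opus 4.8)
The plan is to take $\tilde{\omega}$ in the conformal class of $\omega$, say $\tilde{\omega}=e^{2f}\omega$ --- which is automatically Hermitian with respect to $J_0$, since $e^{2f}g(J_0\cdot,J_0\cdot)=e^{2f}g(\cdot,\cdot)$ --- and to reduce the existence of $\tilde{\omega}$ to a scalar elliptic equation for $f$ on the closed manifold $X$. Write $m=\dim_{\C}X$ and let $\Delta^{Ch}_{\omega}f=\mathrm{tr}_{\omega}(i\partial\bar{\partial}f)$ denote the Chern Laplacian. The conformal transformation law for the Chern scalar curvature is $S_{e^{2f}\omega}=e^{-2f}\big(S_{\omega}-2m\,\Delta^{Ch}_{\omega}f\big)$; equivalently, with the positive conformal factor $\varphi:=e^{2f}$ one has $S_{\varphi\omega}=\varphi^{-2}\big(\mathcal{L}_{\omega}\varphi+m\,\varphi^{-1}|\partial\varphi|^{2}_{\omega}\big)$, where $\mathcal{L}_{\omega}\varphi:=-m\,\Delta^{Ch}_{\omega}\varphi+S_{\omega}\varphi$. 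Since the gradient term is non-negative, it suffices to produce a positive $\varphi$ with $\mathcal{L}_{\omega}\varphi>0$, i.e. an $f$ with $2m\,\Delta^{Ch}_{\omega}f<S_{\omega}$ pointwise.

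To make this solvable I would first normalize the metric. Conformal rescaling keeps $\omega$ Hermitian and transforms the Chern Laplacian by $\Delta^{Ch}_{\varphi\omega}=\varphi^{-1}\Delta^{Ch}_{\omega}$, so by Gauduchon's theorem I may assume, after a conformal change inside $[\omega]$, that $\omega$ is a Gauduchon metric, $\partial\bar{\partial}\,\omega^{m-1}=0$. For such a metric a double integration by parts gives $\int_{X}(\Delta^{Ch}_{\omega}u)\,\omega^{m}=0$ for every $u$, so $\Delta^{Ch}_{\omega}$ is index-zero elliptic with kernel the constants and image $\{\phi:\int_{X}\phi\,\omega^{m}=0\}$. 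Hence, once the total Chern scalar curvature $\mathcal{S}:=\int_{X}S_{\omega}\,\omega^{m}$ is known to be strictly positive, I can put $c:=\mathcal{S}/\big(2m\int_{X}\omega^{m}\big)>0$ and solve $2m\,\Delta^{Ch}_{\omega}f=S_{\omega}-2mc$ (the right-hand side integrating to zero); then $S_{e^{2f}\omega}=2mc\,e^{-2f}>0$. This is the Hermitian analogue of the Kazdan--Warner trick.

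It remains to deduce $\mathcal{S}>0$ from the hypothesis that the background metric is quasi-positively curved in the Riemannian sense. Here I would combine: (i) on a closed manifold, quasi-positivity of $R_{g}$ forces the first eigenvalue of the conformal (Yamabe) Laplacian of $g$ to be positive, a conformally invariant statement unaffected by the conformal changes relating the original metric to the Gauduchon one, so $[g]$ contains a Riemannian metric with $R>0$ everywhere; and (ii) the classical pointwise identity on a Hermitian manifold expressing $R_{g}$ in terms of $S_{\omega}$ and the intrinsic torsion (Lee form and torsion tensor) of $\omega$, which for a Gauduchon metric assembles the torsion into a divergence plus a sign-definite remainder and thus yields an integrated comparison between $\mathcal{S}$ and $\int_{X}R_{g}\,\omega^{m}$. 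Tracking a positive-scalar-curvature representative through this comparison should give $\mathcal{S}>0$.

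I expect step (ii), and the bookkeeping around it, to be the main obstacle: "quasi-positive $R_{g}$" is not conformally invariant, while the Gauduchon normalization and the total Chern scalar curvature sit in different slots of the conformal class, so one must be careful which representative the Riemannian--Chern curvature identity is applied to, and with which sign the torsion terms enter. A route that avoids the Gauduchon bookkeeping is to work directly with $\mathcal{L}_{\omega}=-m\,\Delta^{Ch}_{\omega}+S_{\omega}$: by the principal-eigenvalue theory of Berestycki--Nirenberg--Varadhan for non-self-adjoint elliptic operators on closed manifolds, $\mathcal{L}_{\omega}$ has a real principal eigenvalue $\lambda_{1}$ with a strictly positive eigenfunction $\varphi_{1}$, and $\lambda_{1}>0$ if and only if $\mathcal{L}_{\omega}$ obeys the maximum principle; if $\lambda_{1}(\mathcal{L}_{\omega})>0$ is certified from quasi-positivity of $R_{g}$ --- e.g. by exhibiting a positive strict supersolution of $\mathcal{L}_{\omega}$ manufactured from a Riemannian positive-scalar-curvature conformal factor --- then $\tilde{\omega}:=\varphi_{1}\omega$ has $S_{\tilde{\omega}}\ge\lambda_{1}/\varphi_{1}>0$. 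In either approach, the genuinely new feature compared with the Riemannian Yamabe problem is the non-self-adjointness of the Chern Laplacian, which is exactly why the step from "quasi-positive" to "positive" needs either the Gauduchon structure or the Krein--Rutman machinery rather than a one-line eigenfunction argument.
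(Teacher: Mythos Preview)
The paper does not prove this statement: Theorem~\ref{Ch:thm1} is quoted from \cite[Corollary~3.9]{Yang} and invoked as a black box in the proof of Theorem~\ref{Ch:thm2}. There is no argument in the present paper to compare your proposal against.

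Assessing your proposal on its own terms: the overall architecture --- pass to the Gauduchon representative in the conformal class, then solve $2m\,\Delta^{Ch}_\omega f = S_\omega - 2mc$ once the total Chern scalar curvature $\mathcal{S}=\int_X S_\omega\,\omega^m$ is shown to be positive --- is the standard one and is sound. But you yourself flag step~(ii), deducing $\mathcal{S}>0$ (equivalently, positivity of the Gauduchon degree) from quasi-positive $R_g$, as ``the main obstacle'', and you do not carry it out. You allude to a pointwise identity relating $R_g$ to $S_\omega$ plus torsion and Lee-form terms and say that tracking it ``should give $\mathcal{S}>0$''; you also offer the Krein--Rutman alternative but condition it on ``if $\lambda_1(\mathcal{L}_\omega)>0$ is certified from quasi-positivity of $R_g$'', again without supplying the certification. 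That bridge is precisely Yang's contribution, and in the identity you invoke the torsion and Lee-form contributions do not all carry the same sign, so the conclusion is not automatic from $R_g\geqslant 0$. As written, the proposal is a correct plan whose decisive lemma is left as a heuristic; to close it you would need to write down the explicit Riemannian--Chern scalar-curvature formula, check which terms survive integration against the Gauduchon volume and with what sign, and then conclude --- or else actually construct the positive strict supersolution of $\mathcal{L}_\omega$ that your second route requires.
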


\begin{theorem}\label{Ch:thm2}
Let $ X $ be a complex manifold with $ \dim_{\C}X \geqslant 1 $. Assume that $ (X \times \C, J, \omega) $ admits a complete Hermitian metric $ \omega $ whose background metric $ g $ is of bounded curvature, and such that $ R_{g} \geqslant \kappa_{0} > 0 $ for some $ \kappa_{0} > 0 $. Assume
\begin{equation}\label{Ch:eqn1a}
\begin{split}
 & \frac{n - 1}{n - 2} \sec^{2} (\angle_{\imath_{\xi}^{*}g}(d\pi_{\xi})_{*} \nu_{g}, \partial_{\zeta})) + \sec^{2} (\angle_{\imath_{\xi}^{*}g}(\nu_{\imath_{\xi}^{*}g}, \partial_{\zeta}))  \\
 & \qquad = \frac{n - 1}{n - 2} \frac{\imath_{\xi}^{*}g(\partial_{\zeta},\partial_{\zeta})}{\imath_{\xi}^{*}g((d\pi_{\xi})_{*}\nu_{g},\partial_{\zeta})^{2}} + \frac{\imath_{\xi}^{*}g(\partial_{\zeta},\partial_{\zeta})}{\imath_{\xi}^{*}g(\nu_{\imath_{\xi}^{*}g},\partial_{\zeta})^{2}}<
 2 + \frac{n - 1}{n - 2}
\end{split}
\end{equation}
on $ X \times \lbrace 0 \rbrace_{\xi} \times \lbrace 0 \rbrace_{\zeta} $ when $ (d\pi_{\xi})_{*}\nu_{g} $ is nowhere vanishing along $ X \times \lbrace 0 \rbrace_{\xi} \times \lbrace 0 \rbrace_{\zeta} $; otherwise assume
\begin{equation}\label{Ch:eqn1b}
\sec^{2} (\angle_{\imath_{\xi}^{*}g}(\nu_{\imath_{\xi}^{*}g}, \partial_{\zeta}))   = \frac{\imath_{\xi}^{*}g(\partial_{\zeta},\partial_{\zeta})}{\imath_{\xi}^{*}g(\nu_{\imath_{\xi}^{*}g},\partial_{\zeta})^{2}} < 2 
\end{equation}
when $ (d\pi_{\xi})_{*}\nu_{g} \equiv 0 $ on $ X \times \lbrace 0 \rbrace_{\xi} \times \lbrace 0 \rbrace_{\zeta} $. Then there exists a Hermitian metric $ \tilde{\omega} $ with positive Chern scalar curvature.
\end{theorem}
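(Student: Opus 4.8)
The plan is to run the three-step ``Riemannian path'' from \S1: push the positivity of the Riemannian scalar curvature from $X\times\C$ down to $X$ by Theorem~\ref{Riem:thm1}, note that the conformal change keeps us inside the Hermitian category, and then re-enter complex geometry through X.~K. Yang's compact theorem. \emph{Step 1.} Forgetting the complex structure momentarily, view $X\times\C\cong X\times\R^{2}$ as the canonically oriented closed manifold $X$ (with $n-1=\dim_{\R}X=2\dim_{\C}X\geqslant2$) times $\R^{2}$. The background metric $g$ of $\omega$ is, by hypothesis, complete, of bounded curvature, with $R_{g}\geqslant\kappa_{0}>0$, and satisfies (\ref{Ch:eqn1a}) (resp.\ (\ref{Ch:eqn1b})), which is verbatim the angle condition (\ref{Riem:eqn1a}) (resp.\ (\ref{Riem:eqn1b})). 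Hence Theorem~\ref{Riem:thm1} produces a metric $\tilde g=(u_{M})^{4/(n-2)}g=e^{2\phi_{M}}g$ in $[g]$ on $X\times\R^{2}$ with $R_{\imath_{\zeta}^{*}\imath_{\xi}^{*}\tilde g}>0$ on $X$. (When $(d\pi_{\xi})_{*}\nu_{g}\equiv0$ on $X\times\{0\}_{\xi}\times\{0\}_{\zeta}$, use instead the degenerate case of Theorem~\ref{Riem:thm1}, i.e.\ \cite[Theorem~3.1]{XU11} with the reduced condition (\ref{Ch:eqn1b}); everything below is unchanged.)

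\emph{Step 2.} Endow $X\times\C$ with the product complex structure $J=J_{0}\oplus J_{\mathrm{std}}$, where $J_{0}$ is the complex structure of $X$, so that $X\cong X\times\{0\}_{\xi}\times\{0\}_{\zeta}$ is a complex submanifold and $J$ restricts to $J_{0}$ on $TX$. Since $g$ is the associated Riemannian metric of the Hermitian metric $\omega$, we have $g(J\cdot,J\cdot)=g(\cdot,\cdot)$, and multiplying by the positive function $e^{2\phi_{M}}$ gives $\tilde g(J\cdot,J\cdot)=\tilde g(\cdot,\cdot)$; restricting to $TX$, one gets $g_{X}(J_{0}\cdot,J_{0}\cdot)=g_{X}(\cdot,\cdot)$ for $g_{X}:=\imath_{\zeta}^{*}\imath_{\xi}^{*}\tilde g$. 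Hence $\omega_{X}:=g_{X}(J_{0}\cdot,\cdot)$ is a Hermitian metric on $X$ whose associated Riemannian metric is exactly $g_{X}$, and $R_{g_{X}}>0$ on $X$ by Step 1.

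\emph{Step 3.} Since $X$ is closed, $(X,J_{0},\omega_{X})$ is a closed Hermitian manifold whose background metric has positive---hence quasi-positive---Riemannian scalar curvature, so Theorem~\ref{Ch:thm1} yields a Hermitian metric $\omega_{X}'$ on $X$ with $S_{\omega_{X}'}>0$. Equip $\C$ with the flat K\"ahler metric $\omega_{0}$, so $S_{\omega_{0}}\equiv0$, and set $\tilde\omega:=\omega_{X}'\oplus\omega_{0}$ on $X\times\C$. Because the Chern connection of a Hermitian product is the product of the Chern connections, the Chern scalar curvature is additive on products, whence $S_{\tilde\omega}=S_{\omega_{X}'}+S_{\omega_{0}}=S_{\omega_{X}'}>0$ on $X\times\C$ (and $\tilde\omega$ is complete as a product of complete metrics, though completeness is not required). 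This proves the theorem.

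\emph{Main obstacle.} All the genuine work is already inside Theorem~\ref{Riem:thm1}---the angle-driven ellipticity of the operator $L$, the $\calC^{1,\alpha}$-smallness of its solution, and the partial $\calC^{2}$-estimate for $\partial_{t}^{2}u$; the steps above amount to bookkeeping. The one point that deserves care is the compatibility in Step 2: one needs $X$ to sit in $X\times\C$ as a complex submanifold on which the ambient complex structure restricts to the given $J_{0}$---which is exactly why one works with the product complex structure---together with the trivial observation that conformally rescaling a Hermitian metric keeps it Hermitian; the additivity of the Chern scalar curvature under Hermitian products is classical. A minor caveat worth recording is that Yang's Theorem~\ref{Ch:thm1} requires $X$ compact, which is our standing hypothesis.
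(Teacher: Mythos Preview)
Your proof is correct and follows essentially the same three-step route as the paper: invoke Theorem~\ref{Riem:thm1} to obtain a conformal metric $\tilde g=e^{2\phi_M}g$ with $R_{\imath_\zeta^*\imath_\xi^*\tilde g}>0$ on $X$, observe that conformal rescaling and restriction to the complex submanifold $X$ preserve the Hermitian property (so the induced metric on $X$ is the background of a Hermitian metric), apply Yang's Theorem~\ref{Ch:thm1} on the closed $X$, and finish by taking the Hermitian product with the flat metric on $\C$. The only cosmetic difference is that the paper verifies $J$-compatibility by computing $\imath_\zeta^*\imath_\xi^*g(J_0\cdot,\cdot)=\imath_\zeta^*\imath_\xi^*\omega(\cdot,\cdot)$ directly, whereas you argue via $\tilde g(J\cdot,J\cdot)=\tilde g(\cdot,\cdot)$ and restrict; both amount to the same observation.
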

\begin{proof}
The fixed almost complex structure on $ X \times \C $ is $ J = J_{0} \oplus J_{1} $, where $ J_{1} $ is the complex structure of $ \C $. Since $ \omega $ is Hermitian, i.e. $ g(J\cdot, \cdot) = \omega(\cdot, \cdot) $, it follows that for $ V, W \in \Gamma(TX) $
\begin{align*}
\imath_{\zeta}^{*}\imath_{\xi}^{*}g(J_{0}V, W) & = g\left(d(\imath_{\xi} \circ \imath_{\zeta})_{*} J_{0}V, d(\imath_{\xi} \circ \imath_{\zeta})_{*} W \right) = g(Jd(\imath_{\xi} \circ \imath_{\zeta})_{*} V, d(\imath_{\xi} \circ \imath_{\zeta})_{*} W) \\
& = \omega(d(\imath_{\xi} \circ \imath_{\zeta})_{*} V, d(\imath_{\xi} \circ \imath_{\zeta})_{*} W) = \imath_{\zeta}^{*}\imath_{\xi}^{*}\omega(X, Y).
\end{align*}
It follows that the induced metric $ \imath_{\zeta}^{*}\imath_{\xi}^{*}g $ is the background Riemannian metric of the Hermitian metric $ \imath_{\zeta}^{*}\imath_{\xi}^{*}\omega $ on $ X $.

By Theorem \ref{Riem:thm1}, there exists a Riemannian metric $ \tilde{g} = e^{2\phi} \omega, \phi \in \calC^{\infty}(X \times \C) $, such that $ R_{\imath_{\zeta}^{*} \imath_{\xi}^{*} \tilde{g}} > 0 $. Clearly the conformal transformation preserves the Hermitian structure, i.e.
\begin{equation*}
    \tilde{g}(J\cdot, \cdot) = e^{2\phi}g(J\cdot, \cdot) = e^{2\phi}\omega(\cdot, \cdot) : = \tilde{\omega}(\cdot, \cdot).
\end{equation*}
It follows that $ \imath_{\zeta}^{*} \imath_{\xi}^{*} \tilde{g} $ is the background Riemannian metric of the Hermitian metric $ \imath_{\zeta}^{*} \imath_{\xi}^{*} \tilde{\omega} $. By Theorem \ref{Ch:thm1}, there exists a Hermitian metric $ \bar{\omega} $ on $ X $ such that $ S_{\bar{\omega}} > 0 $. The product Hermitian metric $ \hat{\omega} : = \bar{\omega} \oplus d\xi^{2} \oplus d\zeta^{2} $ satisfies $ S_{\hat{\omega}} > 0 $ since $ \C $ is a K\"ahler manifold with standard Euclidean metric.
\end{proof}
\medskip

\section{Generalization to $ X \times \R^{k} $ or $ X \times \C^{k} $}
In this section, we generalize our main results Theorem \ref{Riem:thm1} in Riemannian geometry and Theorem \ref{Ch:thm2} in complex geometry to the spaces $ X \times \R^{k} $ or $ X \times \C^{k} $ for any positive integer $ k $. We follow exactly the same Riemannian path, with the introduction of a generalized, conformally invariant angle condition.

For $ X \times \R^{2} $, we apply (\ref{intro:eqn3}) to get the desired positivity, i.e. by taking Gauss-Codazzi equation twice and construct a partial differential equation that is corresponding to the conformal transformation of the original metric. The angle condition is applied to obtain the ellipticity of the differential opertor $ L $ in the space $ X \times \mathbb{S}^{1} $.

Denote by the conformal metric $ \tilde{g} = e^{2\phi} g $ on $ X \times \R^{k} = X \times R_{1} \times \dotso \times \R_{k} $, and denote by $ \phi_{j - 1} = \imath_{j}^{*} \dotso \imath_{k}^{*} \phi \in \calC^{\infty}(X \times \R_{1} \times \dotso \times \R_{j - 1}), j = 2, \dotso, k $. Let $ A_{g_{j}}, h_{g_{j}} $ be the second fundamental forms and mean curvatures along $ X \times \R_{1} \times \dotso \times \R_{j - 1} $, respectively, $ j = 1, \dotso, k $. When $ k \geqslant 3 $, we apply Gauss-Codazzi $ k $ times for the metric $ \tilde{g} $, it follows that on $ X $,
\begin{equation}\label{Gen:eqn1}
\begin{split}
R_{\imath_{1}^{*} \imath_{k}^{*}\tilde{g}} 
& = e^{-2\phi_{1}} \left( R_{g} - 2\text{Ric}_{g_{1}}(\nu_{g_{1}}, \nu_{g_{ 1}}) - \dotso  - 2\text{Ric}_{g}(\nu_{g}, \nu_{g}) + h_{g}^{2} + \dotso h_{g_{1}}^{2} - \lVert A_{g} \rVert^{2} - \dotso - \lVert A_{g_{1}} \rVert^{2} \right) \\
& \qquad + e^{-2\phi_{1}} \left(2(n + k - 1)\nabla_{\nu_{g}} \nabla_{\nu_{g}} \phi + \dotso + 2(n - 2) \nabla_{g_{2}} \nabla_{g_{2}} \phi_{2} \right) \\
& \qquad \qquad  + e^{-2\phi_{1}}\left(-2(n + k - 1) \Delta_{g} \phi + 2\Delta_{g_{k - 1}} \phi_{k - 1} \dotso + 2 \Delta_{g_{1}} \phi_{1} \right) \\
& \qquad \qquad \qquad + e^{-2\phi_{1}} \left( - (n + k - 3)(n + k - 2) \lvert \nabla_{g} \phi \rvert_{g}^{2} \right) \\
& \qquad \qquad \qquad \qquad + e^{-2\phi_{1}} \left( 2(n + k - 4)  \lvert \nabla_{g_{k - 1}} \phi_{k - 1} \rvert_{g_{k - 1}}^{2} + \dotso + 2(n - 2) \lvert \nabla_{g_{2}} \phi_{2} \rvert_{g_{2}}^{2} \right) \\
& \qquad \qquad \qquad \qquad \qquad + e^{-2\phi_{1}} \left( - 2(n + k - 1) \left( \nabla_{\nu_{g}} \phi \right)^{2} - \dotso - 2(n - 2) \left( \nabla_{\nu_{g_{1}}} \phi_{1} \right)^{2} \right).
\end{split}
\end{equation}
When $ k = 2 $, (\ref{Gen:eqn1}) reduces to (\ref{intro:eqn3}); when $ k = 1 $, it further reduces to the case $ X \times \R $ and we refer to \cite{XU11}. By pairing the auxiliary space $ \mathbb{S}^{1} $, our generalization can be summerized as the following diagram:
\begin{equation}\label{Gen:diagram}
\begin{tikzcd} (X \times \R^{k}, g) \arrow[r, "\tau_{1}"] & ( X \times \R^{k} \times \mathbb{S}^{1}, \bar{g}) \arrow[d, shift left = 1.5ex, "\pi"] \\
(X, \imath_{1}^{*} \dotso \imath_{k}^{*}g) \arrow[r,"\tau_{2}"]\arrow[u,"\jmath"] &  (X \times \mathbb{S}^{1}, \sigma^{*}(\bar{g})) 
\arrow[u,"\sigma"]
\end{tikzcd}
\end{equation}
By assuming $ R_{g} \geqslant \kappa_{0} > 0 $ on $ X \times \R^{k} $, we introduce the Yamabe constant for $ X \times \R^{k} $. Due to the general Gauss-Codazzi equation (\ref{Gen:eqn1}) we only need to revise our partial differential equation
\begin{equation*}
    \frac{4(n + k - 3)}{n - 2} \nabla_{V_{1}}\nabla_{V_{1}} u + \dotso + 4 \nabla_{V_{k}}\nabla_{V_{k}} u - 4 \Delta_{\sigma^{*}\bar{g}} + R_{\bar{g}} |_{W} u = F \; {\rm in} \; W.
\end{equation*}
Here $ V_{j} $ is the projection of $ \mu_{k - j + 1} $ onto $ X $, $ j = 1, \dotso, k - 1 $, and $ V_{k} $ is the projection of $ \nu_{1} $ onto $ X $, which generalizes our choices of $ V_{1} $ and $ V_{2} $ for $ X \times \R^{2} $ case. Analogous to what we discussed in Remark \ref{Pre:re0}, we just remove the associated terms in the partial differential equation if some $ \mu_{i} \equiv 0 $ along $ X \times \R_{1} $, shall the angle condition (\ref{Gen:eqn2}) be also adjusted correspondingly as stated in the Theorem \ref{Gen:thm1} below.

Besides all necessary changes mentioned above and the generalized angle condition (\ref{Gen:eqn2}), we apply exactly the same argument as in case $ X \times \R^{2} $ to the general case $ X \times \R^{k} $, which is just notationally more difficult. It follows that the following general result is verified.
\begin{theorem}\label{Gen:thm1}
Let $ X $ be an oriented, closed manifold with $ \dim_{\R}X \geqslant 2 $. Assume that $ X \times \R^{n} $ admits a Riemannian metric $ g $ that is of bounded curvature, and such that $ R_{g} \geqslant \kappa_{0} > 0 $ for some $ \kappa_{0} > 0 $. If
\begin{equation}\label{Gen:eqn2}
\sum_{j = 2}^{n} A_{j} + \sec^{2}(\angle_{g_{1}}(\nu_{1}, \partial_{1})) < 2 + \sum_{j = 2}^{n} B_{j},
\end{equation}
along $ X \times \lbrace 0 \rbrace_{1} \times \dotso \times \lbrace 0 \rbrace_{k} $ (where, for $ j = 2, \dotso, n $, we set $ A_{j} \equiv 0 $ and remove associated $ B_{j} $ on the right hand side of (\ref{intro:eqn4}) whenever $ \mu_{j} \equiv 0 $ along $ X \times \R_{1} $), then there exists a Riemannian metric $ \tilde{g} $ in the conformal class of $ g $ such that $ R_{\imath_{\zeta}^{*} \imath_{\xi}^{*} \tilde{g}} > 0 $ on $ X $.
\end{theorem}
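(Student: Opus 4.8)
The plan is to run the Riemannian path of \S3--\S4 essentially verbatim, making only three bookkeeping changes: the twofold Gauss--Codazzi identity (\ref{intro:eqn3}) is replaced by its $k$-fold analogue (\ref{Gen:eqn1}); the pair $\{V_{1},V_{2}\}$ is replaced by the family $V_{1},\dots,V_{k}$, where $V_{j}$ is the $X$-component of $\mu_{k-j+1}$ for $j=1,\dots,k-1$ and $V_{k}$ is the $X$-component of $\nu_{1}$; and wherever the ambient dimension entered as $n+1$ it now enters as $n-1+k$ (the auxiliary circle $\mathbb{S}^{1}_{t}$ attached in (\ref{Gen:diagram}) keeps $\dim W=n$, so the conformal exponent $\tfrac{4}{n-2}$ is untouched). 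First I would, exactly as in (\ref{Pre:eqn0}), decompose $\mu_{i}=a_{i}\partial_{1}+\sum_{\ell}a_{i}^{\ell}\partial_{\ell}$ and $\nu_{1}=b\,\partial_{1}+\sum_{\ell}b^{\ell}\partial_{\ell}$ in local coordinates along $X\times\{0\}_{1}\times\dots\times\{0\}_{k}$, with $a_{i}=g_{1}(\partial_{1},\mu_{i})^{-1}$ (which vanishes precisely when $\mu_{i}\equiv0$) and $b=g_{1}(\partial_{1},\nu_{1})^{-1}\neq0$; set $V_{j}\in\Gamma(TX)$ to be the resulting tangential remainders, extended $t$-independently to $W=X\times\mathbb{S}^{1}$. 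Whenever some $\mu_{i}\equiv0$ the corresponding $V_{j}$ and its term are simply deleted and the angle condition (\ref{Gen:eqn2}) is read with that summand removed, exactly as in Remark \ref{Pre:re0} and Remark \ref{Pre:re1}.

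The one genuinely new estimate --- and the step I expect to be the main obstacle --- is the ellipticity on $W$ of
\[
 L' \;=\; \sum_{j=1}^{k}\frac{n+k-j-2}{n-2}\,\nabla_{V_{j}}\nabla_{V_{j}}\;-\;\Delta_{\sigma^{*}\bar{g}},
\]
the coefficients being those forced by (\ref{Gen:eqn1}) (note the $V_{k}$-coefficient is $\tfrac{n-2}{n-2}=1$, and the $V_{j}$-coefficient equals $B_{k-j+1}$ in the notation of (\ref{intro:eqn4a}) under the reindexing $\mu_{m}\leftrightarrow V_{k-m+1}$). As in Lemma \ref{Pre:lemma1}, in Riemannian normal coordinates at a fixed $x_{0}\in X$ the principal symbol of $L'$ is $|\xi|^{2}-\sum_{j}c_{j}(V_{j}\!\cdot\!\xi)^{2}$, so ellipticity is equivalent to positive definiteness of $I_{n-1}-\sum_{j=1}^{k}c_{j}V_{j}V_{j}^{\mathsf{T}}$. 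I would split $I_{n-1}=\sum_{j=1}^{k}C_{j}I_{n-1}$ with $C_{j}\in(0,1)$, $\sum_{j}C_{j}=1$, and treat each rank-one block $C_{j}I_{n-1}-c_{j}V_{j}V_{j}^{\mathsf{T}}$ separately: the Sylvester/eigenvalue computation of (\ref{Pre:eqn2})--(\ref{Pre:eqn3}) applies unchanged to each block, giving positive definiteness iff $c_{j}|V_{j}|^{2}<C_{j}$, and hence of the whole matrix, after optimizing the $C_{j}$, iff $\sum_{j}c_{j}|V_{j}|^{2}<1$. Then, exactly as in the display following (\ref{Pre:eqn4}), I would compute $|V_{j}|^{2}_{\sigma^{*}\bar{g}}=\sec^{2}\!\big(\angle_{g_{1}}(\mu_{k-j+1},\partial_{1})\big)-1$ for $j<k$ and $|V_{k}|^{2}=\sec^{2}\!\big(\angle_{g_{1}}(\nu_{1},\partial_{1})\big)-1$; using $A_{m}=B_{m}\sec^{2}(\angle_{g_{1}}(\mu_{m},\partial_{1}))$ and $c_{k-m+1}=B_{m}$, the inequality $\sum_{j}c_{j}|V_{j}|^{2}<1$ rearranges precisely into $\sum_{m=2}^{k}A_{m}+\sec^{2}(\angle_{g_{1}}(\nu_{1},\partial_{1}))<2+\sum_{m=2}^{k}B_{m}$, i.e. (\ref{Gen:eqn2}) (equivalently (\ref{intro:eqn4})). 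Compactness of $X$ then promotes this to uniform ellipticity of $L=\tfrac{4(n+k-3)}{n-2}\nabla_{V_{1}}\nabla_{V_{1}}+\dots+4\nabla_{V_{k}}\nabla_{V_{k}}-4\Delta_{\sigma^{*}\bar{g}}+R_{\bar{g}}|_{W}$.

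With uniform ellipticity in hand, every remaining step is a transcription of \S3 with the index $2$ replaced by $k$. I would produce the inhomogeneous term $F\geq0$ on $W$ --- equal to $C+1$ on a thin $t$-tube, supported in $X\times[-\epsilon,\epsilon]_{t}$, and $\calL^{p}$-small --- as in Lemma \ref{Pre:lemma2}; solve $Lu=F$ for a unique smooth $u$ with $\|u\|_{\calC^{1,\alpha}(W)}<\eta$ via Fredholm theory, $\calL^{p}$-regularity and Sobolev embedding as in Lemma \ref{Pre:lemma3}, the injectivity of $L$ coming from $R_{\bar{g}}|_{W}>0$; invoke positivity of the Yamabe constant of $(X\times\R^{k}\times\mathbb{S}^{1},\bar{g})$, which still follows from the bounded-curvature hypothesis together with $R_{g}\geq\kappa_{0}>0$ by Lemma \ref{Pre:lemma4}; and run the rescaling estimate of Lemma \ref{Pre:lemma5} --- now using $k$ cutoffs $\Phi_{1},\dots,\Phi_{k}$ in the $\R^{k}$ directions in place of $\Phi,\Phi'$, the point again being that $R_{\bar{g}}$, the $V_{j}$, and $\Delta_{\sigma^{*}\bar{g}}$ are $t$-independent --- to get $\|\partial_{t}^{2}u\|_{\calC^{0}}<\eta'\ll1$ on a smaller tube. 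Finally I would record the Laplacian comparisons of Lemma \ref{Pre:lemma6}, now relating $\Delta_{\bar{g}}\hat{u}$ to $\Delta_{g}u_{M}$, to the successive induced Laplacians $\Delta_{g_{j}}u_{Y_{j}}$, and to $\Delta_{\sigma^{*}\bar{g}}u_{W}$, modulo first-order terms.

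To conclude, I would set $\tilde{g}=u_{M}^{4/(n-2)}g$ (complete, since $u_{M}\in[\tfrac12,\tfrac32]$), substitute it into (\ref{Gen:eqn1}) restricted to $X$, convert all $\phi$-derivatives into $u$-derivatives by the identities (\ref{Riem:eqn0}), and collapse the leading combination of the $\nabla_{V_{j}}\nabla_{V_{j}}u$ and $\Delta_{\sigma^{*}\bar{g}}u$ terms into $Lu-R_{\bar{g}}|_{W}u_{W}=F$, exactly as in (\ref{Riem:eqn3})--(\ref{Riem:eqn4}). Choosing $C$ larger than twice the supremum over $X$ of the (uniformly bounded, by bounded curvature) Ricci, mean-curvature, $|A|^{2}$ and first-order data accumulated through the $k$ Gauss--Codazzi steps, together with $\eta,\eta'$ small, the $F=C+1$ contribution then dominates every remaining term, giving $R_{\imath_{1}^{*}\cdots\imath_{k}^{*}\tilde{g}}>0$ on $X$. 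The complex-geometry statement on $X\times\C^{k}$ (Theorem \ref{intro:thm4}) follows as in \S5: the conformal change $g\mapsto\tilde{g}$ preserves the Hermitian condition, so $\imath_{1}^{*}\cdots\imath_{k}^{*}\tilde{g}$ is the background metric of a Hermitian metric on $X$, and Theorem \ref{Ch:thm1} followed by the product with the flat K\"ahler $\C^{k}$ upgrades this to positive Chern scalar curvature. In short, the only new work is the $k$-fold rank-one matrix argument for ellipticity; the rest is the $X\times\R^{2}$ proof with heavier notation.
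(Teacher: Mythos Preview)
Your proposal is correct and follows exactly the approach the paper itself takes: the paper's own ``proof'' of Theorem \ref{Gen:thm1} is simply the remark that one runs the $X\times\R^{2}$ argument of \S3--\S4 verbatim with the $k$-fold Gauss--Codazzi identity (\ref{Gen:eqn1}), the enlarged family $V_{1},\dots,V_{k}$, and the modified PDE displayed just before the theorem, leaving all details (including the ellipticity computation) to the reader. Your write-up actually supplies those details --- in particular the $k$-block rank-one splitting $I_{n-1}=\sum_{j}C_{j}I_{n-1}$ generalizing Lemma \ref{Pre:lemma1}, and the verification that $\sum_{j}c_{j}|V_{j}|^{2}<1$ rearranges to (\ref{Gen:eqn2}) --- so if anything you have written more than the paper does.
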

Here $ A_{j}, B_{j} $ are defined in (\ref{intro:eqn4a}). Again the same conclusion follows if we choose a different hypersurface, which follows that the positivity on $ X $ is obtained if the angle condition is satisfied alongs some hypersurface identified as $ X $.

When $ n = 2m $ is an even number, and $ X $ is a complex manifold, we generalize the result of Theorem \ref{intro:thm2} on $ X \times \C^{m} $, according to the same Riemannian path.
\begin{corollary}\label{Gen:cor1}
Let $ X $ be a complex manifold with $ \dim_{\C}X \geqslant 1 $. Assume that $ (X \times \C^{n}, J, \omega) $ admits a complete Hermitian metric $ \omega $ whose background metric $ g $ satisfies the same hypotheses in Theorem \ref{Gen:thm1}, then there exists a Hermitian metric $ \tilde{\omega} $ with positive Chern scalar curvature on $ X \times \C^{n} $.
\end{corollary}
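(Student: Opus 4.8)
The plan is to run the Riemannian path of the proof of Theorem~\ref{Ch:thm2} essentially verbatim, now invoking the generalized statement Theorem~\ref{Gen:thm1} in place of Theorem~\ref{Riem:thm1}. Write the ambient manifold as $X\times\C^{m}$, identified topologically with $X\times\R^{k}$ where $k=2m$, and let $g$ be the background Riemannian metric of $\omega$, so that $g(J\cdot,\cdot)=\omega(\cdot,\cdot)$ with $J=J_{0}\oplus J_{1}$, $J_{0}$ the fixed complex structure of $X$ and $J_{1}$ the standard complex structure of $\C^{m}$. By hypothesis $g$ is of bounded curvature, $R_{g}\geqslant\kappa_{0}>0$, and the generalized angle condition \eqref{Gen:eqn2} holds along the distinguished copy $X\times\{0\}_{1}\times\dotso\times\{0\}_{k}$; these are precisely the hypotheses of Theorem~\ref{Gen:thm1}. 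That theorem therefore produces $\phi\in\calC^{\infty}(X\times\R^{k})$ such that the conformal metric $\tilde{g}:=e^{2\phi}g$ has $R_{\imath_{1}^{*}\dotso\imath_{k}^{*}\tilde{g}}>0$ on $X$.

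Next I would observe that the conformal change is compatible with $J$: from $g(J\cdot,J\cdot)=g(\cdot,\cdot)$ one gets $\tilde{g}(J\cdot,J\cdot)=\tilde{g}(\cdot,\cdot)$ after multiplying by $e^{2\phi}$, so $\tilde{\omega}:=\tilde{g}(J\cdot,\cdot)=e^{2\phi}\omega$ is again a Hermitian metric on $X\times\C^{m}$, with background Riemannian metric $\tilde{g}$. Since $X\times\{0\}\subset X\times\C^{m}$ is a complex submanifold, the inclusion $\jmath=\imath_{1}\circ\dotso\circ\imath_{k}$ is holomorphic and intertwines $J_{0}$ with $J$; exactly as in the proof of Theorem~\ref{Ch:thm2}, this forces $\imath_{1}^{*}\dotso\imath_{k}^{*}\tilde{g}$ to be the background Riemannian metric of the induced Hermitian metric $\imath_{1}^{*}\dotso\imath_{k}^{*}\tilde{\omega}$ on the closed complex manifold $X$, and by the previous paragraph it has positive Riemannian scalar curvature. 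Applying X.~K.~Yang's theorem (Theorem~\ref{Ch:thm1}) on this closed Hermitian manifold then yields a Hermitian metric $\bar{\omega}$ on $X$ with Chern scalar curvature $S_{\bar{\omega}}>0$.

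Finally I would take the product Hermitian metric $\hat{\omega}:=\bar{\omega}\oplus\omega_{\C^{m}}$, where $\omega_{\C^{m}}$ is the standard flat Kähler form on $\C^{m}$. Because $(\C^{m},\omega_{\C^{m}})$ is Kähler and flat, its Chern connection is the trivial connection and its Chern Ricci form vanishes; and for a product of Hermitian manifolds the Chern Ricci form splits as the direct sum of the two factors' contributions, so the Chern scalar curvature of $\hat{\omega}$ equals the pullback of $S_{\bar{\omega}}$ under the projection $X\times\C^{m}\to X$, which is strictly positive. Moreover $\hat{\omega}$ is complete, being the product of a metric on the closed manifold $X$ with the complete Euclidean metric on $\C^{m}$. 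This is the desired $\tilde{\omega}$.

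All the genuinely new content lives inside Theorem~\ref{Gen:thm1}, which we are assuming, so for the corollary itself there is essentially no obstacle; the one point still requiring a line of care is the last step, namely that flatness of $\C^{m}$ reduces the Chern scalar curvature of the product metric to $S_{\bar{\omega}}$. Within Theorem~\ref{Gen:thm1}, the heart of the matter --- and the step I would expect to be most delicate --- is verifying that \eqref{Gen:eqn2} is precisely the condition under which the $k$-fold operator $\frac{4(n+k-3)}{n-2}\nabla_{V_{1}}\nabla_{V_{1}}+\dotso+4\nabla_{V_{k}}\nabla_{V_{k}}-4\Delta_{\sigma^{*}\bar{g}}+R_{\bar{g}}|_{W}$ is uniformly elliptic on $W$: the positive-definiteness of the associated principal-symbol matrix should follow from the same rank-one eigenvalue/trace computation used in Lemma~\ref{Pre:lemma1}, after which Lemmas~\ref{Pre:lemma2} through~\ref{Pre:lemma6} and the estimate in the proof of Theorem~\ref{Riem:thm1} transcribe with only notational bookkeeping.
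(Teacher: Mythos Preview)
Your proposal is correct and follows exactly the approach the paper indicates: the paper does not spell out a proof of Corollary~\ref{Gen:cor1} but states it follows ``according to the same Riemannian path'' as Theorem~\ref{Ch:thm2}, which is precisely what you have done---apply Theorem~\ref{Gen:thm1} to obtain a conformal metric with positive Riemannian scalar curvature on $X$, note the conformal change preserves the Hermitian structure, invoke Theorem~\ref{Ch:thm1}, and then take the product with the flat K\"ahler metric on $\C^{m}$. Your additional remarks on where the work sits inside Theorem~\ref{Gen:thm1} are accurate but go beyond what the corollary itself requires.
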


\bibliographystyle{plain}
\bibliography{ScalarPre}
\end{document}